\newtheorem{theorem}{Theorem}[section]
\newtheorem{definition}[theorem]{Definition}
\newtheorem{lemma}[theorem]{Lemma}
\newtheorem{proposition}[theorem]{Proposition}
\theoremstyle{remark}
\newtheorem{remark}[theorem]{Remark}
\numberwithin{equation}{section}
\newcommand{\NN}{{\mathbb{N}}}
\newcommand{\ZZ}{{\mathbb{Z}}}
\newcommand{\RR}{{\mathbb{R}}}
\newcommand{\PP}{{\mathbb{P}}}
\newcommand{\QQ}{{\mathbb{Q}}}
\renewcommand{\PP}{{\mathbb{P}}}
\DeclareMathOperator{\vol}{vol}
\newcommand{\diag}{\mathrm{diag}}
\newcommand{\tp}{^\mathrm{t}}
\newcommand{\GL}{\mathrm{GL}}
\newcommand{\SL}{\mathrm{SL}}
\newcommand{\UL}{\mathrm{UL}}
\newcommand{\SO}{{\mathrm{SO}}}
\newcommand{\AUL}{\mathrm{AUL}}
\newcommand{\Id}{\mathrm{Id}}
\newcommand{\SH}{\mathsf{H}}
\newcommand{\sg}{\mathsf{g}}
\newcommand{\sh}{\mathsf{h}}
\newcommand{\sa}{\mathsf{a}}
\newcommand{\T}{{\mathsf{T}}}
\newcommand{\J}{\mathsf{J}}
\newcommand{\I}{\mathsf{I}}
\newcommand{\q}[2]{\mathsf q^{#1}_{#2}}
\newcommand{\vx}{\mathbf x}
\newcommand{\vy}{\mathbf y}
\newcommand{\vw}{\mathbf w}
\newcommand{\vv}{\mathbf v}
\providecommand{\ve}{\mathbf{ e}}
\providecommand{\vp}{\mathbf{p}}
\providecommand{\vk}{\mathbf{k}}
\providecommand{\vm}{\mathbf{m}}
\newcommand{\NT}{\mathbf N}
\newcommand{\Prim}[1]{\mathrm P({#1})}
\renewcommand{\varpi}{\pi}
\newcommand{\pp}{\mathtt p}
\newcommand{\origin}{O}
\newcommand{\ind}{\chi^{}}
\definecolor{cmd}{rgb}{1.0, 0.35, 0.21}
\begin{document}
\title[values of inhomogeneous forms]{Values of inhomogeneous forms at $S$-integral points}
\author{Anish Ghosh}

\address{\textbf{Anish Ghosh} \\
    School of Mathematics,
    Tata Institute of Fundamental Research, Mumbai, India 400005}
\email{ghosh@math.tifr.res.in}

\author{Jiyoung Han}

\address{\textbf{Jiyoung Han} \\
    School of Mathematics \\
Tata Institute of Fundamental Research, Mumbai, India 400005}
\email{jyhan@math.tifr.res.in}

\thanks{A.\ G.\ gratefully acknowledges support from a MATRICS grant from the Science and Engineering Research Board, a grant from the Infosys foundation and a Department of Science and Technology, Government of India, Swarnajayanti fellowship}

\maketitle
\begin{abstract}
We prove effective versions of Oppenheim's conjecture for generic inhomogeneous forms in the $S$-arithmetic setting. We prove an effective result for fixed rational shifts and generic forms and we also prove a result where both the quadratic form and the shift are allowed to vary. In order to do so, we prove analogues of Rogers' moment formulae for $S$-arithmetic congruence quotients as well as for the space of affine lattices. We believe the latter results to be of independent interest. 
\end{abstract}

\tableofcontents

 
\section{Introduction}
Let $q$ be a quadratic form and consider the \emph{inhomogeneous} quadratic form $q_{\xi}(\vv) := q(\vv + \xi)$ for $\vv \in \RR^n$. Here, we refer to $\xi \in \RR^n$ as a shift. We say that $q_{\xi}$ is \emph{indefinite} if $q$ is indefinite and \emph{non-degenerate} if $q$ is non-degenerate. Moreover $q_{\xi}$ is said to be \emph{irrational} if either $q$ is an irrational quadratic form, i.e. not proportional to a quadratic form with integer coefficients, or $\xi$ is an irrational vector. The purpose of this paper is to study values taken at integer points by inhomogeneous quadratic forms at $S$-integer points. 

A celebrated theorem of Margulis \cite{Mar} resolving an old conjecture of Oppenheim, states that for an indefinite irrational quadratic form $q$ in $n \geq 3$ variables, $q(\ZZ^n)$ is dense in $\RR$. Further fundamental work in this direction was carried out by Dani and Margulis \cite{DM} and by Eskin, Margulis and Mozes \cite{EMM, EMM2} who proved quantitative versions of Oppenheim's conjecture under suitable hypotheses.

Inhomogeneous forms arise in a variety of situations in mathematics and physics and were studied by Marklof in his fundamental work \cite{Mar2003, Mar2002} on pair correlation densities and their relation to the Berry Tabor conjecture.  In \cite{MaMo}, Margulis and Mohammadi proved a quantitative version of Oppenheim's conjecture for inhomogeneous forms. They showed that for any indefinite, irrational and non-degenerate inhomogeneous form $q_{\xi}$ in $n \geq 3$ variables there is $c_q>0$ such that 
$$\liminf_{t\to \infty}\frac{\NT(q_{\xi},I, t)}{t^{n-2}}\geq c_q |I|.$$
For $n\geq 5$ they showed that the above limit exists and equals $c_Q|I|$. Here $\NT(q_{\xi},I, t)$ is the counting function 
\begin{equation}\label{equ:countingfunction}
\NT(q_{\xi},I, t)=\#\{\vv\in\ZZ^n\ |\  q_{\xi}(\vv)\in I,\ \|\vv\|\leq t\},
\end{equation}
where $I\subseteq \RR$ is an interval and $\|\cdot\|$ is the Euclidean norm on $\RR^n$. 
 This result  is an inhomogeneous analogue of the results of Dani-Margulis and Eskin-Margulis-Mozes alluded to above. It constitutes a quantitative strengthening of, and therefore implies, an inhomogeneous analogue of the Oppenheim conjecture: namely that $q_{\xi}(\mathbb{Z}^{n})$ is dense in $\mathbb{R}$ for any indefinite, irrational, non-degenerate inhomogeneous form $q_{\xi}$ in $n \geq 3$ variables. We refer the reader to \cite{BG} for a self-contained proof of this (qualitative) density. In \cite{GKY2020, GKY2020b}, Ghosh, Kelmer and Yu considered an \emph{effective} version of the above inhomogeneous theorem for \emph{generic} forms. The term ``effective" is used in the literature to refer to two related problems: the issue of error terms in the counting problem above, as well as the question of obtaining a good bound for $\vv$ which ensures that $0 < q_{\xi}(\vv) < \varepsilon$. The term ``generic" refers to a full measure set of forms for an appropriate measure.
 
  As far as inhomogeneous forms are concerned, there are three natural regimes in which one could study this problem. To wit, one could allow both the form and the shift to vary, i.e. make the problem generic in two variables. Or alternatively, one could fix either the form or the shift and allow the other to vary. 
 In Theorem $1.1$ \cite{GKY2020}, Ghosh Kelmer and Yu noted that an effective result where both form and shift are allowed to vary, follows from an affine analogue of Rogers second moment formula for the space of affine lattices in conjunction with methods from an earlier paper \cite{KY2018} of Kelmer and Yu (which is discussed below). The case where either the form or the shift is fixed is significantly more difficult. One of the main results in \cite{GKY2020} is an effective theorem for fixed shifts and generic quadratic forms. In order to prove such a theorem, the authors prove an analogue of Rogers' second moment formula on congruence quotients of $\SL_{n}(\RR)$. The complementary case of  fixed forms and generic shifts is studied in \cite{GKY2020b} using a very different technique, namely effective mean ergodic theorems.

Our aim in this paper is to prove an $S$-arithmetic effective Oppenheim theorem for inhomogeneous forms. In Theorem \ref{inhomogeneous case} we allow both form and shift to vary, while in Theorem \ref{congruence case}, we treat the more difficult problem of fixing a rational shift and allowing the form to vary. We broadly follow the strategy of \cite{GKY2020} where the main tool is an analogue of Rogers' second moment formula for Siegel transforms on congruence quotients of $\SL_{n}(\RR)$. Accordingly, we prove two new analogues of Rogers' formula. In Theorem \ref{moment formulae: inhomogeneous} we establish an $S$-arithmetic analogue of Rogers' formula for the space of unimodular affine lattices. In Theorem \ref{moment theorems} which is the technical heart of the paper, we obtain an analogue of Rogers' formula for  congruence quotients. We believe these results, especially Theorem \ref{moment theorems} to be of independent interest. Indeed, the moment formula established in \cite{GKY2020} has already found other Diophantine applications, cf. \cite{AGY20}. Rogers' type results have found wide applicability in recent years.  

Effective results for Oppenheim type problems have recently received considerable attention. In \cite{LM14}, Lindenstrauss and Margulis proved a remarkable effective result for ternary quadratic forms. Their result is valid for all irrational quadratic forms satisfying an explicit Diophantine condition. If one settles for a generic set of forms instead, then better results can be obtained. In \cite{GGN}, Ghosh, Gorodnik and Nevo used effective mean ergodic theorems to prove a variety of results of this flavour including the case of ternary quadratic forms, namely the classical Oppenheim conjecture. The idea of using Rogers' mean value formula to study effective versions of Oppenheim's conjecture is due to Athreya and Margulis \cite{AM} and was further developed by Kelmer and Yu \cite{KY2018}. We also mention the work of Bourgain \cite{Bour} on certain `uniform' versions of Oppenheim's conjecture for  diagonal forms. See also \cite{GK} for an analogue for ternary forms and \cite{BGH, KS19}. 

The study of values of quadratic forms at integer points in the $S$-arithmetic setting was initiated in the work of Borel and Prasad \cite{BP} who proved analogues of Margulis's theorem. In \cite{HLM2017}, Han, Lim and Mallahi-Karai obtained $S$-arithmetic generalizations of the quantitative Oppenheim problem in rank $5$ and higher. The case of forms of rank $3$ and $4$ was studied by Han in \cite{Han2020}. Effective results for generic (homogeneous) forms in the $S$-arithmetic setting were obtained by Han in \cite{Han2021}, who also established an $S$-arithmetic version of Rogers' mean value formula. Finally, we mention that the question of values of \emph{rational} quadratic forms with congruence conditions has also received attention recently, see for instance \cite{BG}.

\section{Notation and Results}\label{statements}
\subsection{The Set-up}
Let $S$ be a finite set of places over $\QQ$ including the infinite place, with the usual corresponding Euclidean norm $\|\cdot\|_\infty$. Let us denote by $S_f=\{p_1, \ldots, p_s\}=S-\{\infty\}$, where each prime $p_j$ represents the $p_j$-adic norm on $\QQ$, the set of finite places in $S$. Define $\QQ_S=\prod_{p\in S} \QQ_p$, where $\QQ_p$ is the completion of $\QQ$ with respect to the norm $\|\cdot\|_p$. Here, $\QQ_p=\RR$ when $p=\infty$. Define \emph{the ring of $S$-integers} by
\[
\ZZ_S=\left\{(z,\ldots, z)\in \QQ_S : z\in \QQ \text{ with } |z|_v\le 1 \text{ for }\forall v\notin S_f\right\}.
\]

For $p=\infty$, let $\vol_\infty$ be the canonical Lebesgue measure on $\RR^d$, $d\ge 1$, and for each $p\in S_f$, we will consider the Haar measure $\vol_p$ on $\QQ_p^d$ normalized by $\vol_p(\ZZ_p^d)=1$. Let us define the measure on $\QQ_S^d$ as $\vol=\prod_{p\in S} \vol_p$. We also use the notation $\vol^{(d)}$ when we want to specify the dimension of the space. 

We say that $\Lambda \subseteq \QQ_S^d$ is an \emph{$S$-lattice} if $\Lambda$ is a free $\ZZ_S$-module of rank $d$ and an \emph{affine $S$-lattice} if it is of the form $\Lambda'+\xi$ for some $S$-lattice $\Lambda'$ and an element $\xi\in \QQ_S^d$. 
An $S$-lattice (affine $S$-lattice, respectively) is \emph{unimodular} if $\vol(\Lambda\setminus\QQ_S^d)=1$ ($\vol(\Lambda'\setminus \QQ_S^d)=1$, respectively).

For $p\in S_f$, define
\[
\UL_d(\QQ_p):=\left\{ g_p\in \GL_d(\QQ_p) : |\det g_p|_p=1\right\}
\]
and let $\UL_d(\RR):=\SL_d(\RR)$. Denote $\UL_d(\QQ_S)=\prod_{p\in S} \UL_d(\QQ_p)$ and let $\sg=(g_p)_{p\in S}$ be an element of $\UL_d(\QQ_S)$.
Consider the group 
$$\AUL_d(\QQ_S)=\{(\xi, \sg): \xi\in \QQ_S^d,\; \sg\in \UL_d(\QQ_S)\}$$ 
with the binary operation
\[
(\xi_1, \sg_1)(\xi_2, \sg_2)=(\xi_2+\sg_2\xi_1, \sg_1\sg_2).
\]

One can identify the space of unimodular affine $S$-lattices in $\QQ_S^d$ with ${\AUL_d(\ZZ_S) \setminus \AUL_d(\QQ_S)}$ using the correspondence 
\[\AUL_d(\ZZ_S)(\xi, \sg)\;\leftrightarrow\; \ZZ_S^d\sg+\xi.
\]


\subsection{Notational Remarks}
\begin{enumerate}
\item We will use sans serif typestyle font for parameters of an $S$-arithmetic space as we already use $\sg=(g_p)_{p\in S}$ for an element of an $S$-arithmetic Lie group. For $\T=(T_p)_{p\in S}$, an element of $\RR_{>0}\times \prod_{p\in S_f} p^{\ZZ}\;\left(\subseteq \RR_{>0}^{s+1}\right),$ we say that $\T=(T_p)_{p\in S} \succeq \T'=(T'_p)_{p\in S}$ when $T_p\ge T'_p$ for each $p\in S$, and $\T\rightarrow \infty$ if $T_p\rightarrow \infty$ for all $p\in S$.
\item For $S=\{\infty, p_1, \ldots, p_s\}$, define
\[\begin{split}
\NN_S&=\left\{q\in \NN : \gcd(q, p_1\cdots p_s)=1\right\}\;\text{and}\\
\PP_S&=\left\{p_1^{z_1}\cdots p_s^{z_s}\in \NN : z_j\in \NN\cup\{0\}\right\}.
\end{split}\]

We remark that there is a one-to-one correspondence between $\NN_S\times \PP_S$ and $\NN$.
If we denote by $\Prim{\ZZ_S^d}(=\ve_1.\SL_d(\ZZ_S))$ ($\Prim{\ZZ^d}$, respectively) the set of primitive elements of $\ZZ_S^d$ ($\ZZ^d$, respectively), then one can easily check that $\Prim{\ZZ_S^d}=\PP\cdot\Prim{\ZZ^d}$. 
\end{enumerate}

A quadratic form $\q{}{}$ on $\QQ_S^d$ is a collection $\q{}{}=(q^{(p)})_{p\in S}$ of quadratic forms $q_p$ over $\QQ_p$ for each $p\in S$.
We say that $\q{}{}$ is \emph{non-degenerate} (\emph{isotropic}, respectively) if $q^{(p)}$ is non-degenerate (isotropic, respectively) for all $p\in S$. Recall that a form is \emph{isotropic} if there exists a non-zero vector $\vv \in \QQ^{n}_p$ such that $q_p(\vv) = 0$ and that for the infinite place, this condition is equivalent to being indefinite.

We say that the collection $\{\I_\T : \T \}$ of Borel subsets of $\QQ_S$ is \emph{decreasing} if $\I_\T' \subseteq \I_\T$ whenever $\T \succeq \T'$. Our first main theorem is an effective counting statement for quadratic forms with congruence conditions in the $S$-arithmetic setting.

\begin{theorem}\label{congruence case}
Let $d\ge 3$. Let $\mathcal I=\{\I_\T=(I^{(p)}_{T_p})_{p\in S} : \T \}$ be a decreasing family of bounded Borel sets in $\QQ_S$ satisfying
\[
\vol_\infty(I^{(\infty)}_{T_\infty})=c_\infty T_\infty^{-\kappa_\infty}
\quad\text{and}\quad
I^{(p)}_{T_p}=a_p+ p^{c_p+\kappa_pt_p}\ZZ_p\;(p\in S_f)
\]
for some $(c_p)_{p\in S_f}\in \RR_{>0}\times \prod_{p\in S_f} \ZZ$, $a_p\in \QQ_p$ ($p\in S_f$) and
\begin{equation}\label{restriction from vol formula}
0\le \kappa_\infty<d-2
\quad\text{and}\quad
\left\{\begin{array}{lc}
\kappa_p\in \{0,1\},& \text{if } d\ge 4;\\
\kappa_p=0,& \text{if } d=3.\end{array}\right.
\end{equation}
Let $q\in \NN_S$ and $\vp \in \ZZ_S^d$. For a quadratic form $\q{}{}$ on $\QQ_S^d$ and $\T$, define
\[
\NT(q, \vp\: ;\q{}{}, \mathcal I, \T)
=\#\left\{\vk\in q\ZZ_S^d+\vp :  
\|\vk\|_p < T_p\;(p\in S)\;\text{and}\; \q{}{}(\vk)\in \I_\T\right\}.
\]

There is $\delta_0>0$ such that for any $\delta\in (0, \delta_0)$, we have
\[
\NT(q, \vp\: ;\q{}{}, \mathcal I,\T)
=c_{\q{}{}}\frac{1}{q^d}\vol(\I_\T) |\T|^{d-2}
+o(\vol(\I_\T)|\T|^{d-2-\delta})
\]
for almost all non-degenerate isotropic quadratic forms $\q{}{}$, where $|\T|=\prod_{p\in S} T_p$.
Here, the implied constant of the error term is uniform on a compact set of the space of non-degenerate isotropic quadratic forms, and depends on $q$.
\end{theorem}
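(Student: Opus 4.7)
The plan is to adapt the strategy of \cite{GKY2020} to the $S$-arithmetic congruence setting: realize the counting function $\NT(q,\vp\: ;\q{}{},\mathcal I,\T)$ as an $S$-arithmetic Siegel transform on the congruence quotient $\Gamma(q)\backslash \UL_d(\QQ_S)$, and invoke Theorem \ref{moment theorems} to compute its first and second moments.

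First I would locally parameterize a non-degenerate isotropic form as $\q{}{}=q_0\circ\sg$, where $q_0$ is a fixed standard form of the same type and $\sg$ ranges over a transversal of $\SO(q_0)(\QQ_S)$ in $\UL_d(\QQ_S)$. After the substitution $\vv=\sg\vk$ and a rescaling by a diagonal element $\sg_\T\in\SO(q_0)(\QQ_S)$ that normalizes the ball $\{\|\cdot\|_p<T_p\}$ to a fixed reference set, the count takes the form
\[
\NT(q,\vp\:;\q{}{},\mathcal I,\T)=\sum_{\vv\in \sg_\T\sg(q\ZZ_S^d+\vp)}\one_{\widetilde{\Omega}_\T}(\vv),
\]
where $\widetilde{\Omega}_\T$ is a rescaled set of volume $\asymp\vol(\I_\T)|\T|^{d-2}$. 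Feeding this into Theorem \ref{moment theorems}, the first moment yields the claimed main term $c_{\q{}{}}q^{-d}\vol(\I_\T)|\T|^{d-2}$: the factor $q^{-d}$ is the reciprocal covolume of the affine sublattice $q\ZZ_S^d+\vp$, and $c_{\q{}{}}$ is the $\SO(q_0)$-invariant density on the level set $q_0^{-1}(\I_\T)$ obtained via the coarea formula at each place. The hypotheses \eqref{restriction from vol formula} are exactly the conditions that keep this density integrable and the main term genuinely growing in $\T$. The second-moment formula then furnishes a variance bound of the shape $O_q\bigl(\vol(\I_\T)|\T|^{d-2}\bigr)$ plus lower-order terms, and Chebyshev's inequality controls the failure probability in $\sg$.

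To upgrade mean-concentration to an almost-everywhere statement, I would discretize $\T$ along a geometric sequence in $\RR_{>0}\times\prod_{p\in S_f}p^{\ZZ}$, apply Borel--Cantelli to the summable failure probabilities (with the exponent $\delta_0>0$ dictated by the variance growth), and interpolate between consecutive $\T_n$'s using the monotonicity of $\{\I_\T\}$ and of the norm balls in $\T$; uniformity on compacta of forms comes from the continuous dependence of the moment bounds on $\sg$. The main obstacle is the second-moment estimate: since the support $\widetilde{\Omega}_\T$ is a thin slab around the level set $q_0^{-1}(\I_\T)$, one has to extract from Theorem \ref{moment theorems} a variance bound in which the diagonal contribution dominates and the off-diagonal contributions, coming from pairs of points lying on proper rank-one submodules of $q\ZZ_S^d+\vp$ at both the Archimedean and the non-Archimedean places, stay strictly smaller than the main term. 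The thresholds $\kappa_\infty<d-2$ and $\kappa_p\in\{0,1\}$ encode precisely the range in which the relevant second-moment integrals converge and the variance is comparable to (rather than exceeding) the main term; matching this analytic size to the combinatorics produced by the congruence moment formula, with polynomial-in-$q$ control uniform on compacta of forms, is the technical heart of the argument.
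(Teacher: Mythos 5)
Your proposal is substantially correct and follows the same overall strategy as the paper: pass to the dual picture (fix a reference form $q_0$ and let $\sg$ vary), identify the count with a Siegel transform on the congruence quotient, compute first and second moments via Theorem~\ref{moment theorems}, derive a variance bound (Theorem~\ref{variance property}) by Chebyshev, and run a Borel--Cantelli argument with a discretization of $\T$ combined with a covering of a compact set of forms by a net of shrinking size $\varepsilon_1$. Your heuristic that $q^{-d}$ is the reciprocal covolume of $q\ZZ_S^d+\vp$ is exactly how the paper produces this factor: the substitution $\vk\mapsto\vk/q$ in \eqref{relation} reduces Theorem~\ref{congruence case} to Theorem~\ref{reduced congruence case} (counting on $\ZZ_S^d+\vp/q$), rescaling $|\T|$ by $1/q$ and $\vol(\I_\T)$ by $1/q^2$, whence $q^{-(d-2)}\cdot q^{-2}=q^{-d}$.

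Two minor points where your attribution of the hypotheses is off. First, the restriction $\kappa_p\in\{0,1\}$ (and $\kappa_p=0$ when $d=3$) at the finite places is \emph{not} forced by second-moment convergence: the variance bound from Theorem~\ref{moment theorems}(b) is the uniform $C_d\,\vol(A)$ of Theorem~\ref{variance property}, obtained simply by relaxing the congruence constraint on the index set and invoking the bound from \cite{Han2021}. Rather, the $\kappa_p$ restrictions at finite places come from the $p$-adic volume formula (Theorem~\ref{volume formula}), which requires the congruence-ball structure $I^{(p)}_{T_p}=a_p+p^{c_p+\kappa_p t_p}\ZZ_p$ and the conditions \eqref{local property}; the paper's own remark after Theorem~\ref{congruence case} flags exactly this asymmetry between the real and $p$-adic volume estimates. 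Second, the paper does not introduce a diagonal renormalization $\sg_\T\in\SO(q_0)(\QQ_S)$; instead it works directly with the sets $A_{\q{}{},\I,\T}=\q{-1}{}(\I)\cap B_\T$ and compares their discrepancies via Lemma~\ref{discrepancy} and Theorem~\ref{volume formula}. Your $\sg_\T$ device could be made to work, but it is an unnecessary extra layer; and if you do keep it, you would still need both the space-approximation (covering $\mathcal K$ by $\#\mathcal Q\ll\varepsilon_1^{-\frac12(d+2)(d-1)}$ balls) and the radius-approximation (step $\varepsilon_2$ in $T_\infty$) of the paper's proof, since the variance bound alone is applied only at the discretized points and is interpolated via monotonicity through Lemma~\ref{discrepancy}. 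You gesture at this with ``uniformity on compacta'' but leave it implicit; it is precisely the source of the exponent $\frac12(d+2)(d-1)$ in the Borel--Cantelli summability conditions \eqref{B-C condition} that determines $\delta_0$.
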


\begin{remark}
We remark that the reason that the condition for the finite place in \eqref{restriction from vol formula} is more restrictive than that of the infinite place is because of the difference in method used for the volume estimation for given regions in $\RR^d$ and $\QQ_p^d$ (see Theorem~\ref{volume formula}).
\end{remark}

\begin{remark}
We note that if $q=1$, then Theorem~\ref{congruence case} follows from \cite{Han2021}. Hence throughout this paper, let us assume that $1\neq q\in \NN_S$.
\end{remark}

Observe that putting $\vk_1=\frac 1 q \vk$, we have
\begin{equation}\label{relation}\begin{split}
&\NT(q,\vp;\q{}{}, \mathcal I, \T)\\
&\hspace{0.2in}=\#\left\{\vk_1\in \ZZ_S^d+\frac {\vp} q : \begin{array}{c}
\|\vk_1\|\infty< \frac {T_\infty} q; \\
\|\vk_1\|_p < T_p\;(p\in S_f)\end{array}\;\text{and}\;
\q{}{}(\vk_1) \in \frac 1 {q^2} \I_\T\;\right\}.
\end{split}\end{equation}

 As observed in \cite{GKY2020}, values of inhomogeneous forms with rational shifts at integer points are simply scaled values of the homogeneous form at integer points satisfying a congruence condition. 
 
 Our next result is an effective counting result for inhomogeneous forms in the $S$-arithmetic setting where both the form and the shift are allowed to vary.
\begin{theorem}\label{inhomogeneous case}
Let $d\ge 3$ and let $\mathcal I=\{\I_\T\}$ be as Theorem~\ref{congruence case}.
For a quadratic form $\q{}{}$ and $\xi\in \QQ_S^d$, define 
\[
\NT(\q{}{\xi}, \mathcal I, \T)= \#\left\{
\vk \in \ZZ_S^d : \|\vk\|_p < T_p\;(p\in S) \;\text{and}\; \q{}{\xi}(\vk)\in \I_\T\right\}.
\]

There is $\delta_0>0$ such that for any $\delta\in (0,\delta_0)$,
\[
\NT(\q{}{\xi},\mathcal I, \T)=c_{\q{}{}}\vol(\I_\T) |\T|^{d-2} + o_{\q{}{\xi}}(\vol(\I_\T)|\T|^{d-2-\delta})
\]
for almost all pairs $(\q{}{}, \xi)$ of a non-degenerate isotropic quadratic form and an element of $\QQ_S^d$. 
\end{theorem}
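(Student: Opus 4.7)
The plan is to realise $\NT(\q{}{\xi},\mathcal I,\T)$ as a Siegel transform on the space $Y=\AUL_d(\ZZ_S)\backslash\AUL_d(\QQ_S)$ of unimodular affine $S$-lattices, and then combine the $S$-arithmetic affine Rogers mean-value formula (Theorem \ref{moment formulae: inhomogeneous}) with a variance--Chebyshev argument along a dyadic multi-scale, in the spirit of Kelmer--Yu \cite{KY2018}. Fix a base non-degenerate isotropic form $\q{0}{}$ on $\QQ_S^d$ and write any nearby non-degenerate isotropic form as $\q{}{}=\q{0}{}\circ \sg^{-1}$ for $\sg$ ranging over a relatively compact transversal in $\UL_d(\QQ_S)$. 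A change of variables recasts
\[
\NT(\q{}{\xi},\I_\T,\T)\;=\;\sum_{\vv\in\Lambda_{\sg,\xi}} \mathbf{1}_{\sg B_\T}\!\bigl(\vv-\sg^{-1}\xi\bigr)\,\mathbf{1}_{\I_\T}\!\bigl(\q{0}{}(\vv)\bigr),
\]
where $\Lambda_{\sg,\xi}=\sg^{-1}\ZZ_S^d+\sg^{-1}\xi$ is an affine $S$-lattice and $B_\T=\prod_p\{\|\cdot\|_p<T_p\}$. Uniformly over compact sets of $\sg$ one may replace $\sg B_\T$ by $B_\T$ at the cost of a bounded distortion, reducing matters to the Siegel transform $\widehat{f_\T}$ of $f_\T:=\mathbf{1}_{B_\T\cap \q{0}{}^{-1}(\I_\T)}$ evaluated at a random affine lattice.

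The first-moment clause of Theorem \ref{moment formulae: inhomogeneous}, together with the $S$-arithmetic volume estimate (the analogue of Theorem \ref{volume formula} cited in Theorem \ref{congruence case}), yields
\[
\int_Y \widehat{f_\T}\,d\mu \;=\; \int_{\QQ_S^d} f_\T\,d\vol \;=\; \vol\!\bigl(B_\T\cap \q{0}{}^{-1}(\I_\T)\bigr) \;=\; c_{\q{0}{}}\,\vol(\I_\T)\,|\T|^{d-2}\bigl(1+o(1)\bigr),
\]
where the hypothesis \eqref{restriction from vol formula} is exactly what is needed for this asymptotic. In contrast to the homogeneous Rogers formula, the affine second-moment formula has no primitive-vector correction: one has $\int_Y \widehat{f_\T}^{\,2}\,d\mu = \bigl(\int f_\T\bigr)^2+\int f_\T^{\,2}$, and since $f_\T$ is $\{0,1\}$-valued this gives the clean variance bound
\[
\mathrm{Var}_Y(\widehat{f_\T}) \;=\; \int f_\T^{\,2} \;=\; \int f_\T \;=\; O\!\bigl(\vol(\I_\T)\,|\T|^{d-2}\bigr).
\]

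Chebyshev's inequality along the dyadic multi-index $\T_\vn=(2^{n_\infty};p_1^{n_1},\ldots,p_s^{n_s})$ then gives
\[
\mu\!\left\{\Lambda\in Y:\bigl|\widehat{f_{\T_\vn}}(\Lambda)-\EE \widehat{f_{\T_\vn}}\bigr|> |\T_\vn|^{-\delta}\,\EE \widehat{f_{\T_\vn}}\right\} \;\ll\; \frac{1}{\vol(\I_{\T_\vn})\,|\T_\vn|^{d-2-2\delta}}.
\]
Under \eqref{restriction from vol formula} each of the exponents $d-2-2\delta-\kappa_\infty$ and $d-2-2\delta-\kappa_p$ is strictly positive for small $\delta>0$, so the right-hand side is summable over $\vn\in\NN^{s+1}$. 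Borel--Cantelli produces a full-measure subset of $Y$ on which the asymptotic holds along dyadic $\T_\vn$; a standard monotonicity sandwich in $\T$ upgrades this to all $\T\to\infty$. Pulling the full-measure set on $Y$ back through the parametrisation $(\sg,\xi)\mapsto \Lambda_{\sg,\xi}$ and covering the space of pairs $(\q{}{},\xi)$ by countably many such transversals converts the conclusion into the almost-everywhere statement claimed in the theorem.

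The main obstacle is not the moment computation itself, which is supplied by Theorem \ref{moment formulae: inhomogeneous}, but the multi-parameter exhaustion in the $S$-arithmetic setting: because $\T\in\RR_{>0}\times\prod_{p\in S_f}p^\ZZ$ is genuinely multidimensional, a single Borel--Cantelli must handle a full lattice of dyadic scales, and a careful truncation/sandwich is needed to cope with the non-compact support of $f_\T$ on $Y$. The condition \eqref{restriction from vol formula} is precisely sharp for this approach, since it is what makes the variance bound summable against the polynomial gain $|\T|^{-2\delta}$ across the multi-dyadic scale.
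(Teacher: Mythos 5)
Your overall strategy is the same as the paper's: realise the count as a Siegel transform on the space $\AUL_d(\ZZ_S)\setminus\AUL_d(\QQ_S)$, invoke Theorem \ref{moment formulae: inhomogeneous} together with Chebyshev's inequality (this is exactly Theorem \ref{variance property: inhomogeneous}), and conclude via Borel--Cantelli. However, two places where you write ``at the cost of a bounded distortion'' and ``a standard monotonicity sandwich'' hide the actual work, and as stated they do not produce the claimed power-saving error term.

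First, the sandwich along dyadic scales $\T_\vn=(2^{n_\infty};p_1^{n_1},\ldots,p_s^{n_s})$ is too coarse. If $\T_\vn\preceq\T\preceq\T_{\vn+1}$, monotonicity only gives
\[
\NT(\q{}{\xi},\I_{\T_{\vn+1}},\T_\vn)\le \NT(\q{}{\xi},\I_\T,\T)\le \NT(\q{}{\xi},\I_{\T_\vn},\T_{\vn+1}),
\]
and the main terms at the two endpoints differ by a fixed multiplicative constant of order $2^{d-2}$ at the infinite place. This yields $\NT(\q{}{\xi},\I_\T,\T)=\Theta\bigl(\vol(\I_\T)|\T|^{d-2}\bigr)$, not the asserted $c_{\q{}{}}\vol(\I_\T)|\T|^{d-2}+o(\vol(\I_\T)|\T|^{d-2-\delta})$. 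The same defect appears when you replace $\sg B_\T$ by $B_\T$ ``at the cost of a bounded distortion'': a distortion bounded away from $1$ changes the main term by a constant factor, which is fatal. The paper avoids both problems with a \emph{two-scale} approximation: it uses an integer (not dyadic) grid in $T_\infty$, covers the compact piece of the parameter space by balls of radius $\varepsilon_1(\J)=j_\infty^{-\alpha_\infty}\prod_p p^{-\alpha_p j_p}$, and discretizes the radius inside each unit interval with step $\varepsilon_2(\J)=j_\infty^{-\beta_\infty}\prod_p j_p^{-\beta_p j_p}$, so both distortions are $1+o(1)$ and enter the error term. The price is the extra factor $\#\mathcal Q\cdot\varepsilon_2^{-1}$ in the Borel--Cantelli sum, which is why the exponent balance \eqref{B-C condition} involves the codimension $\tfrac12(d+2)(d-1)+d$ (note the extra $+d$ coming from the affine $\xi$-direction, which your codimension count should also account for). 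In short, your proposal identifies the right ingredients but the multi-scale bookkeeping, which is the core of the argument, is exactly what the two glosses skip.
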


\section{Volume Computation for $\UL_d(\ZZ_S)\setminus\UL_d(\QQ_S)$ and $\SL_d(\ZZ_S)\setminus\SL_d(\QQ_S)$}\label{Subsect: volume}

We first construct Haar measures $\mu^{(d)}_p$ and $\nu^{(d)}_p$ on $\UL_d(\QQ_p)$ and $\SL_d(\QQ_p)$, respectively so that Haar measures $\mu^{(d)}$ and $\nu^{(d)}$ of $\UL_d(\QQ_S)$ and $\SL_d(\QQ_S)$ can be defined as $\mu^{(d)}=\prod_{p\in S} \mu^{(d)}_p$ and $\nu^{(d)}=\prod_{p\in S} \nu^{(d)}_p$, respectively.

Let $\SH$ be the subgroup of $\UL_d(\QQ_S)$ given by
\begin{equation}\label{dual AUL}
\SH=\left\{\left(\begin{array}{c|cccc}
1 & 0 & 0 & \cdots & 0 \\
\hline
 & & & & \\
\tp\vv' & & \sg' & & \\
& & & & \\
& & & & \\
\end{array}\right): \begin{array}{c}
\vv' \in \QQ_S^{d-1},\\
\sg'\in \UL_{d-1}(\QQ_S)\end{array}\right\}.
\end{equation}

Set $\mu^{(1)}_p(\UL_1(\QQ_p))=\mu^{(1)}(\ZZ_p - p\ZZ_p)=1-1/p$ and $\nu^{(1)}(\SL_1(\QQ_p))=\nu^{(1)}(\{1\})=1$. 
For $d\ge 2$, consider the map 
\[\Phi_p : \left\{\begin{array}{c}
\SH \\
\SH \cap \SL_d(\QQ_p) \end{array}\right. \times \left\{\vy=(y_1, \ldots, y_d)\in \QQ_p^d : y_1 \neq 0 \right\} \rightarrow \left\{\begin{array}{c}
\UL_d(\QQ_p)\\
\SL_d(\QQ_p)\end{array}\right.\]
given by
\begin{equation}\label{Map Phi}
\begin{split}
&\Phi_p : \left((\vv', \sg'), \vy \right) \mapsto \\
&\hspace{0.5in}A=\left(\begin{array}{c|cccc}
1 & 0 & 0 & \cdots & 0 \\
\hline
 & & & & \\
\tp\vv' & & \sg' & & \\
& & & & \\
& & & & \\
\end{array}\right)
\left(\begin{array}{c|cccc}
y_1 & y_2 & y_3 & \cdots & y_d \\
\hline
0 & 1/y_1 & & & \\
0 & & 1 & & \\
\vdots & & & \ddots & \\
0 & & & & 1 \\ 
\end{array}
\right).
\end{split}\end{equation}

Note that $\Phi_p$ is a diffeomorphism and its image covers $\UL_d(\QQ_p)$ ($\SL_d(\QQ_p)$, respectively) outside a co-dimension one submanifold.
Let $\mu^{(d-1)}_p$ and $\nu^{(d-1)}_p$ be Haar measures on $\UL_{d-1}(\QQ_p)$ and $\SL_{d-1}(\QQ_p)$, respectively, from the induction hypothesis. 
It is well-known that $\vol^{(d-1)}\cdot\mu_p^{(d-1)}$ and $\vol^{(d-1)}\cdot\nu_p^{(d-1)}$, where $\vol^{(d-1)}$ is the measure on $\QQ_p^{d-1}$ defined in Section~\ref{statements}, are Haar measures on $\SH$ and $\SH\cap \SL_d(\QQ_p)$, respectively. 

\begin{proposition}\label{Def Vol}
The push-forward measures
\[\begin{split}
\mu_p^{(d)}(\sg)&:=(\Phi_p)_* d\vv' d\mu_p^{(d-1)}(\sg') d\vy\;\text{and}\\
\nu_p^{(d)}(\sg)&:=(\Phi_p)_* d\vv' d\nu_p^{(d-1)}(\sg') d\vy
\end{split}\] 
give Haar measures on $\UL_d(\QQ_p)$ and $\SL_d(\QQ_p)$, respectively. Here, we simply denote $d\vv'=d\vol^{(d-1)}(\vv')$ and $d\vy=d\vol^{(d)}(\vy)$.
\end{proposition}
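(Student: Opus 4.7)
The plan is to proceed by induction on $d$; the base case $d=1$ is immediate from the definitions of $\mu_p^{(1)}$ and $\nu_p^{(1)}$. For the inductive step I handle the $\UL$ and $\SL$ cases in parallel. A direct check shows that $\SH$ (viewed inside $\UL_d(\QQ_p)$) is precisely the stabilizer of the row vector $\ve_1 = (1, 0, \dots, 0)$ under the right action $\vy \mapsto \vy \sg$, and similarly $\SH \cap \SL_d(\QQ_p)$ is the stabilizer of $\ve_1$ in $\SL_d(\QQ_p)$. Since the first row of $B(\vy)$ is $\vy$, the map $\Phi_p$ parametrizes the open dense set $\{\sg : \sg_{11} \neq 0\}$ of $\UL_d(\QQ_p)$ bijectively via the Bruhat-type decomposition $\sg = h\, B(\vy)$, with $\vy$ the first row of $\sg$ and $h \in \SH$ uniquely determined.

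Next I plan to exploit the semidirect product structure $\SH \simeq \QQ_p^{d-1} \rtimes \UL_{d-1}(\QQ_p)$: combined with the inductive hypothesis that $\mu_p^{(d-1)}$ is Haar on $\UL_{d-1}(\QQ_p)$, and using that $\UL_{d-1}(\QQ_p)$ acts linearly on $\QQ_p^{d-1}$ with determinant of $p$-adic absolute value one (hence preserves $d\vv'$), it follows that $d\vv'\, d\mu_p^{(d-1)}(\sg')$ is a Haar measure on $\SH$. The analogous assertion for $\SH \cap \SL_d(\QQ_p)$ with $\nu_p^{(d-1)}$ holds identically.

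I then identify the $\UL_d(\QQ_p)$-invariant measure on the quotient $\SH \backslash \UL_d(\QQ_p)$: under the first-row parametrization of right cosets, the induced right $\UL_d(\QQ_p)$-action becomes $\vy \mapsto \vy \sg_0$, a linear map with $|\det \sg_0|_p = 1$, which preserves Lebesgue measure $d\vy$. Since both $\UL_d(\QQ_p)$ and $\SH$ are unimodular (the former because $\UL_d/\SL_d \simeq \ZZ_p^*$ is compact, the latter by the semidirect product description), the Weil integration formula for unimodular quotients yields
\[
\int_{\UL_d(\QQ_p)} f(\sg)\, d\mu_p^{(d)}(\sg) \;=\; \int_{\{\vy\,:\, y_1 \neq 0\}} \int_{\SH} f(h\, B(\vy)) \, d\vv'\, d\mu_p^{(d-1)}(\sg') \, d\vy,
\]
which is exactly the claim that $(\Phi_p)_*\bigl(d\vv'\, d\mu_p^{(d-1)}(\sg')\, d\vy\bigr)$ is a Haar measure on $\UL_d(\QQ_p)$. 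The $\SL$ case proceeds identically with $\nu$ in place of $\mu$ throughout.

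The main technical point, and the step where care is needed, is checking that the section $\vy \mapsto B(\vy)$ pulls the $\UL_d(\QQ_p)$-invariant measure on $\SH \backslash \UL_d(\QQ_p)$ back to Lebesgue $d\vy$ with no Radon--Nikodym correction; this reduces to the unimodularity of both groups together with the observation above that the induced action on first rows has Jacobian of $p$-adic absolute value one.
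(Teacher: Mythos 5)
Your argument is correct, and it reaches the conclusion by a genuinely different route than the paper. Both proofs rest on the same decomposition $\sg=\sh\,B(\vy)$ built into $\Phi_p$, but the paper verifies invariance by hand: it reduces to right-invariance (both groups are unimodular) and computes the right translation explicitly in the coordinates $(\vv',\sg',\vy)$, checking that the only nontrivial Jacobian factors, $\left|d\mu_p^{(d-1)}(\sg'_1)/d\mu_p^{(d-1)}(\sg')\right|$ and $\left|d\vy_1/d\vy\right|$, both equal $1$ --- the first by the inductive $\UL_{d-1}(\QQ_p)$-invariance of $\mu_p^{(d-1)}$, the second by a direct matrix computation. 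You instead recognize $\SH$ as the stabilizer of $\ve_1$ for the right action on row vectors, identify $\SH\backslash\UL_d(\QQ_p)$ (off a null set) with $\{\vy : y_1\neq 0\}$ carrying Lebesgue measure, which is invariant because the induced action is linear with $|\det|_p=1$ and is the unique invariant measure up to scale, and then apply Weil's integration formula to the unimodular pair $(\UL_d(\QQ_p),\SH)$. Your route avoids the coordinate Jacobian computation entirely and makes the structural reason for the formula transparent; indeed it is the same disintegration $\int_{\UL_d(\QQ_S)}f\,d\widetilde{\mu}_S=\frac{1}{\zeta_S(d)}\int_{\QQ_S^d}\int_{\SH}f(\sh\,\sg_\vy)\,d\widetilde{\mu}_\SH(\sh)\,d\vy$ that the paper invokes later, in the proof of Proposition~\ref{MS (7.25) GKY (3.6)}. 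The price is that you import more machinery: unimodularity of $\SH$ (your semidirect-product remark suffices), existence and uniqueness of the invariant measure on the quotient, and the fact that the orbit map $\SH\sg\mapsto\ve_1\sg$ is a homeomorphism onto $\QQ_p^d-\{\origin\}$, whereas the paper's computation is self-contained once unimodularity of the two groups is granted. Two small points to tighten: compactness of $\UL_d(\QQ_p)/\SL_d(\QQ_p)\simeq\ZZ_p^\times$ alone does not give unimodularity of $\UL_d(\QQ_p)$ --- you also need the modular character to vanish on $\SL_d(\QQ_p)$ (it does, since that group is perfect), or you can simply observe that $\UL_d(\QQ_p)$ is an open subgroup of the unimodular group $\GL_d(\QQ_p)$; and Weil's formula identifies $(\Phi_p)_*\bigl(d\vv'\,d\mu_p^{(d-1)}(\sg')\,d\vy\bigr)$ with Haar measure only up to a positive scalar, which is exactly what the proposition asserts, so no normalization needs to be tracked.
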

\begin{proof} Since both groups admit lattice subgroups, Haar measures on them are unimodular. Hence it suffices to show that $$(\Phi_p)_*d\vv' d\mu_p^{(d-1)}(\sg')d\vy$$ 
is $\UL_d(\QQ_p)$-invariant under right multiplication $R_g$. We omit the argument for $\SL_d(\QQ_p)$ since the argument is identical. 

Since the image of $\Phi_p$ covers $\UL_d(\QQ_p)$ outside a co-dimension one submanifold, we may choose a generic element $\sh\in \UL_d(\QQ_p)$ of the form $\sh=\Phi_p\left((\vw', \sh'), \vx\right)$. Let $\vx=(x_1, \ldots, x_d)$ and $\vy=(y_1, \ldots, y_d)$. Then
\[\begin{split}
&\left((\vv', \sg'), \vy\right)\left((\vw', \sh'), \vx\right)\\
&=\left((\vv'+\vw'_1 \sg', \sg'\sh'_1), (z_1x_1, z_1(x_2, \ldots, x_d)+ (z_2, \ldots, z_d)\diag(x_1^{-1}, 1, \ldots, 1)\right)\\
&=:\left((\vv'_1, \sg'_1), \vy_1\right),
\end{split}\]
where $z_1={y_1+(y_2, \ldots, y_d)\tp\vw'}$, $\vw'_1 =\frac 1 {z_1}\:\vw'\:\diag(y_1^{-1}, 1, \ldots, 1),$ and
$$\sh'_1 =\left(\diag(y_1^{-1},1,\ldots,1)\left(\Id_{d-1}- \frac 1 {z_1} {\tp \vw'} (y_2, \ldots, y_d)\right)\sh'\right)\diag(z_1,1,\ldots,1).$$
Here, $\Id_{d-1}$ is the identity matrix of size $d-1$.

Since the Jacobi matrices $d\vy_1/d\sg'$, $d\vy_1/d\vv'$ and $d\mu_p^{(d-1)}(\sg'_1)/d\vv'$ are trivial and $d\vv'_1/d\vv'=\Id_{d-1}$, we have
\[
(R_g)_* d\vv' d\mu^{(d-1)}(\sg') d\vy 
= \left|\frac {d\mu_p^{(d-1)}(\sg'_1)}{d\mu_p^{(d-1)}(\sg')}\right|\left| \frac {d\vy_1}{d\vy}\right| d\vv' d\mu^{(d-1)}(\sg') d\vy.
\]
Since $\sg'_1=\sg'\sh'_1$ and $\sg'$ is irrelevant to $\sh'_1$, by the induction hypothesis that $\mu_p^{(d-1)}$ is $\UL_{d-1}(\QQ_p)$-invariant, we have that $$\left|d\mu^{(d-1)}_p(\sg'_1)/d\mu^{(d-1)}_p(\sg')\right|=1.$$
One can also show that $\left|d\vy_1/d\vy\right|=1$ by direct computation.
\end{proof}

When $p=\infty$, the measure $\mu_{\infty}^{(d)}$ of $\SL_d(\RR)$ can be identified with
\[
d\mu_{\infty}^{(d)}=\delta(1-\det g_\infty)\prod_{1\le i,j \le d} dg_{ij},
\]
where $g_\infty=(g_{ij})\in \SL_d(\RR)$ and $\delta$ is the Dirac-delta distribution (see formulas (3.67) and (3.70) in \cite{Mar2000}).

One can find volume formulas of the quotients of semisimple $S$-arithmetic Lie groups, based on the Iwasawa ($KAN$) decomposition of semisimple Lie groups in the work of Prasad \cite{Pra1989}. 
However, this method does not fit for our usage in the next section as well as $\UL_d(\QQ_S)$ is not semisimple. 
Hence let us devote some space to compute the volumes of the quotient spaces $\UL_d(\ZZ_S)\setminus \UL_d(\QQ_S)$ and $\SL_d(\ZZ_S)\setminus \SL_d(\QQ_S)$, adjusting the method provided by Siegel \cite{Sie1989}. 

For a positive integer $d\ge 2$, define
\[
\zeta^{}_{S}(d):=\sum_{t \in \NN_S} \frac 1 {t^d}.
\]

For a bounded and compactly supported function $f:\QQ_S^d\rightarrow \RR_{\ge 0}$, we define the \emph{homogenous Siegel transform} $\widetilde f$ by
\[
\widetilde{f}(\Lambda)=\sum_{\vv\in \Lambda-\{\origin\}} f(\vv), \; \forall \;\Lambda \in \UL_d(\ZZ_S)\setminus\UL_d(\QQ_S).
\]

Define $\alpha : \UL_d(\ZZ_S)\setminus \UL_d(\QQ_S) \rightarrow \RR_{\ge 0}$ by
\[
\alpha(\Lambda)=\sup_{1\le j \le d}
\left\{ \frac 1 {\|\vv_1 \wedge \cdots \wedge \vv_j\|} :
\begin{array}{c}
\vv_1, \ldots, \vv_j \in \Lambda\\
\text{linearly independent} \end{array}
\right\}.
\]

The transform $\widetilde f$ has the following property (\cite[Proposition 3.3]{Han2021}, see also \cite{Sch1971}).
\begin{theorem}\label{HLM Lemma 3.8} Let $f: \QQ_S^d \rightarrow \RR_{\ge 0}$ be bounded and compactly supported. Then there is $c=c(f)>0$ for which 
$$\widetilde f(\Lambda)< c\alpha(\Lambda)$$ 
for any $\Lambda \in \UL_d(\ZZ_S)\setminus \UL_d(\QQ_S)$.
The constant $c>0$ can be taken uniformly among the family of dilates of $f$ with appropriate normalization.
\end{theorem}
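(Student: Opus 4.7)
The plan is to mimic Schmidt's classical argument for $\SL_d(\ZZ)\setminus\SL_d(\RR)$, adapted to the $S$-arithmetic setting via the reduction theory developed in the context of $\UL_d(\QQ_S)$. First I would fix $\Lambda$ and use reduction theory to pick a $\ZZ_S$-basis $\vv_1,\ldots,\vv_d$ of $\Lambda$ whose partial wedge products are, up to bounded constants depending only on $d$, the ones that realize $\alpha(\Lambda)$. Concretely, by the analogue of Mahler/Minkowski reduction in $\UL_d(\QQ_S)$, one can arrange that for every $j$,
\[
\|\vv_1\wedge\cdots\wedge \vv_j\|\;\asymp\;\inf\bigl\{\|\vu_1\wedge\cdots\wedge\vu_j\|:\vu_1,\ldots,\vu_j\in\Lambda\text{ linearly independent}\bigr\},
\]
so that $\alpha(\Lambda)\asymp 1/\|\vv_1\wedge\cdots\wedge\vv_{j^*}\|$ for some index $j^*$ depending on $\Lambda$.

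Next, I would transfer the counting of $\vv\in\Lambda-\{\origin\}$ with $\vv\in\supp f$ to a counting of coefficient vectors. Writing $\vv=\sum_{i=1}^d a_i \vv_i$ with $(a_1,\ldots,a_d)\in\ZZ_S^d-\{\origin\}$, a bounded support hypothesis on $f$, say $\supp f\subseteq \prod_{p\in S}\{\|\cdot\|_p\le R_p\}$, translates via the Gram--Schmidt / triangular factorization of the reduced basis into place-by-place bounds of the form $|a_i|_p\le C R_p/\lambda_i^{(p)}$, where $\lambda_i^{(p)}$ are the local components of the successive minima read off from $\vv_i$. Summing the admissible integer tuples yields
\[
\#\{\vv\in\Lambda-\{\origin\}:\vv\in\supp f\}\;\le\;C'\prod_{i=1}^d\max\!\Bigl(1,\;\prod_{p\in S}R_p/\lambda_i^{(p)}\Bigr),
\]
with $C'$ depending only on $d$ and $\supp f$.

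Finally, I would use the unimodularity of $\Lambda$, which in the $S$-arithmetic setting gives $\prod_{i=1}^d\prod_{p\in S}\lambda_i^{(p)}=1$ (up to a universal constant from the reduction region), to see that the product above telescopes: keeping only the factors where the maximum is achieved by the non-trivial term gives
\[
\prod_{i\le j^*}\frac{\prod_p R_p}{\lambda_i^{(p)}}\;=\;\frac{(\prod_p R_p)^{j^*}}{\|\vv_1\wedge\cdots\wedge\vv_{j^*}\|}\;\asymp\;(\prod_p R_p)^{j^*}\alpha(\Lambda).
\]
Multiplying by $\|f\|_\infty$ and absorbing the $R$-dependence into $c=c(f)$ produces $\widetilde f(\Lambda)\le c\,\alpha(\Lambda)$. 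The ``uniform on dilates'' clause comes from tracking how $c(f)$ scales when $f$ is replaced by a normalized dilate, which only changes the $R_p$'s and the $L^\infty$-norm in a controlled fashion.

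The main obstacle I expect is the first step: making the $S$-arithmetic reduction theory sufficiently explicit that the partial wedges of a reduced basis genuinely compute $\alpha(\Lambda)$ up to a constant. At the infinite place this is standard Mahler theory, but at finite places one must replace Gram--Schmidt by a suitable Iwasawa/Bruhat decomposition compatible with the ultrametric norm, and then glue the local reductions via strong approximation for $\SL_d$ into a single global reduced basis. Once this bookkeeping is in place, the rest of the proof is essentially the classical geometry-of-numbers estimate.
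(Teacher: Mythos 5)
This statement is not proved in the paper at all --- it is imported from \cite[Proposition 3.3]{Han2021} (see also \cite{Sch1971}), and the proof there is precisely the Schmidt-style successive-minima argument you outline: choose a basis adapted to the successive minima, bound the admissible $\ZZ_S$-coefficient vectors place by place, and convert products of minima into partial wedge norms via an $S$-arithmetic Minkowski-type second theorem, so your plan matches the source. One small simplification: your first step demands more than is needed, since $1/\|\vv_1\wedge\cdots\wedge\vv_j\|\le\alpha(\Lambda)$ holds for \emph{any} linearly independent vectors of $\Lambda$ by the definition of $\alpha$ as a supremum, so the reduced basis need not realize $\alpha$ --- only the Minkowski comparison $\prod_{i\le j}\|\vv_i\|\asymp_d\|\vv_1\wedge\cdots\wedge\vv_j\|$ and the coefficient bounds are required, which is exactly what the adelic geometry of numbers used in \cite{HLM2017} provides.
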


The following theorem is found in \cite{HLM2017} (Lemma 3.10) for $\SL_d(\ZZ_S)\setminus \SL_d(\QQ_S)$ and \cite{Han2021} (Proposition 3.2) for $\UL_d(\ZZ_S)\setminus \UL_d(\QQ_S)$.

\begin{theorem}\label{HLM Lemma 3.10} For $1\le r < d$, 
\[
\int_{\UL_d(\ZZ_S)\setminus \UL_d(\QQ_S)} \alpha^r d\mu_S^{(d)} < \infty
\quad\text{and}\quad
\int_{\SL_d(\ZZ_S)\setminus \SL_d(\QQ_S)} \alpha^r d\mu_S^{(d)} < \infty.
\]
\end{theorem}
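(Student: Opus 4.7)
The plan is to carry out a Schmidt-type integrability argument in the $S$-arithmetic setting, treating both statements in parallel: I describe the strategy for $\UL_d(\ZZ_S)\setminus \UL_d(\QQ_S)$ and note that the $\SL_d$-case either follows from identical reasoning or, since $\UL_d(\QQ_S)/\SL_d(\QQ_S) \cong \prod_{p\in S_f}\ZZ_p^*$ is compact and $\alpha$ depends only on the underlying lattice, is a consequence of the first.

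First, using reduction theory and strong approximation for $S$-arithmetic groups, I would construct a Siegel fundamental domain $\Sigma \subset \UL_d(\QQ_S)$ covering $\UL_d(\ZZ_S)\setminus \UL_d(\QQ_S)$ with bounded multiplicity. We can take $\Sigma$ as a product of local Siegel sets $\Sigma = \prod_{p\in S}\Sigma_p$: at $p=\infty$, the classical $\Sigma_\infty=K_\infty A^+_{\infty,t}N_{\infty,0}$ with $K_\infty=\SO(d)$; at each $p\in S_f$, a set $\Sigma_p=K_p A^+_{p,t} N_{p,0}$ arising from the Iwasawa decomposition $\UL_d(\QQ_p)=K_pA_pN_p$, with $K_p=\UL_d(\ZZ_p)$, $A^+_{p,t}$ in a positive Weyl chamber, and $N_{p,0}$ a compact set. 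In these coordinates the local Haar measure factors as $d\mu_p^{(d)} = \delta_p(a_p)\,dk_p\,d^{*}a_p\,dn_p$ with modular function $\delta_p(a_p)=\prod_{1\le i<j\le d}|a_{p,i}/a_{p,j}|_p$ (up to normalisation).

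Next I would establish a bound on $\alpha$ in these coordinates. Using an $S$-arithmetic Minkowski theorem, for $g=(k_pa_pn_p)_{p\in S}\in\Sigma$ with $|a_{p,1}|_p\ge\cdots\ge|a_{p,d}|_p$ at each place, the wedge-norm $\|\vv_1\wedge\cdots\wedge\vv_j\|$ of any $j$ linearly independent short vectors in $\ZZ_S^d g$ is bounded below by a constant times $\prod_{i=d-j+1}^{d}\prod_{p\in S}|a_{p,i}|_p$, giving
\[
\alpha(\ZZ_S^d g) \le C \max_{1\le j\le d}\prod_{i=d-j+1}^{d}\Bigl(\prod_{p\in S}|a_{p,i}|_p\Bigr)^{-1}.
\]
Substituting this into $\int_\Sigma \alpha^r\,d\mu_S^{(d)}$, the $K_p$- and $N_{p,0}$-integrations contribute bounded constants; passing to the simple-root variables $t_{p,i}=|a_{p,i}/a_{p,i+1}|_p$ and splitting the $\max$ into finitely many cases reduces the remaining integral to a product of one-dimensional integrals of the form $\int_1^\infty t^{\beta-r\gamma}\,dt/t$, each of which converges precisely when $r<d$.

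The main obstacle is the estimate on $\alpha$ in Iwasawa coordinates: one must show that the globally shortest vectors of $\ZZ_S^d g$ are controlled by the local $A$-data at \emph{all} places simultaneously. This requires the $S$-arithmetic version of Minkowski's successive minima theorem together with a careful tracking of how local short vectors, at each place, assemble into a global $\ZZ_S$-integral vector under the $\UL_d(\ZZ_S)$-reduction to $\Sigma$. Once this comparison is established, the convergence for $r<d$ follows by direct integration, with the critical exponent appearing at the codimension-one walls of the Weyl chamber, exactly as in the classical case.
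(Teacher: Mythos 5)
First, note that the paper does not actually prove this statement: it is imported, with the $\SL_d$ case cited from \cite{HLM2017} (Lemma 3.10) and the $\UL_d$ case from \cite{Han2021} (Proposition 3.2). So the relevant comparison is with those $S$-arithmetic adaptations of Schmidt's classical integrability argument, and your outline (reduction theory, Iwasawa coordinates, a Minkowski-type control of $\alpha$ by the diagonal part, then a Weyl-chamber integration with critical exponent $d$) is the same general strategy, and the exponent range $r<d$ you aim for is the correct one (the critical contributions coming from $j=1$ and $j=d-1$).

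As a proof, however, what you have written is a plan rather than an argument: the two load-bearing steps are asserted, not established. (a) You never verify that your set $\Sigma=\prod_{p\in S}\Sigma_p$ of \emph{independent} local Siegel sets actually contains a fundamental domain for $\UL_d(\ZZ_S)$; this is not the standard $S$-arithmetic Siegel set, and it is the covering property (not bounded multiplicity, which you do not need since $\alpha\ge 0$ descends to the quotient) that makes $\int_\Sigma\alpha^r$ an upper bound for the integral over the quotient. (b) The comparison $\alpha(\ZZ_S^d\sg)\le C\max_j\prod_{i}\bigl(\prod_{p\in S}|a_{p,i}|_p\bigr)^{-1}$ is exactly the point you flag as ``the main obstacle,'' and the final claim that each resulting one-dimensional integral ``converges precisely when $r<d$'' is stated without the exponent bookkeeping; so the heart of the proof is missing. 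You could simplify considerably by exploiting that over $\QQ$ strong approximation (class number one) lets one take a fundamental domain inside $\Sigma_\infty\times\prod_{p\in S_f}\UL_d(\ZZ_p)$ with $\Sigma_\infty$ a classical Siegel set: then there are no finite-place $A$-parts at all, and the only finite-place contribution to $\alpha$ enters through the possible denominators of $S$-integral vectors, via $\Prim{\ZZ_S^d}=\PP_S\cdot\Prim{\ZZ^d}$, which produces a convergent geometric-series factor on top of the archimedean Minkowski comparison. If you insist on product Siegel sets, you should at least note the decoupling $\max_j\prod_{p}c_{p,j}\le\prod_p\max_j c_{p,j}$, which reduces the joint integral to a product of local Schmidt-type integrals; as it stands, the convergence claim for the coupled integral is unsupported.
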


We now come to the main volume estimate of this paper.
\begin{theorem}\label{Com Vol}
Let $\mu^{(d)}_S=\prod_{p\in S} \mu_p^{(d)}$ and $\nu^{(d)}_S=\prod_{p\in S} \nu_p^{(d)}$ be Haar measures on $\UL_d(\QQ_S)$ and $\SL_d(\QQ_S)$, respectively, where $\mu_p^{(d)}$ and $\nu_p^{(d)}$ are defined in Proposition \ref{Def Vol}. Then
\[\begin{split}
\mu^{(d)}_S\left(\UL_d(\ZZ_S)\setminus \UL_d(\QQ_S)\right)
&=\prod_{p\in S_f} \left(1-\frac 1 p\right)\cdot\zeta^{}_S(d)\zeta^{}_S(d-1)\cdots\zeta^{}_S(2)\;\text{and}\\
\nu^{(d)}_S\left(\SL_d(\ZZ_S)\setminus \SL_d(\QQ_S)\right)
&=\zeta^{}_S(d)\zeta^{}_S(d-1)\cdots\zeta^{}_S(2).
\end{split}\]
\end{theorem}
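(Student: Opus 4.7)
The plan is to induct on $d$, adapting Siegel's classical method \cite{Sie1989} to the $S$-arithmetic setting. I will treat the $\SL_d$-case in detail; the $\UL_d$-case follows by an essentially identical argument, the extra factor $\prod_{p\in S_f}(1-1/p)$ arising from $\mu^{(1)}_p(\UL_1(\QQ_p)) = 1 - 1/p$ versus $\nu^{(1)}_p(\SL_1(\QQ_p)) = 1$. The base case $d=1$ is immediate from the definitions preceding Proposition \ref{Def Vol}.

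For the inductive step I will compute $\int_{\SL_d(\ZZ_S)\setminus\SL_d(\QQ_S)} \widetilde{f}(\Lambda)\, d\nu^{(d)}_S(\Lambda)$ for a nonnegative compactly supported test function $f$ on $\QQ_S^d$ in two different ways. First, introduce the \emph{primitive} Siegel transform
\[
\widetilde{f}^{*}(\Lambda) := \sum_{\vv \in \Prim{\Lambda}} f(\vv).
\]
Since $\Prim{\ZZ_S^d} = \ve_1 \cdot \SL_d(\ZZ_S)$ with stabilizer $H := \{g : \ve_1 g = \ve_1\} = \SH \cap \SL_d(\QQ_S)$, the standard unfolding yields
\[
\int_{\SL_d(\ZZ_S)\setminus\SL_d(\QQ_S)} \widetilde{f}^{*}(\Lambda)\, d\nu^{(d)}_S(\Lambda)
= \int_{(H \cap \SL_d(\ZZ_S))\setminus\SL_d(\QQ_S)} f(\ve_1 g)\, d\nu^{(d)}_S(g).
\]
The key input from Proposition \ref{Def Vol} is that, in the $\Phi_p$-coordinates $A = \Phi_p((\vv',\sg'),\vy)$, one has $\ve_1 A = \vy$, while left multiplication by $H \cap \SL_d(\ZZ_S) = \ZZ_S^{d-1} \ltimes \SL_{d-1}(\ZZ_S)$ acts only on $(\vv', \sg')$ and fixes $\vy$. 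Combined with the factorization $d\nu^{(d)}_S = d\vv'\, d\nu^{(d-1)}_S(\sg')\, d\vy$, the identity $\vol(\ZZ_S^{d-1}\setminus\QQ_S^{d-1})=1$, and the induction hypothesis, this gives
\[
\int \widetilde{f}^{*}\, d\nu^{(d)}_S
= \nu^{(d-1)}_S\bigl(\SL_{d-1}(\ZZ_S)\setminus\SL_{d-1}(\QQ_S)\bigr) \int_{\QQ_S^d} f\, d\vol.
\]

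To pass from $\widetilde{f}^{*}$ to the full Siegel transform $\widetilde{f}(\Lambda) = \sum_{\vv \in \Lambda \setminus \{\origin\}} f(\vv)$, I will use the unique decomposition
\[
\ZZ_S^d \setminus \{\origin\} \;=\; \bigsqcup_{q \in \NN_S}\, q \cdot \Prim{\ZZ_S^d},
\]
which follows from $\NN \cong \NN_S \times \PP_S$ together with $\Prim{\ZZ_S^d} = \PP \cdot \Prim{\ZZ^d}$. This yields $\widetilde{f} = \sum_{q \in \NN_S} \widetilde{f_q}^{*}$ with $f_q(\vv) := f(q\vv)$, so using $\int f_q\, d\vol = q^{-d} \int f\, d\vol$,
\[
\int \widetilde{f}\, d\nu^{(d)}_S = \zeta^{}_S(d)\cdot \nu^{(d-1)}_S\bigl(\SL_{d-1}(\ZZ_S)\setminus\SL_{d-1}(\QQ_S)\bigr) \int f\, d\vol.
\]
On the other hand, the $S$-arithmetic Siegel mean value formula of \cite{Han2021} gives $\int \widetilde{f}\, d\nu^{(d)}_S = \nu^{(d)}_S(\SL_d(\ZZ_S)\setminus\SL_d(\QQ_S)) \int f\, d\vol$. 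Comparing the two expressions and cancelling $\int f\, d\vol$ yields the recursion $\nu^{(d)}_S = \zeta^{}_S(d)\, \nu^{(d-1)}_S$ on the respective quotients, and induction closes the argument.

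The main obstacle will be the $S$-arithmetic bookkeeping: verifying that $\ZZ_S^d\setminus\{\origin\}$ decomposes disjointly as $\bigsqcup_{q\in \NN_S} q \cdot \Prim{\ZZ_S^d}$ (handling the unit group $\ZZ_S^\times$ correctly), and making sure the unfolding is carried out consistently with the specific Haar measure $\nu^{(d)}_S$ defined via $\Phi_p$ in Proposition \ref{Def Vol}. The remaining steps are measure-theoretic bookkeeping using the inductive hypothesis.
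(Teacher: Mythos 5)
Your argument is correct, and its computational core coincides with the paper's: the same induction on $d$, the same unfolding of the primitive vectors $\Prim{\ZZ_S^d}=\ve_1\cdot\SL_d(\ZZ_S)$ against the stabilizer $\SH$ using the $\Phi_p$-coordinates of Proposition~\ref{Def Vol} (so that the fibre contributes $\vol(\ZZ_S^{d-1}\setminus\QQ_S^{d-1})\cdot V_{d-1}=V_{d-1}$), and the same decomposition $\ZZ_S^d\setminus\{\origin\}=\bigsqcup_{q\in\NN_S}q\cdot\Prim{\ZZ_S^d}$ (which indeed holds: uniqueness follows since $\NN_S\cap\ZZ_S^{\times}=\{1\}$, and $|q|_p=1$ for $p\in S_f$ gives $\int f_q\,d\vol=q^{-d}\int f\,d\vol$), leading to the recursion $V_d=\zeta_S(d)V_{d-1}$. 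Where you genuinely diverge is in how $\int\widetilde f\,d\nu^{(d)}_S$ is identified with $V_d\int f\,d\vol$: the paper does not invoke a pre-established mean value formula but instead runs Siegel's original device, approximating $\int_{\QQ_S^d}f\,d\vol$ by the scaled lattice sums $d(\lambda_n)^d\,\widetilde{f_{\lambda_n}}(\sg)$ and exchanging limit and integral over the fundamental domain via dominated convergence, which is exactly where Theorems~\ref{HLM Lemma 3.8} and~\ref{HLM Lemma 3.10} enter; you instead quote the $S$-arithmetic Siegel mean value formula of \cite{Han2021}/\cite{HLM2017} as a black box. This is legitimate and not circular (those formulas are proved by the Riesz-representation/transitivity argument independently of any volume computation), and it buys a shorter, cleaner proof; the paper's route buys self-containedness, needing only the $\alpha$-bounds rather than the mean value theorem. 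Two small points you should make explicit: (i) to pass from the normalized formula to $\int\widetilde f\,d\nu^{(d)}_S=V_d\int f\,d\vol$ you need $V_d<\infty$ a priori, which follows from Theorem~\ref{HLM Lemma 3.10} since $\alpha\ge 1$; and (ii) for the $\SL_d$-case you need the mean value formula with constant exactly $1$ on the orbit $\ZZ_S^d\cdot\SL_d(\QQ_S)$, which requires transitivity of $\SL_d(\QQ_S)$ on $\prod_{p\in S}(\QQ_p^d-\{\origin\})$ (true for $d\ge 2$) together with the usual normalization argument; both are available in the cited references.
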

\begin{proof}
As in Proposition \ref{Def Vol}, it suffices to show the statement for  $\UL_d(\QQ_S)$ since the proof of the $\SL_d(\QQ_S)$-case will be almost identical except the initial value $\nu_S^{(1)}\left(\SL_1(\ZZ_S)\setminus \SL_1(\QQ_S)\right)=1$, while $\mu_S^{(1)}\left(\UL_1(\ZZ_S)\setminus \UL_1(\QQ_S)\right)={\prod_{p\in S_f}(1-1/p)}$.
For simplicity, let us denote 
$$V_d=\mu_S^{(d)}\left(\UL_d(\ZZ_S)\setminus \UL_d(\QQ_S)\right).$$

Fix a fundamental domain $\mathcal F$ for $\UL_d(\ZZ_S)\setminus \UL_d(\QQ_S)$. Let $f:\QQ_S^d\rightarrow \RR$ be any given bounded continuous function of compact support.
Consider a sequence $\left(\lambda_n=(e^{-n}, p_1^{n}, \ldots, p_s^{n})\right)_{n\in \NN}$ of $S$-arithemetic numbers. It is obvious that for any $\sg\in \UL_d(\QQ_S)$,
\[\begin{split}
I&:= \int_{\QQ_S^d} f(\vv)d\vv
=\lim_{n\rightarrow\infty} d(\lambda_n)^{d}
\sum_{\vv\in \ZZ_S^d.\sg-\{0\}} f(\lambda_n\vv)\\
&=\lim_{n\rightarrow\infty} d(\lambda_n)^{d}\: \widetilde {f_{\lambda_n}}(\sg),
\end{split}\]
where $f_{\lambda_n}(\vv)=f(\lambda_n\vv)$ and $d(\lambda_n)=(ep_1\cdots p_s)^{-n}$ (Here, $d(\cdot)$ stands for the covolume of the lattice $\ZZ_S.\lambda_n \subseteq \QQ_S$).

By Lebesgue's dominated convergence theorem, using Theorem~\ref{HLM Lemma 3.8} and Theorem~\ref{HLM Lemma 3.10}, it follows that
\[
V_d I=\int_{\mathcal F} \lim_{n\rightarrow \infty}
d(\lambda_n)^{d}\: \widetilde {f_{\lambda_n}}(\sg) d\mu_S^{(d)}(\sg)
= \lim_{n\rightarrow \infty} d(\lambda_n)^{d} \int_{\mathcal F} \widetilde {f_{\lambda_n}}(\sg) d\mu_S^{(d)}(\sg).
\]

Put
\[\begin{split}
\xi&=d(\lambda_n)^{d} \int_{\mathcal F} \widetilde {f_{\lambda_n}}(\sg) d\mu_S^{(d)}(\sg)\;\text{and}\\
\xi_t&=d(\lambda_n)^{d} \sum_{\vv \in t\Prim{\ZZ_S^d}.\sg} \int_{\mathcal F}f(\lambda\vv)d\mu_S^{(d)}(\sg), \; \forall\; t \in \NN_S
\end{split}\]
so that $\xi=\sum_{t\in \NN_S} \xi_t$. We will compute $\xi_t$ using the induction hypothesis of dimension $d-1$ and hence obtain $\xi$. During the proof, we will see that $\xi$ and $\xi_t$ are eventually not relavant to $\lambda_n$ (which is why we do not use the notation $\xi_{n}$ or $\xi_{n,t}$ instead).

For each $\vk\in \Prim{\ZZ_S^d}$, assign an element $\sg_{\vk}\in \SL_d(\ZZ_S)$ for which $\vk=\ve_1.\sg_{\vk}$, where $\{\ve_i : 1\le i \le d\}$ is the canonical basis of $\QQ_S^d$. Denote by $[\sg]_1$ the first row of the matrix $\sg\in \SL_d(\QQ_S)$. Then
\[
d(\lambda_n)^{-d}\xi_1
=\sum_{\vk\in \Prim{\ZZ_S^d}} \int_{\sg_{\vk}\mathcal F} f(\lambda_n [\sg]_1) d\mu_S^{(d)}(\sg)
=\int_{\mathcal F_1} f(\lambda_n \sg)d\mu_S^{(d)}(\sg),
\]
where $\mathcal F_1:=\bigsqcup_{\vk\in \Prim{\ZZ_S^d}} \sg_{\vk}\mathcal F$.

Since $\big\{\sg_{\vk} : \vk\in \Prim{\ZZ_S^d}\big\}$ is the set of representatives of $\SH_{\ZZ_S}\setminus \UL_d(\ZZ_S)$, 
where $\SH_{\ZZ_S}:=\SH\cap \UL_d(\ZZ_S)$,
\[
\UL_d(\QQ_S)
=\bigsqcup_{\gamma\in\UL_d(\ZZ_S)}\mathcal F
=\bigsqcup_{\gamma'\in\SH_{\ZZ_S}}\bigsqcup_{\vv\in \Prim{\ZZ_S^d}}\sg_v \mathcal F
=\bigsqcup_{\gamma'\in\SH_{\ZZ_S}} \mathcal F_1
\]
so that $\mathcal F_1$ is the fundamental domain of $\SH_{\ZZ_S}$ in $\UL_d(\QQ_S)$.

On the other hand, by considering the inverse map of $\Phi=\prod_{p\in S} \Phi_p$, where $\Phi_p$ is given as in \eqref{Map Phi}, any element $\sg=\big(g_p=(g^{(p)}_{ij})\big)_{p\in S} \in \UL_d(\QQ_S)$ such that $g^{(p)}_{11}\neq 0$, for all $p\in S$, is uniquely expressed as 
\[\sg= \left(\begin{array}{c|cccc}
1 & 0 & 0 & \cdots & 0 \\
\hline
 & & & & \\
\tp\vv' & & \sg' & & \\
& & & & \\
& & & & \\
\end{array}\right)
\left(\begin{array}{c|cccc}
\sg_{11} & \sg_{12} & \sg_{13} & \cdots & \sg_{1d} \\
\hline
0 & 1/\sg_{11} & & & \\
0 & & 1 & & \\
\vdots & & & \ddots & \\
0 & & & & 1 \\ 
\end{array}
\right),
\]
where $\sg'\in \UL_{d-1}(\QQ_S)$ and $\vv' \in \QQ_S^{d-1}$.
Let $\mathcal F'$ be any fundamental domain of $\UL_{d-1}(\ZZ_S)\setminus \UL_{d-1}(\QQ_S)$. Then $(\ZZ_S\setminus \QQ_S)^{d-1}\times \mathcal F'$ is the fundamental domain of $\SH_{\ZZ_S}\setminus \SH$ and 
\[
\mathcal F_2:=\left((\ZZ_S\setminus \QQ_S)^{d-1}\times \mathcal F'\right)\times \left\{\vy=((y^{(p)}_1, \ldots, y^{(p)}_d))_{p\in S} \in \QQ_S^d : y^{(p)}_1\neq 0 \right\}
\]
is a measure-theoretic fundamental domain of $\SH_{\ZZ_S}$ in $\UL_d(\QQ_S)$, i.e., $${\sg_1 \mathcal F_2 \cap \sg_2 \mathcal F_2=\emptyset} \text{ if } \sg_1\neq \sg_2 \in \SH_{\ZZ_S}\;\text{and}$$
\[
\mu^{(d)}_S\left(\UL_d(\QQ_S)- \bigsqcup_{\sg \in \SH_{\ZZ_S}} \sg\mathcal F_2\right)=0.
\]

Hence
\[\begin{split}
d(\lambda_n)^{-d}\xi_1
&=\int_{\mathcal F_2} f(\lambda_n \vy) d\vv' d\mu_S^{(d-1)}(\sg') d\vy \\
&=\int_{\left(\ZZ_S\setminus \QQ_S\right)^{d-1}\times\mathcal F'}\left(\int_{\QQ_S^d}
f(\lambda_n \vy) d\vy\right) d\vv' d\mu_S^{(d-1)}(\sg')\\
&=\int_{\left(\ZZ_S\setminus \QQ_S\right)^{d-1}\times\mathcal F'}
d(\lambda_n)^{-d} I\: d\vv' d\mu_S^{(d-1)}(\sg')\\
&=d(\lambda_n)^{-d} I\:\mu_S^{(d-1)}\left(\UL_{d-1}(\ZZ_S)\setminus \UL_{d-1}(\QQ_S)\right).
\end{split}\]
Therefore $\xi_1=V_{d-1}I$ and similarly, one can show that $\xi_t=(1/t^d)V_{d-1}I$ for any $t\in \NN_S$. Since $\xi=\sum_{t\in \NN_S} \xi_t$, we have
\[V_d=\sum_{t\in \NN_S} \frac 1 {t^d} V_{d-1}=\zeta_S(d) V_{d-1}
\]
and by indcution hypothesis,
\[\mu_S^{(d)}\left(\UL_d(\ZZ_S)\setminus \UL_d(\QQ_S)\right)
=V_d=\prod_{p\in S_f}\left(1-\frac 1 p\right)\cdot\zeta_S(d)\zeta_S(d-1)\cdots \zeta_S(2).
\]

\end{proof}

\section{The First and Second Moment Formulas}
For a bounded compactly supported function $f:\QQ_S^d \rightarrow \RR$, 
define the \emph{Siegel transform} $\widehat f : \AUL_d(\ZZ_S)\setminus\AUL_d(\QQ_S) \rightarrow \RR$ by
\begin{equation}\label{def: Siegel transform}
\widehat f (\Lambda)=\sum_{\vk\in \Lambda} f(\vk).
\end{equation}

In this section, we establish the first and second moment fomulae for $S$-adic Siegel transforms on congruence quotients. 
For $\q{}{}=\q{\sg}{0}$, since
$$\q{}{}\big(q\ZZ_S^d+\vp\big)=\q{}{0}\big((q\ZZ_S^d+\vp)\sg\big)=q^2\q{}{0}\big((\ZZ_S^d+\vp/q)\sg\big),$$
by the duality principle, the space of our interests is 
$$Y_{\vp/q}:=\left\{\left(\ZZ_S^d+\frac {\vp} q \right)\sg : \sg \in \UL_d(\QQ_S)\right\}.
$$

For $q\in \NN_S$, let
\[
\Gamma_1(q)=\left\{\gamma\in \UL_d(\ZZ_S) : \ve_d\gamma \equiv \ve_d\mod q\right\}.
\]

One can show that $Y_{\vp/q}$ can be identified with the quotient space of $\UL_d(\QQ_S)$.

\begin{lemma}\label{GKY Lemma 3.1} 
Let $q\in \NN_S$ and $\vp\in \ZZ_S^d$ be such that $\gcd(q,\vp)=1$. Fix any $\gamma_\vp\in \SL_d(\ZZ_S)$ such that $\vp.\gamma_\vp^{-1}\in \ZZ_S.\ve_d$.
Then $Y_{\vp/q}$ is identified with $\gamma_{\vp}^{-1}\Gamma_1(q)\gamma_{\vp}\setminus \UL_d(\QQ_S)$ via the correspondence $$\left(\ZZ_S^d+\vp/q\right)\sg \leftrightarrow (\gamma_{\vp}^{-1}\Gamma_1(q)\gamma_{\vp})\sg.$$
\end{lemma}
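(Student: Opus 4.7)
The plan is to identify $Y_{\vp/q}$ as a homogeneous space by computing the stabilizer of the base point $\ZZ_S^d+\vp/q$ under the right action of $\UL_d(\QQ_S)$, and then conjugate it into the standard form $\Gamma_1(q)$.

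Since $\UL_d(\QQ_S)$ acts transitively on $Y_{\vp/q}$ by the very definition of this space, it suffices to exhibit $\gamma_\vp^{-1}\Gamma_1(q)\gamma_\vp$ as the stabilizer of $\ZZ_S^d+\vp/q$; the asserted correspondence then follows from the orbit-stabilizer principle. First I would determine the stabilizer in elementary terms: a linear element $\sg\in\UL_d(\QQ_S)$ satisfies $(\ZZ_S^d+\vp/q)\sg=\ZZ_S^d+\vp/q$ iff the underlying lattice and the translation class (modulo the lattice) both coincide, namely
\[
\ZZ_S^d\sg=\ZZ_S^d \quad\text{and}\quad \tfrac{\vp}{q}\sg-\tfrac{\vp}{q}\in\ZZ_S^d.
\]
The first condition says $\sg\in\UL_d(\ZZ_S)$, and the second is equivalent to the congruence $\vp\sg\equiv\vp\pmod{q\ZZ_S^d}$. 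So the stabilizer is
\[
\Sigma_{\vp,q}:=\{\sg\in\UL_d(\ZZ_S):\vp\sg\equiv\vp\pmod q\}.
\]

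Next I would use $\gamma_\vp$ to conjugate. Write $\vp\gamma_\vp^{-1}=a\ve_d$ with $a\in\ZZ_S$; since $\gamma_\vp\in\SL_d(\ZZ_S)$, the entries of $\vp$ are $\ZZ_S$-multiples of $a$, so $\gcd(\vp)\mid a$, and the hypothesis $\gcd(q,\vp)=1$ forces $\gcd(a,q)=1$, i.e.\ $a$ is a unit in $\ZZ_S/q\ZZ_S$. Then for $\tau\in\Gamma_1(q)$ and $\sg=\gamma_\vp^{-1}\tau\gamma_\vp$, one has $\sg\in\UL_d(\ZZ_S)$ and
\[
\vp\sg=(\vp\gamma_\vp^{-1})\tau\gamma_\vp=a(\ve_d\tau)\gamma_\vp\equiv a\ve_d\gamma_\vp=\vp\pmod q,
\]
which shows $\gamma_\vp^{-1}\Gamma_1(q)\gamma_\vp\subseteq\Sigma_{\vp,q}$. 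Conversely, for $\sg\in\Sigma_{\vp,q}$, set $\tau:=\gamma_\vp\sg\gamma_\vp^{-1}\in\UL_d(\ZZ_S)$; then
\[
a\ve_d\tau=\vp\gamma_\vp^{-1}\tau=(\vp\sg)\gamma_\vp^{-1}\equiv\vp\gamma_\vp^{-1}=a\ve_d\pmod q,
\]
and invertibility of $a$ modulo $q$ allows cancellation to give $\ve_d\tau\equiv\ve_d\pmod q$, i.e.\ $\tau\in\Gamma_1(q)$. This proves the reverse inclusion and completes the identification.

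The main technical point is the step where $a$ is cancelled modulo $q$; the coprimality $\gcd(a,q)=1$ (derived from $\gcd(q,\vp)=1$ via $\gamma_\vp\in\SL_d(\ZZ_S)$) is exactly what makes the conjugation match the standard congruence subgroup $\Gamma_1(q)$ cut out by the single row condition $\ve_d\gamma\equiv\ve_d\pmod q$. Everything else is a routine bookkeeping of lattices vs.\ affine lattices and the change of base point.
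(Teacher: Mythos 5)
Your proof is correct and takes the standard route: realize $Y_{\vp/q}$ as an orbit, compute the stabilizer of the base point $\ZZ_S^d+\vp/q$ in elementary terms (lattice part plus translation class), and then conjugate it by $\gamma_\vp$ to match the row-congruence subgroup $\Gamma_1(q)$. The paper defers to Lemma 3.1 of \cite{GKY2020}, and that proof is the same stabilizer-plus-conjugation computation, so your argument reproduces the intended one.

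One small slip to fix: you write ``the entries of $\vp$ are $\ZZ_S$-multiples of $a$, so $\gcd(\vp)\mid a$.'' The premise gives the \emph{opposite} divisibility, $a\mid\gcd(\vp)$, and that is also the direction actually used: any common divisor of $q$ and $a$ then divides $\gcd(\vp)$, so $\gcd(q,\vp)=1$ forces $\gcd(a,q)=1$. (In fact $a$ and $\gcd(\vp)$ agree up to a unit of $\ZZ_S$, since $\ve_d\gamma_\vp$ is a row of a matrix in $\SL_d(\ZZ_S)$ and hence primitive, but what the derivation in the proof produces and what is needed is $a\mid\gcd(\vp)$.) With that corrected, the rest of the cancellation argument modulo $q$ is fine.
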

\begin{proof}The proof is identical with that of Lemma 3.1 in \cite{GKY2020}.
\end{proof}

Here, the notion $\gcd(q,\vk)$ can be defined as follow.
\begin{definition}
For $q\in \NN_S$ and $\vk\in \ZZ_S^d$, define
\[
\gcd(q,\vk)=\gcd(q, \vk'),
\]
where $\vk'$ is any integral vector in $(\PP_S\cdot\vk)\cap \ZZ$.
\end{definition}

We now show the first and the second moment formulas for the Siegel transform on $Y_{\vp/q}$. 
Let $\widetilde{\mu}_S$ and $\widetilde{\nu}_S$ be the normalized Haar measures of $\UL_d(\QQ_S)$ and $\SL_d(\QQ_S)$, respectively, such that $$\widetilde{\mu}_S\left(\UL_d(\ZZ_S)\setminus\UL_d(\QQ_S)\right)=1=\widetilde{\nu}_S\left(\SL_d(\ZZ_S)\setminus\SL_d(\QQ_S)\right).$$

\begin{theorem}\label{moment theorems}
Consider $1\neq q\in \NN_S$ and $\vp\in \ZZ_S^d$ such that $\gcd(q, \vp)=1$. Let $f: \QQ_S^d \rightarrow \RR$ be a bounded and compactly supported function.
\begin{enumerate}[(a)]
\item For $d\ge 2$,
\[
\frac 1 {J_q} \int_{Y_{\vp/q}} \widehat f\left(\left(\ZZ_S^d+\frac \vp q\right)\sg\right) d\widetilde{\mu}_S(\sg)
=\int_{\QQ_S^d} f(\vv) d\vv,
\]
where $J_q=\widetilde{\mu}_S(Y_{\vp/q})$.
\item For $d\ge 3$,
\[\begin{split}
&\frac 1 {J_q} \int_{Y_{\vp/q}} \widehat f\left(\left(\ZZ_S^d+\frac \vp q\right)\sg\right)^2 d\widetilde{\mu}_S(\sg)\\
&\hspace{0.4in}=\left(\int_{\QQ_S^d} f d\vv\right)^2+
\sum_{\scriptsize \begin{array}{c}
t\in \NN_S\\
\gcd(t,q)=1\end{array}}\sum_{\scriptsize\begin{array}{c}
a\in q\ZZ_S+t\\
\gcd(a,t)=1\end{array}}
\int_{\QQ_S^d}f(t\vv)f(a\vv)d\vv.
\end{split}\]
\end{enumerate}
\end{theorem}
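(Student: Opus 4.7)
I plan to adapt the unfolding argument of Ghosh--Kelmer--Yu~\cite{GKY2020} to the $S$-arithmetic setting, using the Haar measure machinery developed in Section~\ref{Subsect: volume}. Lemma~\ref{GKY Lemma 3.1} together with the measure-preserving change of variable $\sg \mapsto \gamma_\vp \sg$ reduces the calculation to integrals over $\Gamma_1(q) \setminus \UL_d(\QQ_S)$, with affine lattice of the form $L := \ZZ_S^d + c\ve_d/q$ for some $c \in \ZZ_S$ with $\gcd(c,q)=1$; by definition of $\widetilde{\mu}_S$ we have $J_q = [\UL_d(\ZZ_S) : \Gamma_1(q)]$.

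For part~(a), I unfold along $\Gamma_1(q)$-orbits on $L$. Rescaling by $q$, every $\vv \in q\ZZ_S^d + c\ve_d$ satisfies $\gcd(\vv,q)=1$, and therefore factors uniquely (up to units of $\ZZ_S$) as $\vv = t\vw$ with $\vw$ primitive in $\ZZ_S^d$ and $t \in \NN_S$; transitivity of $\Gamma_1(q)$ on primitive vectors in a fixed residue class modulo $q$ shows the orbits are indexed by the scale $t \in \NN_S$. For each orbit representative, the factorization $\Phi_p$ from Proposition~\ref{Def Vol} identifies $\mathrm{Stab}(\ve_1)\setminus\UL_d(\QQ_S)$ with $\QQ_S^d$ carrying Lebesgue measure; the indices $[\mathrm{Stab}(\vw):\mathrm{Stab}(\vw)\cap\Gamma_1(q)]$ produce local $\zeta_S$-factors, and summing over $t$ combines with Theorem~\ref{Com Vol} to give the prefactor $J_q$. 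Dividing by $J_q$ yields the stated first moment.

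For part~(b), I expand $\widehat{f}(L\sg)^2 = \sum_{\vv_1,\vv_2\in L} f(\vv_1\sg)f(\vv_2\sg)$ and split the sum by whether $(\vv_1,\vv_2)$ is linearly independent over $\QQ_S$. For the linearly independent pairs, the common stabilizer is conjugate to the parabolic fixing a $2$-plane; enumerating $\Gamma_1(q)$-orbits by the invariant-factor type of the sublattice $\ZZ_S\vv_1+\ZZ_S\vv_2$ and unfolding each orbit integral shows that the orbits collectively tile $\QQ_S^d\times\QQ_S^d$ up to a null set, producing the main term $(\int f\, d\vol)^2$ after dividing by $J_q$. For the linearly dependent pairs, writing $\vv_2=\lambda\vv_1$ forces $\vv_1=(t/q)\vw$ and $\vv_2=(a/q)\vw$ for a primitive $\vw$ and integers $t \in \NN_S$, $a \in q\ZZ_S+t$, the coprimality $\gcd(a,t)=1$ pinning down $\vw$ canonically. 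The common stabilizer of the pair is again conjugate to $\SH$, and the orbit integral unfolds via $\SH\setminus\UL_d(\QQ_S)\cong\QQ_S^d\setminus\{0\}$ to
\[
\int_{\QQ_S^d} f(t\vy)\,f(a\vy)\,d\vy,
\]
with the $q^d$ Jacobian absorbed into the $J_q$ normalization. Summing over admissible $(t,a)$ gives the extra series.

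The main obstacle, as in the homogeneous Rogers--Schmidt setting, is the orbit bookkeeping for the linearly-independent case: $\Gamma_1(q)$ does not act transitively on pairs of $L$ generating a given sublattice, so one must carefully enumerate orbits by their invariant factors and verify that the combined contributions tile $\QQ_S^d\times\QQ_S^d$ with multiplicity exactly $J_q$. For the linearly-dependent case, the subtlety is the precise evaluation of $[\mathrm{Stab}(\vw):\mathrm{Stab}(\vw)\cap\Gamma_1(q)]$, which must conspire with the residue-class constraint modulo $q$ to collapse to the single condition $a \equiv t \pmod{q}$ in the final sum.
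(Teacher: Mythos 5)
Your outline is a classical Rogers-style argument -- expand the square, sort pairs by proportionality, enumerate $\Gamma_1(q)$-orbits of pairs, and unfold -- and it is viable in principle, but it is genuinely different from the paper's route. For part (a) the paper does no unfolding at all: it notes that $f\mapsto \frac1{I_q}\int_{X_q}\widehat f\big((\ZZ_S^d+\vp/q)\sg\big)\,d\widetilde{\mu}_S$ is a $\UL_d(\QQ_S)$-invariant positive functional on $C_c\big(\prod_{p\in S}(\QQ_p^d-\{\origin\})\big)$, applies Riesz--Markov--Kakutani together with transitivity of the action, and pins the constant down to $1$ by a limiting family of test functions; your unfolding version instead needs transitivity of $\Gamma_1(q)$ (or $\Gamma(q)$) on primitive $\ZZ_S$-vectors in a fixed residue class mod $q$, which in the $S$-arithmetic setting is exactly Proposition~\ref{S-arithmetic part} and costs a real argument (Dirichlet's theorem in arithmetic progressions plus lifting $\SL_{d-1}(\ZZ/q\ZZ)$ to $\SL_{d-1}(\ZZ)$), so it cannot be quoted as free. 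For part (b) the paper follows the Marklof--Str\"ombergsson scheme: it unfolds only in the first vector (Proposition~\ref{MS (7.25) GKY (3.6)}), then computes the conditional average of $\widehat f$ over the fiber $X_q(\vy)$ of lattices through a fixed point $\vy$ (Proposition~\ref{MS Proposition 7.6}, via Lemma~\ref{MS Proposition 7.7}), where the non-proportional points are handled by reusing the first-moment formula inside $\SH$. This completely sidesteps your heaviest step -- the enumeration of rank-two pair orbits -- and makes the independent pairs contribute exactly $\big(\int f\big)^2$ with no extra identity to check; note also that under a congruence group the orbits of independent pairs are not determined by invariant factors alone, you must track residues mod $q$ as well, so that bookkeeping is heavier than your sketch suggests.

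One concrete slip in your dependent-pair analysis: you cannot insist simultaneously that $\vw$ be primitive and that $\gcd(a,t)=1$. For instance (say $2,3,5\notin S$, $q=5$, $\vp=\ve_1$) the proportional pair $\big(\tfrac65\ve_1,\tfrac{21}5\ve_1\big)$ in $\ZZ_S^d+\vp/5$ has primitive direction $\ve_1$ with scales $(t,a)=(6,21)$ and $\gcd(a,t)=3$; to reach the normalized parameters $(2,7)$ appearing in the theorem you must allow $\vw=3\ve_1$, i.e.\ drop primitivity and sum over the content of $\vw$. That content sum produces an extra $\zeta_S$-type factor which has to cancel against the covolume/index normalization -- precisely the M\"obius-zeta identity invoked at the end of the paper's proof of Proposition~\ref{MS Proposition 7.6}, using $\sum_{t\in\NN_S,\ \gcd(t,q)=1}t^{-d}=\zeta_S(d)\sum_{e\mid q}\mu(e)e^{-d}$ and the corresponding expression for $I_q$. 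So your plan can be completed, but the two coefficient identities you defer (independent-pair orbit weights summing to exactly $J_q$, and the content/index bookkeeping collapsing the proportional pairs to coefficient $1$ for each admissible $(t,a)$) are the real content of the theorem; the paper obtains both with far less enumeration by recycling part (a) in the fiber over $\vy$.
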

\begin{remark} One can also consider $Z_{\vp/q}:=\left\{(\ZZ_S^d+\vp/q)\sg : \sg\in \SL_d(\QQ_S)\right\}$ and moment formulas on $Z_{\vp/q}$. By replacing $Y_{\vp/q}$ and $\UL_d(\QQ_S)$ by $Z_{\vp/q}$ and $\SL_d(\QQ_S)$ in the arguments throughout this section, one may obtain the following results: under the same assumptions as in Theorem~\ref{moment theorems}, it follows that
\begin{enumerate}[(a')]
\item For $d\ge 2$,
\[
\frac 1 {J_q} \int_{Z_{\vp/q}} \widehat f\left(\left(\ZZ_S^d+\frac \vp q\right)\sg\right) d\widetilde{\nu}_S(\sg)
=\int_{\QQ_S^d} f(\vv) d\vv,
\]
where $J_q=\widetilde{\nu}_S(Z_{\vp/q})\left(=\widetilde{\mu}_S(Y_{\vp/q})\right)$.
\item For $d\ge 3$,
\[\begin{split}
&\frac 1 {J_q} \int_{Z_{\vp/q}} \widehat f\left(\left(\ZZ_S^d+\frac \vp q\right)\sg\right)^2 d\widetilde{\nu}_S(\sg)\\
&\hspace{0.4in}=\left(\int_{\QQ_S^d} f d\vv\right)^2+
\sum_{\scriptsize \begin{array}{c}
t\in \NN_S\\
\gcd(t,q)=1\end{array}}\sum_{\scriptsize\begin{array}{c}
a\in q\ZZ_S+t\\
\gcd(a,t)=1\end{array}}
\int_{\QQ_S^d}f(t\vv)f(a\vv)d\vv.
\end{split}\]
\end{enumerate} 
\end{remark}

\subsection{Proof of Theorem~\ref{moment theorems}}
Let $X_q=\Gamma(q)\setminus \UL_d(\QQ_S)$, where
\[
\Gamma(q):=\left\{\gamma\in \UL_d(\ZZ_S) : \gamma\equiv \Id\mod q\right\}.
\]
Note that $\Gamma(q)$ is a finite-index subgroup of $\UL_d(\ZZ_S)=\SL_d(\ZZ_S)$ since it is the kernel of the projection
$$\pi_q:\UL_d(\ZZ_S)\rightarrow \UL_d(\ZZ_S/q\ZZ_S)\simeq \SL_d(\ZZ/q\ZZ).$$
Since $\Gamma(q)$ is a normal subgroup of $\UL_d(\ZZ_S)$ and $\Gamma(q)<\Gamma_1(q)$, 
$X_q$ is a finite covering of $Y_q$. 
Hence any function $\phi$ on $Y_q$  extends to a $\gamma_{\vp}^{-1}\Gamma_1(q)\gamma_{\vp}$-invariant function on $X_q$, where $\gamma_{\vp}$ is as in Lemma~\ref{GKY Lemma 3.1} and it immediately follows that 
$$\frac 1 {J_q}\int_{Y_{\vp/q}}\phi\: d\widetilde{\mu}_S=\frac 1 {I_q}\int_{X_q}\phi \:d\widetilde{\mu}_S.$$

\begin{proof}[Proof of Theorem~\ref{moment theorems} (a)]
Note that we assume that $\vp/q\notin \ZZ_S^d$.
 
For a bounded function $f:\QQ_S^d\rightarrow \RR_{\ge 0}$ of compact support and $q\in \NN_S$, define $f_q(\vv):=f(\frac 1 q \vv)$. Since
\[\begin{split}
\widehat{f}\left(\ZZ_S^d+\frac \vp q\right)
&=\sum_{\vv\in \ZZ_S^d+\frac \vp q} f(\vv \sg)
=\sum_{\vv \in \frac 1 q (q\ZZ_S^d+\vp)} f(\vv \sg)
=\sum_{\vk\in q\ZZ_S^d+\vp} f\left(\frac 1 q \vk \sg\right)\\
&\le\sum_{\vk\in \ZZ_S^d-\{\origin\}} f_q(\vk \sg)
=\widehat{f_q}(\ZZ_S^d\sg),
\end{split}\]
by \cite[Lemma 3.8]{HLM2017} and \cite[Lemma 3.10]{HLM2017}, the map 
\[
f\mapsto \frac 1 {I_q}\int_{X_q} \widehat{f}\left((\ZZ_S^d+\vp/q)\sg\right)d\widetilde{\mu}_S(\sg)\] 
is a $\UL_d(\QQ_S)$-invariant positive linear functional on $C_c\big(\prod_{p\in S}(\QQ_p^d-\{\origin\})\big)$,
where $I_q=\widetilde{\mu}_S(X_q)$. 
Since $\UL_d(\QQ_S)$ acts on $\prod_{p\in S}(\QQ_p^d-\{\origin\})$ transitively, by the Riesz-Markov-Kakutani representation theorem, it follows that
\begin{equation}\label{(a) eq:1}
\frac 1 {I_q}\int_{X_q} \widehat{f}\left((\ZZ_S^d+\vp/q)\sg\right)d\widetilde{\mu}_S(\sg)= c\int_{\QQ_S^d} f(\vv) d\vv
\end{equation}
for some $c>0$. Here we use the fact that we can identify a $\UL_d$-invariant measure on $\prod_{p\in S}(\QQ_p^d-\{\origin\})$ with the volume measure of $\QQ_S^d$ up to scaling.

Now, for each $t\in \NN$, let
\[V_t=\QQ_S^d-\left(
\left(B^{(\infty)}_{p^{-t}}(\origin)\times \prod_{p\in S_f} \QQ_p^d\right) 
\cup \bigcup_{i=1}^s 
\left(\RR^d\times p^t \ZZ_{p_i}^d\times \prod_{\scriptsize \begin{array}{c}
p\in S_f\\
p\neq p_i\end{array}}\QQ_p^d\right)\right),
\]
where $B^{(\infty)}_{p^{-t}}(\origin)$ is the ball of radius $p^{-t}$ at the origin in $\RR^d$. For each $k\in \NN$, denote $B_k=B^{(\infty)}_{p^k}(\origin)\times \prod_{p\in S_f} p^{-k}\ZZ_p^d$.
Take a sequence $\{f_k\}$ in $C_c\big(\prod_{p\in S}(\QQ_p^d-\{\origin\})\big)$ such that $f_k\le \ind_{B_k}|_{V_1}$ and as $k\rightarrow \infty$, $f_k$ asymptotically converges to $\ind_{B_k}|_{V_1}$.
By putting $\{\frac 1 {\vol(f_k)} f_k\}$ on both sides in \eqref{(a) eq:1} and letting $k\rightarrow \infty$, we obtain that $c=1$.

For an arbitrary bounded and compactly supported function $f$ on $\QQ_S^d$, since any affine lattice does not intersect with the complement of $\prod_{p\in S}(\QQ_p^d-\{\origin\})$, if we take $\{f'_t\} \subseteq C_c\big(\prod_{p\in S}(\QQ_p^d-\{\origin\}\big)$ such that $f|_{V_{t-1}}\le f'_t\le f|_{V_t}$, then as $t\rightarrow \infty$, $f'_t$ converges to $f$ and we have
\[
\lim_{t\rightarrow\infty}\int_{X_q} \widehat{f'_t}\left(\left(\ZZ_S^d+\frac \vp q\right)\sg\right)d\widetilde{\mu}_S(\sg)= \int_{X_q} \widehat{f}\left(\left(\ZZ_S^d+\frac \vp q\right)\sg\right)d\widetilde{\mu}_S(\sg)
\]
which proves (a) (see \cite[Section 3]{Han2021} for more details).
\end{proof}

Following \cite[Section 3.2]{GKY2020}, we will deduce Theorem~\ref{moment theorems} (b) from Proposition~\ref{MS (7.25) GKY (3.6)} and Proposition~\ref{MS Proposition 7.6}, whose proofs will be provided in Subsection~\ref{MS Section 7-2}.

For each $\vy \in \prod_{p\in S} (\QQ_S^d-\{\origin\})$, define
\[
X_q(\vy)=\left\{\Gamma(q)\sg \in X(q) : \vy \in \left(\ZZ_S^d+\frac \vp q\right)\sg \right\}.
\]

One can assign the probability measure $\nu_\vy$ on $X_q(\vy)$, for each $\vy$, which commutes with right multiplication by an element of $\UL_d(\QQ_S)$ (see Definition~\ref{definition of nu} and Remark~\ref{remark: prob. msr}).

\begin{proposition}\label{MS (7.25) GKY (3.6)} 
Let $d\ge 2$. Let $1\neq q\in \NN_S$ and $\vp \in \ZZ_S^d$ for which $\gcd(q,\vp)=1$. For any Borel measurable function $F:X_q\times \QQ_S^d \rightarrow \RR_{\ge 0}$, it follows that
\[
\frac 1 {I_q}\int_{X_q} \sum_{\vk\in \ZZ_S^d+\vp/q} F(\Gamma(q)\sg, \vk\sg)d\widetilde{\mu}_S(\sg)
=\int_{\QQ_S^d} \int_{X_q(\vy)} F(\Gamma(q)\sg, \vy) d\nu_{\vy}(\sg)d\vy,
\]
where $I_q=\widetilde{\mu}_S(X_q)$.
\end{proposition}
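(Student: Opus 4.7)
The proposition is a Fubini/disintegration identity: the left-hand side integrates $F$ over the ``universal'' space of pairs (affine lattice in $X_q$, marked point of the lattice), and the right-hand side decomposes this integral by first fixing the marked point $\vy$ and then integrating over those lattices passing through $\vy$. The strategy is therefore to realize both sides as integrals over a common total space and invoke Fubini, using the first moment formula (Theorem~\ref{moment theorems}(a)) to fix the normalizations.

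Concretely, I would introduce
$$\mathcal W \;=\; \Gamma(q) \backslash \bigl\{ (\sg, \vk) : \sg \in \UL_d(\QQ_S),\ \vk \in \ZZ_S^d + \vp/q \bigr\},$$
where $\gamma \in \Gamma(q)$ acts by $\gamma \cdot (\sg, \vk) = (\gamma \sg, \vk)$; this is well-defined since $\gamma \equiv \Id \bmod q$ fixes $\vk$ and preserves the coset $\ZZ_S^d + \vp/q$. Equip $\mathcal W$ with the product of $\widetilde\mu_S$ and counting measure $\#$ on $\vk$. The left-hand side of the proposition is then exactly $\frac{1}{I_q}\int_{\mathcal W} F(\Gamma(q)\sg, \vk\sg)\, d(\widetilde\mu_S \otimes \#)$. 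Next, consider the evaluation map $\mathrm{ev}\colon \mathcal W \to \QQ_S^d$, $(\sg, \vk) \mapsto \vk\sg$. For a nonzero $\vy$, the fiber $\mathrm{ev}^{-1}(\vy)$ is in canonical bijection with $X_q(\vy)$ via projection to the first coordinate (the second being determined by $\vk = \vy\sg^{-1}$). Taking $\nu_\vy$ from Definition~\ref{definition of nu} --- constructed as a suitably normalized Haar measure on $\mathrm{Stab}_{\UL_d(\QQ_S)}(\vy)$ and transported $\UL_d(\QQ_S)$-equivariantly across the transitive action on $\prod_{p\in S}(\QQ_p^d - \{\origin\})$ --- one gets a measurable family of probability measures on these fibers, yielding the disintegration $\widetilde\mu_S \otimes \# \;=\; \int_{\QQ_S^d} \nu_\vy\, d\vy$ on $\mathcal W$.

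To anchor the base measure on $\QQ_S^d$ as Lebesgue and each $\nu_\vy$ as a probability measure, I would specialize to $F(\Gamma(q)\sg, \vy) = f(\vy)$, independent of the first coordinate. Then the identity to be proved becomes
$$\frac{1}{I_q}\int_{X_q}\sum_{\vk \in \ZZ_S^d + \vp/q} f(\vk\sg)\, d\widetilde\mu_S(\sg) \;=\; \int_{\QQ_S^d} f(\vy)\, d\vy,$$
which is exactly Theorem~\ref{moment theorems}(a). This forces $\mathrm{ev}_*(\widetilde\mu_S \otimes \#) = I_q \cdot d\vy$ and $\nu_\vy(X_q(\vy)) = 1$ almost everywhere. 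The case of general Borel $F \geq 0$ then follows by monotone convergence and approximation by simple functions $\sum c_i \mathbf 1_{A_i \times B_i}$, for which the identity reduces to the product case treated above.

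The main obstacle is the rigorous definition and measurability of $\vy \mapsto \nu_\vy$, which in the $S$-arithmetic setting requires a simultaneous construction at the archimedean and $p$-adic places: at each place one chooses a Haar measure on the stabilizer subgroup (an affine-type block subgroup of $\UL_d(\QQ_p)$), with $p$-adic normalization compatible with $\vol_p(\ZZ_p^d) = 1$, and then one patches together and transports by $\UL_d(\QQ_S)$-equivariance. Here the block decomposition underlying Proposition~\ref{Def Vol} is exactly what is needed. Once Definition~\ref{definition of nu} provides the correct object, the identity in the present proposition is essentially a formal consequence of disintegration and the first moment formula, as sketched above.
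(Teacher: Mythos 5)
Your disintegration framework correctly identifies the fiber over $\vy$ of the evaluation map $(\sg,\vk)\mapsto \vk\sg$ with $X_q(\vy)$, and specializing to $F$ depending only on the second coordinate does pin down the pushforward measure as $I_q\, d\vy$ via Theorem~\ref{moment theorems}(a). But at the crucial step you assert, rather than prove, that the conditional measures in this disintegration coincide with the $\nu_\vy$ of Definition~\ref{definition of nu}. The difficulty is that $X_q(\vy)$ is not a single orbit of the stabilizer $\mathrm{Stab}(\vy)\cong \SH$: by Proposition~\ref{S-arithmetic part}(a) it is an infinite disjoint union of pieces $X_q(\vk_t, \vy)$ indexed by $t\in \NN_S$ with $\gcd(t,q)=1$, each of which is separately $\SH$-invariant under the right action. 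Consequently there is an infinite-parameter family of $\SH$-invariant probability measures on $X_q(\vy)$, one for each choice of nonnegative weights on the pieces summing to $1$. Equivariance plus total mass do not single out the particular weights (proportional to $1/t^d$ up to the index of $\Gamma(q)\cap\SH$) built into Definition~\ref{definition of nu}; your test functions $F(\Gamma(q)\sg,\vy)=f(\vy)$ see only the sum of the weights and hence cannot distinguish $\nu_\vy$ from other equivariant candidates, and the closing monotone-class step cannot repair this because the product-indicator identity it would rest on is precisely what is at issue.

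Closing the gap amounts to computing, for $F$ genuinely depending on the lattice coordinate, the contribution of each $t$-piece and matching it against the weight prescribed in Definition~\ref{definition of nu}. That is exactly what the paper's proof supplies: it reduces to indicators $F = \ind_{\mathcal U}\,\ind_W$, uses a set of representatives $R_t$ for $(\Gamma(q)\cap \sg_t^{-1}\SH\sg_t)\setminus\Gamma(q)$ to enumerate $\ZZ_S^d+\vp/q$ by the parameter $t$, and then applies the coordinate formula $\int_{\UL_d(\QQ_S)} f\,d\widetilde{\mu}_S = \frac{1}{\zeta_S(d)}\int_{\QQ_S^d}\int_{\SH} f(\sh\sg_\vy)\,d\widetilde{\mu}_\SH(\sh)\,d\vy$ (a consequence of Proposition~\ref{Def Vol} and Theorem~\ref{Com Vol}) to convert the left side of the proposition into the same iterated sum-integral one obtains on the right by unwinding Definition~\ref{definition of nu}. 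Your outline needs this explicit computation; the abstract disintegration theorem together with the first moment formula does not suffice to identify the disintegrating family with $\nu_\vy$.
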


\begin{proposition}\label{MS Proposition 7.6}
Let $f:\QQ_S^d\rightarrow \RR_{\ge 0}$ be a bounded compactly supported function and let $\vy \in \prod_{p\in S} (\QQ_S^d-\{\origin\})$. Then
\[
\int_{X_q(\vy)} \widehat{f}\left(\left(\ZZ_S^d+\frac \vp q \right)\sg\right)\nu_{\vy}(\sg)
=\int_{\QQ_S^d} f d\vv + \hspace{-0.1in} \sum_{\scriptsize \begin{array}{c}
t\in \NN_S\\
\gcd(t,q)=1\end{array}}\hspace{-0.1in}\frac 1 {t^d}
\sum_{\scriptsize \begin{array}{c}
a\in q\ZZ_S+t\\
\gcd(a,t)=1\end{array}}
f\left(\frac a t \vy\right).
\]
\end{proposition}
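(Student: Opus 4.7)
The plan is to decompose the Siegel sum by the $\QQ$-line through $\vy$. For each $\Gamma(q)\sg \in X_q(\vy)$, let $\vk_0 = \vk_0(\sg) := \vy\sg^{-1}$ be the unique element of $\ZZ_S^d + \vp/q$ with $\vk_0\sg = \vy$; writing $\vk_0 = \vp/q + \vn_0$ with $\vn_0 \in \ZZ_S^d$, every element of the affine lattice has the form $\vk_0 + \vm$ for some $\vm \in \ZZ_S^d$, so
\[
\widehat f\bigl((\ZZ_S^d + \vp/q)\sg\bigr) = \sum_{\vm \in \ZZ_S^d} f(\vy + \vm\sg).
\]
I will split this sum according to whether the vector $\vy + \vm\sg$ lies on the rational line $\QQ\vy$ or not.

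For the \emph{collinear} terms, the relation $\vy + \vm\sg = (a/t)\vy$ with $t \in \NN_S$, $a \in \ZZ_S$, $\gcd(a,t)=1$, rearranges to $\vm = \tfrac{a-t}{t}\vk_0$. Clearing denominators, the membership condition $\vm \in \ZZ_S^d$ becomes the congruence $(a-t)\vp + aq\vn_0 \equiv 0 \pmod{tq}$ in $\ZZ_S^d$. Analyzing this modulo $q$ uses $\gcd(\vp,q)=1$ to yield $a \equiv t \pmod q$; analyzing it modulo $t$, $\gcd(a,t)=1$ inverts $a$ and $\gcd(q,t)=1$ (itself forced by combining $\gcd(a,t)=1$ with $a \equiv t \pmod q$) inverts $q$, yielding $\vn_0 \equiv -q^{-1}\vp \pmod t$ in $\ZZ_S^d$. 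Under the conditional measure $\nu_\vy$ the residue class $\vn_0 \bmod t$ is uniformly distributed in $(\ZZ_S/t\ZZ_S)^d \cong (\ZZ/t\ZZ)^d$, so this $t$-congruence holds with probability $t^{-d}$. Summing $f((a/t)\vy)$ over valid pairs $(a,t)$ weighted by $t^{-d}$ reproduces the arithmetic tail of the claimed formula.

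For the \emph{generic} terms---those $\vm$ with $\vy + \vm\sg \notin \QQ\vy$---I would apply the disintegration formula Proposition~\ref{MS (7.25) GKY (3.6)} to the test function $F(\Gamma(q)\sg, \vv) = f(\vv)$ supported on $\vv \notin \QQ\vy$, and unfold through the fibration $X_q(\vy) \to (\ZZ_S^d + \vp/q)/\Gamma(q)$. The key observation is that the complement of $\QQ\vy$ in $\QQ_S^d$ is essentially a single orbit of the stabilizer $\mathrm{Stab}_{\UL_d(\QQ_S)}(\vy) \cong \QQ_S^{d-1} \rtimes \UL_{d-1}(\QQ_S)$, and the pushforward of $\nu_\vy$ on this orbit matches the Lebesgue measure $d\vv$. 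Hence the generic terms together contribute exactly $\int_{\QQ_S^d} f\, d\vv$, which is the main term.

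The main obstacle will be this final matching of normalizations: ensuring the generic contribution is \emph{precisely} $\int f\, d\vv$, with no extraneous multiplicative factor or missing orbit, requires a careful change of variables built on the Iwasawa-type parameterization of $\UL_d(\QQ_S)$ developed in Section~\ref{Subsect: volume}---essentially the same bookkeeping that underlies the volume computation in Theorem~\ref{Com Vol}. Combining the generic main term with the collinear arithmetic tail yields the proposition.
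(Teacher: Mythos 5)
Your overall decomposition of the Siegel sum along the rational line $\QQ\vy$ is the right geometric picture, and it leads to the correct answer, but both halves of the argument have real gaps as written.

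For the collinear contribution, the key assertion is that the $\nu_\vy$-probability of the congruence is $t^{-d}$ because ``$\vn_0 \bmod t$ is uniformly distributed.'' But $\vn_0 = \vy\sg^{-1} - \vp/q$ is not a well-defined function on $X_q(\vy)$: if you replace the representative $\sg$ by $\gamma\sg$ with $\gamma \in \Gamma(q)$, the vector $\vk_0 = \vy\sg^{-1}$ is replaced by $\vk_0\gamma^{-1}$, and the resulting change in $\vn_0$ is $q^{-1}(q\vk_0)(\gamma^{-1}-\Id)$, which lies in $\ZZ_S^d$ but is not in general $\equiv 0 \pmod t$. So ``the residue class of $\vn_0 \bmod t$'' is not a random variable on $X_q(\vy)$. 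What \emph{is} well-defined is the divisibility condition $t \mid t_0$, where $t_0 := \gcd(q\vk_0)$ is invariant under the $\Gamma(q)$-action, and one checks that the event $\{\vm\sg + \vy = (a/t)\vy \text{ is a lattice point}\}$ coincides (given $a \equiv t \bmod q$, $\gcd(a,t)=1$) with $\{t \mid t_0\}$. Computing $\nu_\vy(\{t \mid t_0\})$ then requires the explicit decomposition $X_q(\vy) = \bigsqcup_{t_0} X_q(\vk_{t_0},\vy)$ from Proposition~\ref{S-arithmetic part} together with the scaling $(\Phi_{t_0})_*\widetilde{\mu}_\SH = (t_0/q)^d\widetilde{\mu}_\SH$, which gives $\nu_\vy(X_q(\vk_{t_0},\vy)) \propto t_0^{-d}$ and hence $\nu_\vy(\{t \mid t_0\}) = t^{-d}$ after using the normalization identity $q^{2d-1}I_q^{(d-1)}(\sum_{\gcd(\tau,q)=1}\tau^{-d}) = I_q\zeta_S(d)$. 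This is the content you are asserting, but it is precisely what has to be proved, and the ``uniformity mod $t$'' phrasing does not survive the representative-dependence.

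For the generic contribution, Proposition~\ref{MS (7.25) GKY (3.6)} is the wrong tool: it expresses an integral over the \emph{full} space $X_q$ as a double integral involving the fibers $X_q(\vy)$, so it cannot by itself evaluate an integral over a single fiber $X_q(\vy)$ — plugging $F(\Gamma(q)\sg,\vv) = f(\vv)\mathbf 1[\vv \notin \QQ\vy]$ into it just reproduces $\int f\,d\vv$ on both sides tautologically, since $\QQ\vy$ is a null set, and says nothing about the sum over generic $\vm$ on a fixed fiber. The step you actually need is the paper's Lemma~\ref{MS Proposition 7.7}: after pushing each piece $X_q(\vk_t,\vy)$ to $(\Gamma(q)\cap\SH)\setminus\SH$ via $\Phi_t$, one applies a Siegel/Rogers first-moment formula on the stabilizer quotient, which produces simultaneously the main term $\int f\,d\vv$ (your generic part) and the collinear sum over $\ell$. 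So while you describe the correct qualitative picture — the stabilizer orbit complement of $\QQ\vy$ carries Lebesgue measure under the pushforward — the precise mechanism is exactly Lemma~\ref{MS Proposition 7.7} applied piecewise across the $X_q(\vk_t,\vy)$, not Proposition~\ref{MS (7.25) GKY (3.6)}. Once that substitution is made, your decomposition becomes essentially the paper's proof reorganized, rather than an independent route.
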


\begin{proof}[Proof of Theorem~\ref{moment theorems} (b)]
For a given bounded compactly supported function $f:\QQ_S^d\rightarrow \RR_{\ge 0}$, define $$F\left((\ZZ_S^d+\vp/q)\sg, \vy\right):=f(\vy)\widehat{f}((\ZZ_S^d+\vp/q)\sg).$$ Note that the function $F$ can be considered as a function on $Y_{\vp/q}\times \QQ_S^d$ as well as on $X_q\times \QQ_S^d$.

Applying Proposition~\ref{MS (7.25) GKY (3.6)} and Proposition~\ref{MS Proposition 7.6},
\[\begin{split}
&\frac 1 {J_q} \int_{Y_{\vp/q}} \widehat{f}\left(\left(\ZZ_S^d+\frac \vp q\right)\sg\right)^2 d\widetilde{\mu}_S(\sg)\\
&\hspace{0.2in}=\frac 1 {I_q} \int_{X_q} \sum_{\vv \in \ZZ_S^d}f\left(\left(\vv+\frac \vp q\right) \sg\right)\widehat{f}\left(\left(\ZZ_S^d+\frac \vp q\right)\sg\right)d\widetilde{\mu}(\sg)
\\
&\hspace{0.2in}=\int_{\QQ_S^d} f(\vy) \int_{X_q(\vy)} \widehat f\left(\left(\ZZ_S^d+\frac \vp q\right)\sg\right)d\nu_{\vy}(\sg)d\vy\\
&\hspace{0.2in}=\int_{\QQ_S^d} f(\vy) \left(\int_{\QQ_S^d} f(\vx) d\vx 
+ \sum_{\scriptsize \begin{array}{c}
t\in \NN_S \\
\gcd(t,q)=1\end{array}} \frac 1 {t^d} \sum_{\scriptsize \begin{array}{c}
a\in q\ZZ_S+t\\
\gcd(a,t)=1\end{array}} f\left(\frac a t \vy \right)\right) d\vy\\
&\hspace{0.2in}=\left(\int_{\QQ_S^d} f d\vv \right)^2
+\sum_{\scriptsize \begin{array}{c}
t\in \NN_S\\
\gcd(t,q)=1\end{array}}\sum_{\scriptsize\begin{array}{c}
a\in q\ZZ_S+t\\
\gcd(a,t)=1\end{array}} \frac 1 {t^d} \int_{\QQ_S^d} f(\vy) f\left(\frac a t \vy\right) d\vy \\
&\hspace{0.2in}=\left(\int_{\QQ_S^d} f d\vv \right)^2
+\sum_{\scriptsize \begin{array}{c}
t\in \NN_S\\
\gcd(t,q)=1\end{array}}\sum_{\scriptsize\begin{array}{c}
a\in q\ZZ_S+t\\
\gcd(a,t)=1\end{array}}\int_{\QQ_S^d} f(t\vy) f\left(a \vy\right) d\vy.
\end{split}\]
\end{proof}

\subsection{The Spaces $X_q$ and $X_q(\vy)$}\label{MS Section 7-2}
This subsection is a generalization of the part of Section 7 in \cite{MS2010} to $S$-arithmetic spaces.

Note that by definition, $\UL_d(\ZZ_S)=\SL_d(\ZZ_S)$. 
For $q\in \NN_S$, let us define
$$\pi_q:\UL_d(\ZZ_S)\rightarrow \UL_d(\ZZ_S/q\ZZ_S)\simeq \SL_d(\ZZ/q\ZZ)$$
by $\pi_q\left(g=(g_{ij})\right)=([g_{ij}])$, where $[g_{ij}]=g_{ij}+q\ZZ_S\in \ZZ_S/q\ZZ_S$.

Let $\SH$ be the subgroup of $\UL_d(\QQ_S)$ given as in \eqref{dual AUL}.

\begin{proposition}\label{S-arithmetic part} 
Let $1\neq q\in \NN_S$ and $\vp\in \ZZ_S^d$ with $\gcd(q,\vp)=1$. 
There is a subset $\left\{\vk_t \in \ZZ_S^d+\vp/q: t\in \NN_S,\; \gcd(t,q)=1\right\}$ satisfying the following:
\begin{enumerate}[(a)]
\item For any $\vy\in \prod_{p\in S} (\QQ_p^d-\{\origin\})$, 
\[
X_q(\vy)=\bigsqcup_{\scriptsize \begin{array}{c}
t\in \NN_S\\
\gcd(t,q)=1\end{array}} \left\{\Gamma(q)\sg \in \Gamma(q)\setminus\UL_d(\QQ_S): \vk_t\:\sg=\vy \right\}.
\]
\item For $\vk,\; \vy\in \QQ_S^d$, define the set
\[X_q(\vk,\vy):=\left\{\Gamma(q)\sg\in \Gamma(q)\setminus\UL_d(\QQ_S) : \vk\:\sg=\vy \right\}.\] 
Fix any $\sg_\vk$ and $\sg_\vy$ in $\SL_d(\QQ_S)$ for which $\vk=\ve_1\:\sg_\vk$ and $\vy=\ve_1\:\sg_\vy$. Then
\[\begin{split}
X_q(\vk, \vy)
&=\Gamma(q) \sg_\vk^{-1}\left((\sg_\vk \Gamma(q) \sg_\vk^{-1}\cap \SH)\setminus \SH \right)\sg_\vy\\
&\simeq (\sg_\vk \Gamma(q) \sg_\vk^{-1}\cap \SH)\setminus \SH.
\end{split}\]
\end{enumerate}
\end{proposition}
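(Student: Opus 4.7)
The plan is to prove (b) first as a direct orbit-stabilizer calculation, and then deduce (a) by classifying the $\Gamma(q)$-orbits on $\ZZ_S^d + \vp/q$ via the ``content'' of a vector. For (b), fix $\sg_\vk, \sg_\vy \in \SL_d(\QQ_S)$ with $\ve_1 \sg_\vk = \vk$ and $\ve_1\sg_\vy = \vy$. Since $\SH$ from \eqref{dual AUL} is precisely the right-stabilizer of $\ve_1$ in $\UL_d(\QQ_S)$, the equation $\vk\sg = \vy$ is equivalent to $\sg_\vk\sg\sg_\vy^{-1} \in \SH$, i.e.\ $\sg \in \sg_\vk^{-1}\SH\sg_\vy$. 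Reducing modulo $\Gamma(q)$ on the left, two elements $\sg_\vk^{-1}\sh_1\sg_\vy$ and $\sg_\vk^{-1}\sh_2\sg_\vy$ define the same coset iff $\sh_2\sh_1^{-1} \in \sg_\vk\Gamma(q)\sg_\vk^{-1} \cap \SH$, yielding the claimed identification.

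For (a), observe that any lift $\sg$ of $\Gamma(q)\sg \in X_q(\vy)$ uniquely determines $\vk = \vy\sg^{-1} \in \ZZ_S^d + \vp/q$, and replacing $\sg$ by $\gamma\sg$ sends $\vk$ to $\vk\gamma^{-1}$. Because $\gamma \equiv \Id \pmod q$ forces $(\vp/q)(\gamma^{-1}-\Id) \in \ZZ_S^d$, the right $\Gamma(q)$-action on $\QQ_S^d$ preserves $\ZZ_S^d + \vp/q$, so $X_q(\vy)$ decomposes according to the $\Gamma(q)$-orbits on $\ZZ_S^d + \vp/q$. Every $\vk \in \ZZ_S^d + \vp/q$ admits a unique factorization $q\vk = t\vk'$ with $t \in \NN_S$ and $\vk' \in \Prim{\ZZ_S^d}$; the congruence $t\vk' \equiv \vp \pmod q$ combined with $\gcd(\vp,q) = 1$ forces $\gcd(t,q) = 1$, and both $t$ and the residue class $\vk' \bmod q$ are $\Gamma(q)$-invariants of the orbit.

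The main step, and the step I expect to be the principal obstacle, is to show that for each admissible $t$ there is exactly \emph{one} orbit: equivalently, $\Gamma(q)$ acts transitively on $\{\vw \in \Prim{\ZZ_S^d} : \vw \equiv \vw_0 \pmod q\}$ for every primitive $\vw_0 \in (\ZZ/q\ZZ)^d$. Given $\vw$ and $\vw' = \vw + q\vu$ in $\Prim{\ZZ_S^d}$, I would construct $\gamma = \Id + q\delta \in \Gamma(q)$ with $\vw\delta = \vu$ using a rank-one ansatz $\delta = \tp\va\,\vu$ in the generic case when $\vw$ and $\vu$ are $\QQ$-linearly independent (the primitivity of $\vw$ lets one pick $\va \in \ZZ_S^d$ with $\vw\cdot\va = 1$ and $\vu\cdot\va = 0$, so that by the matrix determinant lemma $\det(\Id+q\delta) = 1$), and a diagonal-scaling argument in the degenerate case $\vu = c\vw$ (where $1 + qc \in \ZZ_S^\times$ is forced by primitivity of $\vw'$, so $\vw \mapsto (1+qc)\vw$ can be compensated on another basis vector to produce an element of $\Gamma(q)$). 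The one $S$-arithmetic subtlety is that $\ZZ_S = \ZZ[1/p_1,\ldots,1/p_s]$ has many units, which enters precisely in the degenerate case. Once transitivity is in hand, picking any representative $\vk_t = (t/q)\vk'_t$, with $\vk'_t$ a primitive lift of $t^{-1}\vp \pmod q$, yields the disjoint decomposition in (a).
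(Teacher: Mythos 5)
Your part (b) and the overall architecture of your part (a) --- decomposing $X_q(\vy)$ along the $\Gamma(q)$-orbits of $\ZZ_S^d+\vp/q$ and classifying orbits by the content $t$ with $q\vk\in t\,\Prim{\ZZ_S^d}$ --- agree with the paper. The gap is exactly in the step you single out as the main one, the claim that each admissible $t$ gives a single orbit, and specifically in your generic-case construction. You assert that primitivity of $\vw$ lets you choose $\va\in\ZZ_S^d$ with both $\vw\cdot\va=1$ and $\vu\cdot\va=0$; this is false in general. Take $\ZZ_S=\ZZ[1/2]$, $q=7$, $d=3$, $\vw=(1,0,0)$ and $\vw'=(8,21,21)$: both are primitive, $\vw'\equiv\vw \bmod 7$, and $\vu=(1,3,3)$ is $\QQ$-independent of $\vw$, so this pair lies in your generic case. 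But $\vw\cdot\va=1$ forces $a_1=1$, and then $\vu\cdot\va=0$ forces $a_2+a_3=-1/3\notin\ZZ[1/2]$; the same obstruction appears if you swap the roles of $\vw$ and $\vw'$. The obstruction is the gcd of the $2\times 2$ minors of the matrix with rows $\vw,\vu$ (here $3$), which need not be a unit even when $\vw$ and $\vw'$ are both primitive, so a single congruence element of the rank-one form $\Id+q\delta$, $\delta$ a column times the row $\vu$, need not exist. The transitivity statement is true, but proving it requires a further idea: the paper writes $q\vk_i=t\,\ve_1\gamma_i$ with $\gamma_i\in\SL_d(\ZZ_S)$, notes that $\gamma_1\gamma_2^{-1}$ has first row $\equiv\ve_1\bmod q$, and then corrects it by an element of $\Gamma(q)$ using $\SL_d(\ZZ_S)/\Gamma(q)\simeq\SL_d(\ZZ_S/q\ZZ_S)$ together with the surjectivity of $\SL_{d-1}(\ZZ)\to\SL_{d-1}(\ZZ/q\ZZ)$ (Shimura's lifting lemma), concluding $\ve_1\gamma_1\gamma_0'=\ve_1\gamma_2$ for some $\gamma_0'\in\Gamma(q)$. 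To salvage your construction you would need either such a congruence-lifting step or a preliminary $\Gamma(q)$-move making the minor gcd a unit (or a chain of intermediate vectors), none of which is supplied; your degenerate case $\vu=c\vw$ is fine as sketched.

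A smaller omission: at the end you take $\vk_t=(t/q)\vk_t'$ with $\vk_t'$ ``a primitive lift of $t^{-1}\vp \bmod q$,'' but the existence of a primitive vector of $\ZZ_S^d$ in that residue class is not automatic and needs an argument; the paper produces one by passing to a primitive integral representative $\pp\vp$ ($\pp\in\PP_S$) and applying Dirichlet's theorem on primes in arithmetic progressions to the coset $t^*\pp\vp+q\ZZ^d$. This is routine to fix, but as written both the realization of each $t$ and, more seriously, the one-orbit-per-$t$ claim are unproved.
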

\begin{proof}
(a) We first fix an element $\vy\in \prod_{p\in S} \left(\QQ_p^d-\{\origin\}\right)$. It is easily seen that
\[
\left\{\begin{array}{cl}
X_q(\vk_1, \vy)=X_q(\vk_2, \vy), &\text{if }\vk_1\Gamma(q)=\vk_2\Gamma(q);\\
X_q(\vk_1, \vy)\cap X_q(\vk_2, \vy)=\emptyset, &\text{otherwise.}
\end{array}\right.
\]
Hence
\[
X_q(\vy)=\bigsqcup_{\scriptsize \begin{array}{c}
\vk\Gamma(q)\\
\vk\in \ZZ_S^d+\vp/q\end{array}} X_q(\vk, \vy).
\]

For $\vk \in \ZZ_S^d +\vp/q$, consider $t_\vk:=\gcd(q\vk) \in \NN_S$, i.e., $t_\vk$ is defined to be the unique integer in $\NN_S$ for which $q\vk \in t_\vk\Prim{\ZZ_S^d}$. Since $\gcd(q\vk, q)=\gcd(\vp,q)=1$, it follows that $\gcd(t_\vk,q)=1$. It is obvious that 
\[t_{\vk_1}=t_{\vk_2} \;\text{if}\; \vk_1\Gamma(q)=\vk_2\Gamma(q)
\] 
(in fact, if $\vk_1\UL_d(\ZZ_S)=\vk_2\UL_d(\ZZ_S)$). 

Conversely, we claim that 
\begin{enumerate}[(i)]
\item for each $t\in \NN_S$ such that $\gcd(t,q)=1$, there is $\vk\in \ZZ_S^d+\vp/q$ with $\gcd(q\vk)=t$ and 
\item if $t_{\vk_1}=t_{\vk_2}$, then $\vk_1\Gamma(q)=\vk_2\Gamma(q)$.
\end{enumerate}

\vspace{0.1in}
\noindent (i) For such $t\in \NN_S$, since $\gcd(t,q)=1$, there is $t^*\in \ZZ$ such that $tt^*\equiv 1\mod q$. Pick $\pp\in \PP_S$ such that $\pp\vp\in \ZZ^d$ and $\pp\vp$ is a primitive vector in $\ZZ^d$.
Since $\gcd(q, t^*\pp\vp)=1$, by Dirichlet's theorem on arithmetic progressions, there exists a primitive vector $\vm$ in $t^*\pp\vp+q\ZZ^d$, hence if we let $\vk:=\frac t \pp \vm$, then $\vk \in \ZZ_S^d+\vp/q$ and $\gcd(q\vk)=t$.

\noindent (ii) Let $t=t_{\vk_1}=t_{\vk_2}$. Choose any $\gamma_i\in \SL_d(\ZZ_S)$, $i=1,2$, for which $q\vk_i=t\ve_i\:\gamma_i$. Then since $\vk_1\equiv \vk_2 \mod q$, we see that $\ve_1\:\gamma_1\gamma_2^{-1}\equiv \ve_1 \mod q$ so that $\gamma_1\gamma_2^{-1}$ is of the form
\[
\gamma_1\gamma_2^{-1}=\left(\begin{array}{cc}
x_1 &\vx' \\
\tp\vv & A'
\end{array}\right),
\]
where $x_1\in 1+q\ZZ_S$ and $\vx'\in q\ZZ_S^{d-1}$.
Since $\det \sg'\in 1+q\ZZ_S$, there is $\sg'_1\in \mathfrak M_{d-1}(\{0,1,\ldots, q-1\})$ such that $\sg'_1+\mathfrak M_{d-1}(q\ZZ_S)=\sg'+\mathfrak M_{d-1}(q\ZZ_S)$ and $\det \sg'_1\in (1+q\ZZ_S) \cap \ZZ=1+q\ZZ$.
Hence by \cite[p. 21]{Sim1971}, there is $\sg'_2\in \SL_{d-1}(\ZZ)$ for which $\sg'_2\equiv \sg'_1\mod q$, so that $$\sg' +\mathfrak M_{d-1}(q\ZZ_S)=\sg'_1 +\mathfrak M_{d-1}(q\ZZ_S)= \sg'_2 +\mathfrak M_{d-1}(q\ZZ_S).$$
Since $\SL_d(\ZZ_S/q\ZZ_S)\simeq \SL_d(\ZZ_S)/\Gamma(q)$ and 
$$\gamma_1\gamma_2^{-1}=\left(\begin{array}{cc}
1 & 0 \\
\tp \vv' & \sg'_2 \end{array}\right) \mod q,$$
it follows that there is $\gamma_0\in \Gamma(q)$ such that $\gamma_1\gamma_2^{-1}\gamma_0=\left(\begin{array}{cc}
1 & 0 \\
\tp \vv' & \sg'_2 \end{array}\right)$ or $\gamma_1\gamma'_0\gamma_2^{-1}=\left(\begin{array}{cc}
1 & 0 \\
\tp \vv' & \sg'_2 \end{array}\right)$ for some $\gamma'_0\in \Gamma(q)$ since $\Gamma(q)$ is normal in $\UL_d(\ZZ_S)$.
Hence $\ve_1\gamma_1\gamma_0'=\ve_1\gamma_2,$ so that $\vk_1\Gamma(q)=\vk_2\Gamma(q)$.
Therefore, one can choose $\vk_t\in \ZZ_S^d+\vp/q$, for each $t\in \NN_S$ with $\gcd(t,q)=1$, so that 
$$X_q(\vy)=\bigsqcup_{t\in \NN_S, \gcd(t,q)=1} X_q(\vk_t, \vy).$$

For arbitrary $\vy'\in {\prod_{p\in S}(\QQ_p^d-\left\{\origin\right\})}$, the property (a) also holds for $\{\vk_t\}$ since $X_q(\vk_t, \vy')=X_q(\vk_t,\vy)\sg_{\vy'}$, where $\sg_{\vy'}\in \UL_d(\QQ_S)$ for some $\vy'=\vy\:\sg_{\vy'}$.

\vspace{0.1in}
\noindent (b) This follows in a straightforward fashion from the definitions of $\sg_{\vk}$, $\sg_{\vy}$ and $X_q(\vk,\vy)$.
\end{proof}

\begin{definition}\label{definition of nu}
Let us assign the measure $\nu_{\vy}$ on $X_q(\vy)$ by assigning on each $X_q(\vk_t, \vy)$, the pull-back measure of $\frac 1 {I_q \zeta_S(d)} \widetilde{\mu}_\SH$ on $(\sg_t \Gamma(q) \sg_t^{-1} \cap \SH)\setminus \SH$, where $\sg_t=\sg_{\vk_t}$.
Here, $\widetilde{\mu}_\SH$ is the product measure of the volume measure on $\QQ_S^{d-1}$ and $\widetilde{\mu}_S^{(d-1)}$ on $\UL_{d-1}(\QQ_S)$ such that $\widetilde{\mu}_S^{(d-1)}\left(\SL_{d-1}(\ZZ_S)\setminus\SL_{d-1}(\QQ_S)\right)=1$. 
\end{definition}
If we denote by $R_\sg:X_q \rightarrow X_q$ the map given by right multiplication by $\sg$, it holds that $(R_\sg)_*\nu_\vy=\nu_{\vy\sg}$ for any $\sg \in \UL_d(\QQ_S)$ and $\vy\in \prod_{p\in S}(\QQ_p^d-\{\origin\})$.

\begin{lemma}\label{MS Proposition 7.7}
Let $d\ge 3$. Consider a bounded measurable function $f:\QQ_S^d \rightarrow \RR$ of compact support. For $\eta=(\eta_1, \eta')$ with $\eta_1\in \QQ_S$ and $\eta'\in \ZZ_S^{d-1}$, we have
\[\begin{split}
&\int_{(\Gamma(q)\cap \SH)\setminus \SH}
\widehat{f}\left(\left(\ZZ_S^d + \eta\right)\sh\right)d\widetilde{\mu}_\SH(\sh)\\
&\hspace{1in}= q^{d-1} I_q^{(d-1)}\left(\sum_{\ell\in \ZZ_S} f\left((\ell+\eta_1)\ve_1\right)+ \int_{\QQ_S^d} f d\vv \right),
\end{split}\]
where $I_q^{(d-1)}=\#\UL_{d-1}(\ZZ_S/q\ZZ_S)$.
\end{lemma}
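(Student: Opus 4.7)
The plan is to exploit a stabilizer-index symmetry, reducing the computation to the classical Siegel mean value formula on $\SL_{d-1}(\ZZ_S)\setminus\SL_{d-1}(\QQ_S)$.

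Since $\eta'\in\ZZ_S^{d-1}$, one has $\ZZ_S^d+\eta=\ZZ_S^d+\eta_1\ve_1$, so we may assume $\eta=\eta_1\ve_1$. Every $\sh\in\SH$ fixes $\ve_1$, and any $\sh_0\in\SH_{\ZZ_S}:=\SH\cap\SL_d(\ZZ_S)$ also preserves $\ZZ_S^d$; hence $(\ZZ_S^d+\eta_1\ve_1)\sh_0=\ZZ_S^d+\eta_1\ve_1$, so $\widehat{f}((\ZZ_S^d+\eta)\sh)$ is $\SH_{\ZZ_S}$-invariant. Using $\SH\cong\QQ_S^{d-1}\rtimes\SL_{d-1}(\QQ_S)$, the subgroup $\Gamma(q)\cap\SH$ corresponds to $q\ZZ_S^{d-1}\rtimes\Gamma^{(d-1)}(q)$, where $\Gamma^{(d-1)}(q):=\{\sg'\in\SL_{d-1}(\ZZ_S):\sg'\equiv\Id\pmod q\}$, whence
\[
[\SH_{\ZZ_S}:\Gamma(q)\cap\SH]=[\ZZ_S^{d-1}:q\ZZ_S^{d-1}]\cdot [\SL_{d-1}(\ZZ_S):\Gamma^{(d-1)}(q)]=q^{d-1}I_q^{(d-1)}.
\]
Since $\SH_{\ZZ_S}\setminus\SH$ has $\widetilde{\mu}_\SH$-measure $1$, the left-hand side of the lemma equals $q^{d-1}I_q^{(d-1)}\cdot C$, where
\[
C:=\int_{\SH_{\ZZ_S}\setminus\SH}\widehat{f}\bigl((\ZZ_S^d+\eta_1\ve_1)\sh\bigr)\,d\widetilde{\mu}_\SH(\sh).
\]

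Taking a fundamental domain $D_\vu\times D_\sg^{(0)}$ for $\SH_{\ZZ_S}\setminus\SH$ (where $D_\vu$ and $D_\sg^{(0)}$ are unit-measure fundamental domains for $\ZZ_S^{d-1}$ in $\QQ_S^{d-1}$ and for $\SL_{d-1}(\ZZ_S)$ in $\SL_{d-1}(\QQ_S)$ respectively), expand
\[
\widehat{f}\bigl((\ZZ_S^d+\eta_1\ve_1)\sh\bigr)=\sum_{\ell\in\ZZ_S,\,\vn\in\ZZ_S^{d-1}}f\bigl((\ell+\eta_1+\vn\tp\vv',\vn\sg')\bigr).
\]
The $\vn=0$ contribution is constant in $\sh$, equal to $\sum_{\ell}f((\ell+\eta_1)\ve_1)$, and integrates to itself. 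For $\vn\ne 0$, set $P(c,\vw):=\sum_{\ell\in\ZZ_S}f((\ell+c,\vw))$, which is $\ZZ_S$-periodic in $c$, and compute for fixed $\sg'$:
\[
\int_{D_\vu}P\bigl(\eta_1+\vn\tp\vv',\vn\sg'\bigr)\,d\vv'=\frac{1}{d_\vn}\int_{d_\vn\ZZ_S\setminus\QQ_S}P(\eta_1+T,\vn\sg')\,dT=\int_{\QQ_S}f\bigl((T,\vn\sg')\bigr)\,dT,
\]
where $d_\vn\in\NN_S$ is the unique generator in $\NN_S$ of the principal ideal $\sum_i n_i\ZZ_S\subseteq\ZZ_S$. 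The first equality follows by fiber-integrating along the $\QQ_S$-linear functional $L_\vn(\vv'):=\vn\tp\vv'$: since $L_\vn(\ZZ_S^{d-1})=d_\vn\ZZ_S$, the induced map on the unit-measure torus $D_\vu\to d_\vn\ZZ_S\setminus\QQ_S$ is a surjective homomorphism with fibers of uniform measure $1/d_\vn$. The second equality combines $\ZZ_S$-periodicity of $P$ (giving a factor $[\ZZ_S:d_\vn\ZZ_S]=d_\vn$) with the standard unfolding $\int_{\ZZ_S\setminus\QQ_S}P(\cdot,\vw)\,dT=\int_{\QQ_S}f((T,\vw))\,dT$; in particular, the answer is independent of $\eta_1$.

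Summing over $\vn\in\ZZ_S^{d-1}\setminus\{0\}$ and integrating over $\sg'\in D_\sg^{(0)}$ produces
\[
\int_{D_\sg^{(0)}}\sum_{\vn\ne 0}F_0(\vn\sg')\,d\widetilde{\mu}_S^{(d-1)}(\sg')=\int_{\QQ_S^{d-1}}F_0(\vw)\,d\vw=\int_{\QQ_S^d}f\,d\vv,
\]
where $F_0(\vw):=\int_{\QQ_S}f((T,\vw))\,dT$ is bounded and compactly supported on $\QQ_S^{d-1}$, and the first equality is the classical Siegel mean value formula on $\SL_{d-1}(\ZZ_S)\setminus\SL_{d-1}(\QQ_S)$ (available from \cite{Han2021}, or equivalently Theorem~\ref{moment theorems}(a) in dimension $d-1$ with $q=1$). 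Combining the $\vn=0$ and $\vn\ne 0$ contributions yields $C=\sum_{\ell}f((\ell+\eta_1)\ve_1)+\int f\,d\vv$, and the lemma follows. The main technical point will be the fiber-measure identification $1/d_\vn$ for $L_\vn$ on the unit-measure $D_\vu$; with that in hand the argument is bookkeeping via the index formula and $\ZZ_S$-periodicity.
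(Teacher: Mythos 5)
Your proposal is correct and follows essentially the same route as the paper: split the lattice sum into the $\vn=0$ and $\vn\neq 0$ parts, integrate out the unipotent variable using $\ZZ_S$-periodicity of the periodized $f$ in the first coordinate, and finish with the Siegel mean value formula in dimension $d-1$; the only cosmetic difference is that you extract the index $q^{d-1}I_q^{(d-1)}$ at the outset by $\SH_{\ZZ_S}$-invariance, whereas the paper integrates directly over the congruence-level fundamental domain $(q\ZZ_S^{d-1}\setminus\QQ_S^{d-1})\times(\Gamma^{(d-1)}(q)\setminus\UL_{d-1}(\QQ_S))$ and picks up the factors $q^{d-1}$ and $I_q^{(d-1)}$ along the way. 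Your $1/d_\vn$ fiber computation is fine (and the $d_\vn$ factors cancel, exactly as in the paper's one-line evaluation of the $\vv'$-integral), so there is no gap.
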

\begin{proof}
Following the notation in Section \ref{Subsect: volume}, let us denote by $\sh=(\vv', \sg')$ an element of $\SH$, where $\vv'\in \QQ_S^{d-1}$ and $\sg'\in \UL_{d-1}(\QQ_S)$. 
Recall that ${(\Gamma(q)\cap \SH)\setminus \SH}\simeq (q\ZZ_S^{d-1}\setminus \QQ_S^{d-1})\times (\Gamma^{(d-1)}(q)\setminus \UL_{d-1}(\QQ_S))$. Let $\mathcal F'$ be a fundamental domain of $\Gamma^{(d-1)}(q)\setminus \UL_{d-1}(\QQ_S)$.

Define $f_1(x_1, \vx')=\sum_{\ell\in \ZZ_S} f(x_1+\ell, \vx')$. Since $\eta'\in \ZZ_S^{d-1}$,
\begin{equation}\begin{split}\label{eq 1:MS Proposition 7.7}
&\int_{(\Gamma(q)\cap \SH)\setminus \SH}
\widehat{f}\left(\left(\ZZ_S^d + \eta\right)\sh\right)d\widetilde{\mu}_\SH(\sh)\\
&=\int_{\mathcal F'}\int_{q\ZZ_S^{d-1}\setminus \QQ_S^{d-1}}
f_1(\eta_1\ve_1)\:d\widetilde{\mu}^{(d-1)}_S(\sg')d\vv'\\
&\hspace{0.1in}+\int_{\mathcal F'}\int_{q\ZZ_S^{d-1}\setminus \QQ_S^{d-1}}\sum_{\vm'\in \ZZ_S^{d-1}-\{\origin\}}
f_1(\eta_1+{\vm'}\:\tp\vv', \vm'\sg')\:d\widetilde{\mu}^{(d-1)}_S(\sg')d\vv'\\
&=q^{d-1}I_q^{(d-1)}f_1(\eta_1\ve_1)\\
&\hspace{0.1in}+\int_{\mathcal F'}\sum_{\vm'\in \ZZ_S^{d-1}-\{\origin\}}
\int_{q\ZZ_S^{d-1}\setminus \QQ_S^{d-1}} f_1(\eta_1+\vm'\:\tp \vv', \vm'\sg')\: d\vv' d\widetilde{\mu}_S^{(d-1)}(\sg').
\end{split}\end{equation}
Note that the integral above can be changed to $I_q^{(d-1)}$ times of the integral over $\UL_{d-1}(\ZZ_S)\setminus \UL_{d-1}(\QQ_S)$, since the function inside is invariant under the action of $\UL_{d-1}(\ZZ_S)$. 
Consider the function on $\QQ_S^{d-1}$ defined by
\[\begin{split}
\vx'\in \prod_{p\in S_f}(\QQ_p^{d-1}-\{\origin\}) &\mapsto 
\int_{q\ZZ_S^{d-1}\setminus \QQ_S^{d-1}} f_1(\eta_1+\vx'\:\tp \vv', \vx')d\vv'\\
&\hspace{0.2in}=q^{d-1}\int_{\QQ_S} f(x_1, \vx')dx_1 
\end{split}\]
and $0$ otherwise. Then the function above is bounded and compactly supported. Applying Theorem \ref{moment theorems} (a) to the last integral in \eqref{eq 1:MS Proposition 7.7}, we obtain the result. 
\end{proof}

\begin{proof}[Proof of Proposition~\ref{MS Proposition 7.6}]
We first observe that for each $\vk_t$ in Proposition~\ref{S-arithmetic part}, one can take $\sg_t=\sg_{\vk_t}$ as $\sa_t\gamma_t$, where $\sa_t=\diag(t/q, q/t, 1, \ldots, 1)$ and $\gamma_t\in \SL_d(\ZZ_S)$. 
Let $\Phi_t : \SH \rightarrow \SH$ be the map given as $\Phi_t(\sh)=\sa_t \sh \sa_t^{-1}$ so that $\sg_\vk \Gamma(q) \sg_\vk^{-1} \cap \SH= \Phi_t(\Gamma(q)\cap \SH)$.
Note that $(\Phi_t)_*\widetilde{\mu}_\SH=(t/q)^d \widetilde{\mu}_\SH$.

By the definition of $\nu_\vy$ and the change of variables by the map $\Phi_t$ for each $t$, we have
\[\begin{split}
&\int_{X_q(\vy)} \widehat{f}\left(\left(\ZZ_S^d+\frac \vp q \right)\sg\right)\nu_\vy(\sg)\\
&=\frac 1 {I_q \zeta_S(d)} \sum_{\scriptsize \begin{array}{c}
t\in \NN_S\\
\gcd(t,q)=1\end{array}}
\int_{(\sg_t \Gamma(q) \sg_t^{-1} \cap \SH)\setminus \SH}
\widehat{f} \left(\left(\ZZ_S^d+\frac \vp q\right)\sg_t^{-1}\sh \sg_\vy\right)d\widetilde{\mu}_\SH(\sh)\\
&=\frac {q^d} {I_q \zeta_S(d)} \sum_{\scriptsize \begin{array}{c}
t\in \NN_S\\
\gcd(t,q)=1\end{array}} \frac 1 {t^d}
\int_{(\Gamma(q) \cap \SH)\setminus \SH}
\widehat{f} \left(\left(\ZZ_S^d+\frac \vp q \gamma_t^{-1}\right)\sh \sa_t^{-1}\sg_\vy\right)d\widetilde{\mu}_\SH(\sh)\\
&=\frac {q^d} {I_q \zeta_S(d)} \sum_{\scriptsize \begin{array}{c}
t\in \NN_S\\
\gcd(t,q)=1\end{array}} \frac 1 {t^d}
\int_{(\Gamma(q) \cap \SH)\setminus \SH}
\widehat{f} \left(\left(\ZZ_S^d+\frac t q \ve_1\right)\sh \sa_t^{-1}\sg_\vy\right)d\widetilde{\mu}_\SH(\sh).
\end{split}\]
Here, the last equality is deduced from the fact that $$(\ZZ_S^d+\vp/q)\gamma_t^{-1}=(\ZZ_S^d+\vk_t)\gamma_t^{-1}=\ZZ_S^d+(t/q)\ve_1$$ by the definition of $\gamma_t\in \SL_d(\ZZ_S)$.

Applying Lemma~\ref{MS Proposition 7.7} with the function $\vv\mapsto f(\vv \sa_t^{-1}\sg_\vy)$ for each $t$, the above integral is
\[\begin{split}
&=\frac {q^{2d-1} I_q^{(d-1)}} {I_q \zeta_S(d)}
\sum_{\scriptsize \begin{array}{c}
t\in \NN_S\\
\gcd(t,q)=1\end{array}} \frac 1 {t^d}
\left(
\sum_{\ell\in \ZZ_S} f\left(\left(\frac {\ell q} t+ 1\right)\vy\right) +
\int_{\QQ_S^d} f d\widetilde{\mu}_S\right)\\
&=\frac {q^{2d-1} I_q^{(d-1)}} {I_q \zeta_S(d)}
\hspace{-0.06in}\left(\hspace{-0.05in}\sum_{\scriptsize \begin{array}{c}
t\in \NN_S\\
\gcd(t,q)=1\end{array}} \hspace{-0.15in}\frac 1 {t^d}\right)\hspace{-0.1in}
\left(
\sum_{\scriptsize \begin{array}{c}
t\in \NN_S\\
\gcd(t,q)=1\end{array}}\hspace{-0.15in}\frac 1 {t^d}\sum_{\scriptsize \begin{array}{c}
a\in q\ZZ_S+t\\
\gcd(a,t)=1\end{array}} \hspace{-0.1in}f\left(\frac {a} t\vy\right) +
\int_{\QQ_S^d} f d\widetilde{\mu}_S\right),
\end{split}\]
where we put $t_1=\gcd(\ell q+t, q)$, $t_2=t/t_1$ and $a=(\ell q+t)/t_1$ in the inner summation and relabel them. 

As in the proof of Proposition 7.6 in \cite{MS2010}, since $I_q=q^{2d-1}I_1^{(d-1)}\sum_{e|q}\mu(e)e^{-d}$ and $\sum_{t\in \NN_S, \gcd(t,q)=1} 1/t^d=\zeta_S(d)\sum_{e|q} \mu(e)e^{-d}$, where $\mu(e)$ is the Mobius function, 
\[
\frac {q^{2d-1} I_q^{(d-1)}} {I_q \zeta_S(d)}
\cdot\sum_{\scriptsize \begin{array}{c}
t\in \NN_S\\
\gcd(t,q)=1\end{array}} \frac 1 {t^d}
=1
\]
whereby we deduce Proposition \ref{MS Proposition 7.6}.
\end{proof}

\begin{remark}\label{remark: prob. msr} Using $(\Phi_t)_*\widetilde{\mu}_\SH=(t/q)^d \widetilde{\mu}_\SH$, with the similar argument as in the proof of Proposition \ref{MS Proposition 7.6}, one can show that $\nu_{\vy}$ is the probability measure on $X_q(\vy)$ (see \cite[Proposition 7.5]{MS2010}).
\end{remark}

\begin{proof}[Proof of Proposition~\ref{MS (7.25) GKY (3.6)}]
As mentioned before, since $\prod_{p\in S}(\QQ_p^d-\{\origin\})$ is a conull set in $\QQ_S^d$, it suffices to show that
\begin{equation*}\label{eq 0: MS (7.25) GKY (3.6)}
\begin{split}
&\frac 1 {I_q}\int_{X_q} \sum_{\vk\in \ZZ_S^d+\vp/q} F(\Gamma(q)\sg, \vk\sg)d\widetilde{\mu}_S(\sg)\\
&\hspace{1.2in}=\int_{\prod_{p\in S}(\QQ_p^d-\{\origin\})} \int_{X_q(\vy)} F(\Gamma(q)\sg, \vy) d\nu_{\vy}(\sg)d\vy.
\end{split}\end{equation*}

We may assume that $F=\ind_\mathcal U\cdot \ind_W$, where $\mathcal U\subseteq \Gamma(q)\setminus \UL_d(\QQ_S)$ and $W \subseteq \QQ_S^d$ are bounded and measurable, and show that
\begin{equation}\label{eq 1:MS (7.25) GKY (3.6)}\begin{split}
&\int_{W\cap \prod_{p\in S}(\QQ_p^d-\{\origin\})} \int_{X_q(\vy)} \ind_{\mathcal U} (\Gamma(q)\sg) d\nu_\vy(\sg)d\vy\\
&\hspace{1in}=\frac 1 {I_q}  \sum_{\vk\in \ZZ_S^d+\vp/q} \int_{X_q}
\ind_\mathcal U(\Gamma(q)\sg) \ind_W(\vk \sg) d\widetilde{\mu}_S(\sg).
\end{split}\end{equation}

Let $\mathcal U_0\subseteq \UL_d(\QQ_S)$ be the preimage of $\mathcal U$ under the projection $$\UL_d(\QQ_S)\rightarrow \Gamma(q)\setminus \UL_d(\QQ_S).$$
Let $\mathcal F$ be a fundamental domain of $\Gamma(q)\setminus \UL_d(\QQ_S)$. 
For each $t\in \NN$ with $\gcd(t,q)=1$, let $\vk_t$, $\sg_t=\sg_{\vk_t}$ be as in Proposition~\ref{S-arithmetic part}. Denote by $R_t$ the set of representatives for $(\Gamma(q)\cap \sg_t^{-1}\SH\sg_t)\setminus \Gamma(q)$.
Then one can check that
\begin{enumerate}[(i)]
\item for each $t\in \NN_S$ with $\gcd(t,q)=1$, $\big(\sg_t (\bigsqcup_{\gamma\in R_t} \gamma \mathcal F) \sg_\vy^{-1}\big)\cap \SH$ is a fundamental domain of $(\sg_t \Gamma(q) \sg_t^{-1} \cap \SH )\setminus \SH$;
\item there is a one-to-one correspondence between $\bigsqcup_{t\in \NN_S, \gcd(t,q)=1} \vk_t\:R_t$ and $\ZZ_S^d+\vp/q$
\end{enumerate}
(see \cite[Proof of Proposition 7.3]{MS2010}). 

By the definition of $\nu_\vy$,
\[\begin{split}
&\int_{X_q(\vy)} \ind_{\mathcal U} (\Gamma(q)\sg) d\nu_\vy(\sg)\\
&=\frac 1 {I_q \zeta_S(d)} \hspace{-0.15in}\sum_{\scriptsize \begin{array}{c}
t\in \NN_S\\
\gcd(t,q)=1\end{array}}\hspace{-0.15in}
\int_{(\sg_t\Gamma(q)\sg_t^{-1}\cap \SH)\setminus \SH}
\ind_{\mathcal U} (\Gamma(q)\sg_t^{-1}\left((\sg_t\Gamma(q)\sg_t^{-1}\cap\SH)\sh\right)\sg_\vy) d\widetilde{\mu}_\SH(\sh).
\end{split}\]

For each $\gamma\in R_t$ and $\sh=\sg_t(\gamma \sg')\sg_\vy^{-1}\in \sg_t(\gamma \mathcal F)\sg_\vy^{-1}\cap \SH$, 
\[
\sg_t^{-1}\sh\sg_\vy=\gamma\sg'\in \mathcal U_0\;
(\Leftrightarrow \gamma\sg'\in \gamma (\mathcal U_0\cap \mathcal F))\;\Leftrightarrow\;
\sh \in \sg_t\gamma(\mathcal U_0\cap \mathcal F) \sg_\vy^{-1}.
\] 
Hence
\begin{equation}\label{eq 2:MS (7.25) GKY (3.6)}\begin{split}
&\int_{X_q(\vy)} \ind_{\mathcal U} (\Gamma(q)\sg) d\nu_\vy(\sg)\\
&\hspace{0.5in}=\frac 1 {I_q \zeta_S(d)}\sum_{\scriptsize \begin{array}{c}
t\in \NN_S\\
\gcd(t,q)=1\end{array}}\sum_{\gamma\in R_t}
\int_{\SH} \ind_{\sg_t\gamma (\mathcal U_0\cap \mathcal F) \sg_{\vy}^{-1}}(\sh)d\widetilde{\mu}_H(\sh)\\
&\hspace{0.5in}=\frac 1 {I_q \zeta_S(d)} \sum_{\vk\in \ZZ_S^d+\vp/q}
\int_{\SH} \ind_{\sg_{\vk} (\mathcal U_0\cap \mathcal F) \sg_{\vy}^{-1}}(\sh)d\widetilde{\mu}_H(\sh),
\end{split}\end{equation}
where we take $\sg_\vk=\sg_t\gamma$ for each $t$ and $\gamma\in R_t$, which are associated with $\vk$ according to (ii).

\vspace{0.15in}
On the other hand, in view of Proposition~\ref{Def Vol} and Theorem~\ref{Com Vol}, we have
\[
\int_{\UL_d(\QQ_S)} f(\sg) d\widetilde{\mu}_S(\sg)
=\frac 1 {\zeta_S(d)} \int_{\QQ_S^d} \int_{\SH} f(\sh \sg_\vy)d\widetilde{\mu}_\SH(\sh)d\vy.
\]

Applying the above equation to each integral in the right hand side of \eqref{eq 1:MS (7.25) GKY (3.6)}, since $\widetilde{\mu}_S$ is $\UL_d(\QQ_S)$-invariant, we have
\begin{equation}\label{eq 3:MS (7.25) GKY (3.6)}\begin{split}
&\int_{X_q} \ind_{\mathcal U}(\Gamma(q)\sg)\ind_W(\vk\sg)d\widetilde{\mu}_S(\sg)\\
&\hspace{0.5in}=\int_{\UL_d(\QQ_S)} \ind_{\mathcal F\cap \mathcal U_0}(\sg_{\vk}^{-1}\sg)\ind_W(\vk\sg_{\vk}^{-1}\sg)d\widetilde{\mu}_S(\sg)\\
&\hspace{0.5in}=\frac 1 {\zeta_S(d)} \int_{\QQ_S^d} \int_{\SH}
\ind_{\mathcal F\cap\mathcal U_0}(\sg_\vk^{-1}\sh\sg_\vy)\ind_W(\vk\:\sg_\vk^{-1}\sh\sg_\vy)d\widetilde{\mu}_\SH(\sh)d\vy.
\end{split}\end{equation}

Since $\vk\sg_\vk^{-1}\sh\sg_\vy=\vy$, the equation \eqref{eq 1:MS (7.25) GKY (3.6)} follows from \eqref{eq 2:MS (7.25) GKY (3.6)} and \eqref{eq 3:MS (7.25) GKY (3.6)}.
\end{proof}

\section{Proof of Theorem~\ref{congruence case}}

From \eqref{relation}, Theorem~\ref{congruence case} is a direct consequence of the theorem below. Recall the notation $\NT(\q{}{\xi}, \mathcal I, \T)$ in Theorem~\ref{inhomogeneous case}.

\begin{theorem}\label{reduced congruence case} 
Under the same assumptions as in Theorem~\ref{congruence case}, there is $\delta_0=\delta_0(d,\kappa)>0$ such that for any $\delta\in(0,\delta_0)$, we have 
\[
\NT(\q{}{\frac {\vp} q}, \mathcal I, \T)
=c_{\q{}{}} \vol(\I_\T)|\T|^{d-2} + o\left(|\T|^{d-2-\kappa-\delta}\right)
\]
for almost every unimodular non-degenerate isotropic quadratic form $\q{}{}$.
Here the implied constant of the error term is uniform on a comapct set of $Y_{\vp/q}$.
\end{theorem}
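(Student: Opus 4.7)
The plan is to express $\NT(\q{}{\vp/q}, \mathcal I, \T)$ as a Siegel transform on the $S$-arithmetic affine quotient $Y_{\vp/q}$ and combine the moment formulas of Theorem~\ref{moment theorems} with a Chebyshev--Borel--Cantelli argument. Fix a reference non-degenerate isotropic form $\q{}{0}$ in the $\UL_d(\QQ_S)$-orbit of $\q{}{}$ and write $\q{}{}(\vv) = \q{}{0}(\vv\sg)$ for some $\sg \in \UL_d(\QQ_S)$. After substituting $\vk_1 = \vk + \vp/q$, the counting function equals, up to a lower-order boundary correction, the Siegel transform $\widehat{f_\T}\bigl((\ZZ_S^d + \vp/q)\sg\bigr)$, where $f_\T := \ind_{\Omega_\T}$ and
\[
\Omega_\T := \bigl\{\vy \in \QQ_S^d : \|\vy\|_p < T_p\ \text{for all}\ p \in S,\ \q{}{0}(\vy) \in \I_\T \bigr\}.
\]

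For the main term, Theorem~\ref{moment theorems}(a) yields
\[
\frac{1}{J_q}\int_{Y_{\vp/q}} \widehat{f_\T}\, d\widetilde{\mu}_S = \vol(\Omega_\T),
\]
and the $S$-arithmetic volume formula (Theorem~\ref{volume formula}), combined with the constraint \eqref{restriction from vol formula} on the exponents $\kappa_p$, gives $\vol(\Omega_\T) = c_{\q{}{0}}\vol(\I_\T)|\T|^{d-2} + O\bigl(|\T|^{d-2-\kappa-\delta_1}\bigr)$ for some $\delta_1 > 0$. For the variance, Theorem~\ref{moment theorems}(b) gives
\[
\operatorname{Var}(\widehat{f_\T}) = \sum_{\substack{t \in \NN_S \\ \gcd(t,q)=1}} \sum_{\substack{a \in q\ZZ_S + t \\ \gcd(a,t)=1}} \int_{\QQ_S^d} f_\T(t\vv) f_\T(a\vv)\, d\vv.
\]
Each integrand is the volume of $\{\vv \in \QQ_S^d : t\vv,\, a\vv \in \Omega_\T\}$, supported on a region shrinking in $(t,a)$, and may be estimated by rescaling together with the same volume formula used for the main term. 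The congruence $a \equiv t \pmod{q}$ with $q > 1$ furnishes an additional saving of order $q^{-d}$ over the homogeneous case treated in \cite{Han2021}. After isolating the diagonal contribution and carrying out the bookkeeping for the remaining pairs in the spirit of \cite[\S4]{GKY2020}, one obtains $\operatorname{Var}(\widehat{f_\T}) = O\bigl(|\T|^{2(d-2)-\kappa-\delta_2}\bigr)$ for some $\delta_2 > 0$, with implied constant uniform on compact subsets of $Y_{\vp/q}$.

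Combining the variance bound with Chebyshev's inequality and the Borel--Cantelli lemma applied along a geometric subfamily $\T_{\vn} = (2^{n_0}, p_1^{n_1}, \ldots, p_s^{n_s})$ indexed by $\vn \in \NN^{s+1}$ produces the claimed asymptotic for $\widetilde{\mu}_S$-almost every $\sg \in Y_{\vp/q}$ along this discrete sequence, with any $\delta \in (0, \min(\delta_1, \delta_2))$. The monotonicity of the decreasing family $\{\I_\T\}$ then interpolates the estimate to all $\T \succeq \T_0$, and both the boundary correction from $\|\vk_1 - \vp/q\|_p$ versus $\|\vk_1\|_p$ and any smoothing needed to replace $\ind_{\Omega_\T}$ by continuous approximations contribute only lower-order errors. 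The principal technical obstacle will be the second-moment bound: the double sum over $(t,a)$ with the interleaved congruence constraints $\gcd(t,q) = \gcd(a,t) = 1$, $a \equiv t \pmod q$ must be dissected carefully so that the ball size $|\T|$, the thinness $\vol(\I_\T)$, and the $q^{-d}$ gain from the congruence are all simultaneously exploited while maintaining uniformity in the reference form $\q{}{0}$.
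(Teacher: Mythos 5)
Your overall strategy (first/second moment on $Y_{\vp/q}$, Chebyshev, Borel--Cantelli) is the same as the paper's, but there are two genuine gaps. First, the reduction to a Siegel transform of a \emph{fixed} function is incorrect: writing $\q{}{}(\vk)=\q{}{0}(\vk\sg)$ and substituting $\vy=\vk\sg$, the condition $\|\vk\|_p<T_p$ becomes $\|\vy\sg^{-1}\|_p<T_p$, so the counting function is the Siegel transform of $\ind_{\q{}{0}^{-1}(\I_\T)\cap B_\T\sg}$, a region that depends on the same variable $\sg$ over which you want to average. Theorem~\ref{moment theorems} applies to a fixed $f$, so your Chebyshev/Borel--Cantelli argument, as written, proves an almost-everywhere statement about counting points of $(\ZZ_S^d+\vp/q)\sg$ in the $\sg$-independent region $\Omega_\T$, i.e.\ about points $\vk$ of $\ZZ_S^d+\vp/q$ with $\|\vk\sg\|_p<T_p$ --- not the theorem's count with $\|\vk\|_p<T_p$. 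The paper's proof exists precisely to bridge this: it restricts $\sg$ to a compact set $\mathcal K$, sandwiches $A_{\q{\sh\sg}{0},\I_\T,\T}$ between $A_{\q{\sh}{0},\I_\T,(T_\infty(1\mp\varepsilon_1),\J^f)}$ for $\sg$ in an $\varepsilon_1$-ball around a net point $\sh$, and covers $\mathcal K$ by $\ll\varepsilon_1^{-\frac12(d+2)(d-1)}$ such balls; the cardinality of this net enters the summability conditions \eqref{B-C condition} and is exactly what produces the explicit range $\delta_0=\delta_0(d,\kappa)$. Your proposal omits this step entirely, and without it the uncountable family of forms cannot be reduced to countably many Chebyshev events.

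Second, the passage from a geometric subsequence $\T_{\vn}=(2^{n_0},p_1^{n_1},\ldots)$ to all $\T$ by monotonicity does not give the stated asymptotic: within a dyadic window $T_\infty\in[2^{n},2^{n+1})$ the main term $c_{\q{}{}}\vol(\I_\T)|\T|^{d-2}$ varies by a bounded factor (e.g.\ $2^{d-2}$, and $\vol(\I_\T)$ also moves), so sandwiching by the endpoints yields only upper and lower bounds off by multiplicative constants, not the main term with the correct constant and error $o(|\T|^{d-2-\kappa-\delta})$. The paper instead discretizes $T_\infty$ at unit scale and then again at the finer scale $\varepsilon_2=j_\infty^{-\beta_\infty}\prod p^{-\beta_p j_p}$ (its ``second approximation''), placing the supremum over $\T$ in the window \emph{inside} the bad event before Chebyshev; the resulting factor $\varepsilon_2^{-1}$ in the number of events is another input to \eqref{B-C condition}. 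Finally, a small point: the claimed extra saving of order $q^{-d}$ in the variance from the congruence $a\equiv t\ (\mathrm{mod}\ q)$ is neither established nor needed --- the paper simply enlarges the $(t,a)$-sum to drop the congruence and quotes the homogeneous estimate to get $\mathrm{Var}\ll\vol(A)$, which suffices.
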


\begin{proof}[Proof of Theorem~\ref{congruence case}]
By \eqref{relation}, 
\[
\NT(q,\vp; \q{}{}, \mathcal I, \T)=\NT(\q{}{\frac {\vp} q}, \mathcal I', \T'),
\]
where $\T'=\left(T_\infty/q, T_{p_1}, \ldots, T_{p_s}\right)$ and $\mathcal I'=\{\I'_{\T'}\}$ with $\I'_{\T'}=\frac 1 {q^2}\I_\T$.
Note that $|\T'|=\frac 1 q |\T|$ and $\vol(\I'_{\T'})=\frac 1 {q^2} \vol(\I_\T)$. In particular, $\mathcal I'$ satisfies that $\vol(\I'_{\T'})=(c/{q^{2-\kappa}})|\T'|^{-\kappa}$. By Theorem~\ref{reduced congruence case}, 
\[\begin{split}
\NT(q,\vp;\q{}{},\mathcal I,\T)
&=\NT(\q{}{\frac {\vp} q}, \mathcal I', \T')
=c_{\q{}{}}\vol(\I'_{\T'})|\T'|^{d-2}+o\left(|\T'|^{d-2-\kappa-\delta}\right)\\
&=c_{\q{}{}}\frac {1}{q^d}\vol(\I_\T)|\T|^{d-2}+o(|\T|^{d-2-\kappa-\delta}).
\end{split}\]
\end{proof}

\begin{theorem}\label{variance property}
Let $d\ge 3$. Let $A=\prod_{p\in S} A_p$ be the product of bounded Borel sets $A_p$ in $\QQ_p^d$ for each $p\in S$. There is a constant $C_d>0$, depending only on the dimension $d$, such that
\[
\widetilde{\mu}_S\left(\left\{
\Lambda \in Y_{\vp/q} : \left|\#(\Lambda \cap A) - \vol(A)\right|>M
\right\}\right)< J_qC_d\cdot\frac {\vol(A)} {M^2}.
\]
\end{theorem}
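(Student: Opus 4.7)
The plan is to apply Chebyshev's inequality combined with the first and second moment formulae of Theorem~\ref{moment theorems} to the indicator function $f = \ind_A$ (so that $\widehat f(\Lambda) = \#(\Lambda \cap A)$). Chebyshev gives
\[
\widetilde{\mu}_S\bigl(\{\Lambda : |\widehat f(\Lambda) - \vol(A)| > M\}\bigr)
\le \frac{1}{M^2}\int_{Y_{\vp/q}}\bigl|\widehat f - \vol(A)\bigr|^{2}\,d\widetilde{\mu}_S,
\]
and Theorem~\ref{moment theorems}(a)--(b) yield $J_q^{-1}\int \widehat f\,d\widetilde{\mu}_S = \vol(A)$ and $J_q^{-1}\int \widehat f^{\,2}\,d\widetilde{\mu}_S = \vol(A)^{2} + \Sigma$, where
\[
\Sigma = \sum_{\substack{t \in \NN_S,\ \gcd(t,q)=1 \\ a \in q\ZZ_S+t,\ \gcd(a,t)=1}}\int_{\QQ_S^d}\ind_A(t\vv)\ind_A(a\vv)\,d\vv.
\]
Expanding the square, the constant terms cancel, so the statement reduces to the variance bound $\Sigma \le C_d \vol(A)$.

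Each summand is controlled by change of variables. Writing $A = \prod_p A_p$ and substituting $\vw = t\vv$ (using $|t|_S = t$ since $|t|_p = 1$ at every $p \in S_f$), the integral equals $t^{-d}\prod_p \vol_p(A_p \cap (t/a)A_p)$, and the place-wise inequality $\vol_p(A_p \cap (t/a)A_p) \le \vol_p(A_p)\min(1, |t/a|_p^d)$ gives
\[
\int_{\QQ_S^d}\ind_A(t\vv)\ind_A(a\vv)\,d\vv \le \vol(A)\cdot t^{-d}\min\bigl(1, (t/|a|_\infty)^d\bigr)\prod_{p\in S_f}\max(1, |a|_p^d)^{-1}.
\]

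To sum these weights, I would parametrize $a \in \ZZ_S\setminus\{0\}$ in reduced form $a = m/\pp$ with $m \in \ZZ$, $\pp \in \PP_S$, $\gcd(m,\pp) = 1$, so that the finite-place factor becomes $\pp^{-d}$ and the congruence $a \in q\ZZ_S + t$ translates to $m \equiv t\pp \pmod q$. A standard coprime/congruence count gives $O(N/q)$ for the number of valid $m$ with $|m| \le N$. Splitting the $a$-sum according as $|m| \le t\pp$ or $|m| > t\pp$, inserting the counts into the weights $t^{-d}\pp^{-d}$ (resp.\ $m^{-d}$), and evaluating the geometric series $\sum_t t^{1-d}$ and $\sum_{\pp \in \PP_S}\pp^{1-d} = \prod_{i=1}^{s}(1 - p_i^{1-d})^{-1}$ (both convergent for $d \ge 3$) yields $\Sigma = O(\vol(A)/q) \le C_d \vol(A)$.

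The main obstacle is this last counting step in the $S$-arithmetic setting, where the product structure $|a|_S = |a|_\infty\prod_{p\in S_f}|a|_p$ allows a single $a$ of small $S$-height to have large local components. The key observation is that the refined bound above provides independent decay $\pp^{-d}$ at the finite places, which tames the $p$-adic part of the sum and reduces convergence to an essentially Archimedean estimate, for which the assumption $d \ge 3$ is exactly what is needed.
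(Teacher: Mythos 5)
Your proposal is correct and follows essentially the same route as the paper: Chebyshev's inequality on the probability space $\left(Y_{\vp/q},\tfrac1{J_q}\widetilde{\mu}_S\right)$ combined with Theorem~\ref{moment theorems} applied to $f=\ind_A$, reducing everything to the bound $\Sigma\le C_d\vol(A)$ on the off-diagonal sum; the only difference is that the paper obtains this bound by enlarging the index set of pairs $(t,a)$ and citing the proof of Proposition~4.2(b) in \cite{Han2021}, whereas you carry out the change-of-variables and lattice-point count directly. (Your intermediate claim $\Sigma=O(\vol(A)/q)$ is slightly optimistic because the congruence counts are $O(N/q+1)$ rather than $O(N/q)$, but the resulting extra terms are still summable for $d\ge 3$, so the required bound $\Sigma\le C_d\vol(A)$ holds.)
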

\begin{proof}
Let $\ind_A$ be the indicator function of $A\in \QQ_S^d$.
Since
\[\begin{split}
&\{(t,a): t\in \NN_S, \;\gcd(t,q)=1, \; a\in q\ZZ_S+t, \; \gcd(a,t)=1\}\\
&\hspace{1in}\subseteq \{(t,a): t\in \NN_S,\; a\in q\ZZ_S-\{\origin\}, \; \gcd(a,t)=1\},
\end{split}\]
by Theorem \ref{moment theorems} and the proof of Proposition 4.2 (b) in \cite{Han2021}, there is $C_d>0$ for which
\begin{equation}\label{variance property: eq (1)}
\frac 1 {J_q} \int_{Y_{\vp/q}} {\widehat{\ind_A}\:}^2 d\widetilde{\mu}_S
\le \vol(A)^2 + C_d \vol(A).
\end{equation}

The result follows from \eqref{variance property: eq (1)} 
and Chebyshev's inequality with the probabily space $\left(Y_{\vp/q}, \frac 1 {J_q} \widetilde{\mu}_S\right)$. 
\end{proof}

\begin{theorem}\label{volume formula}
Let $d\ge 3$. For a given isotropic quadratic form $\q{}{}=(\q{p}{})_{p\in S}$, let $\sg=(g_p)_{p\in S}\in \GL_d(\QQ_S)$ be such that $(\q{p}{})^{g_p}$ is of the form $2x_1x_d+(\q{p}{})'(x_2, \ldots, x_{d-1})$, where the coefficients of $(\q{p}{})'$ are in $\ZZ_p$ if $p\neq 2$ and in $2\ZZ_p$ if $p=2$.
For each $p\in S_f$, denote by $k^{(p)}_0$, $z^{(p)}$ integers satisfying that 
\[
g_p (\ZZ_p^d - p\ZZ_p^d)+p^{k^{(p)}_0}\ZZ_p^d=g_p (\ZZ_p^d - p\ZZ_p^d) \quad\text{and}\quad
p^{z^{(p)}}\ZZ_p^d \subseteq g_p (\ZZ_p^d - p\ZZ_p^d).
\] 

Let $\I=(I^{(p)})_{p\in S}\subseteq [-N,N]\times \prod_{p\in S_f} p^{b^{(p)}} \ZZ_p^d$ be an $S$-interval such that for each $p\in S_f$, there is $k_p\in \ZZ$ for which
$I^{(p)}+p^{k^{p}_1}\ZZ_p=I^{(p)}.$

Then for $\T$ with $T_\infty> 2N^{1/d}$ and 
\begin{equation}\label{local property}
2t_p\ge \left\{\begin{array}{lc}
\max\{1+k^{(p)}_0+z^{(p)}-b^{(p)}, 1+k^{(p)}_1+2z^{(p)}-2b^{(p)}\}, &
\text{if } p\neq 2;\\ 
\max\{1+k^{(p)}_0+z^{(p)}-b^{(p)}+1, 1+k^{(p)}_1+2z^{(p)}-2b^{(p)}+2\}, & \text{if } p=2,
\end{array}\right. 
\end{equation}
we have that 
\[
\vol\left(\q{-1}{}(\I)\cap B(\origin, \T)\right)
=c_{\q{}{}} \vol(\I) |\T|^{d-2}+o_{\q{}{}}\left(\vol(\I)|\T|^{d-2}\right).
\]
\end{theorem}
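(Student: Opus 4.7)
The approach exploits the product structure of $\QQ_S = \prod_{p \in S} \QQ_p$. Since $\q{}{} = (q^{(p)})_{p \in S}$, $\I = \prod_p I^{(p)}$, and $B(\origin, \T) = \prod_p B_p(\origin, T_p)$, the volume factors as
$$\vol\bigl(\q{-1}{}(\I) \cap B(\origin, \T)\bigr) = \prod_{p \in S} V_p, \qquad V_p := \vol_p\bigl((q^{(p)})^{-1}(I^{(p)}) \cap B_p(\origin, T_p)\bigr).$$
The Archimedean factor $V_\infty$ is handled by the classical volume asymptotic for non-degenerate indefinite real quadratic forms; via the coarea formula (slicing by level sets of $q^{(\infty)}$) one obtains
$$V_\infty = c_\infty(q^{(\infty)}) \vol_\infty(I^{(\infty)}) T_\infty^{d-2} + o_{q^{(\infty)}}\bigl(\vol_\infty(I^{(\infty)}) T_\infty^{d-2}\bigr),$$
uniformly on compacta in the space of isotropic forms. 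The task is then to show that each $V_p$ with $p \in S_f$ equals $c_p(q^{(p)})\,\vol_p(I^{(p)}) T_p^{d-2}$ \emph{exactly} (no error term), so that multiplying over $p \in S$ reproduces the claim with $c_{\q{}{}} = c_\infty \prod_{p \in S_f} c_p$.

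At each $p \in S_f$, I would apply the change of variables $\vy = \vx g_p^{-1}$. The Jacobian is $|\det g_p|_p^{-1}$, the form becomes $Q(\vy) = 2 y_1 y_d + q'(y_2,\ldots,y_{d-1})$, and the ball $B_p(\origin, T_p)$ becomes the shifted $\ZZ_p$-module $B_p(\origin, T_p) g_p^{-1}$. By the definitions of $k^{(p)}_0$ and $z^{(p)}$, this module is sandwiched between two standard $p$-adic boxes whose scales differ by $p^{k^{(p)}_0 + z^{(p)}}$; the first inequality in \eqref{local property} ensures that both boxes are large enough that the integration over the middle coordinates $y' = (y_2,\ldots,y_{d-1})$ runs over a genuine $p$-adic box of $\QQ_p^{d-2}$-volume proportional to $T_p^{d-2}$, independently of $(y_1, y_d)$.

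For each fixed such $y'$, the remaining integral reduces to the two-dimensional hyperbolic computation
$$V_0(y') = \vol_p\bigl\{(y_1, y_d) \in \text{box} : 2 y_1 y_d \in I^{(p)} - q'(y')\bigr\}.$$
The $\ZZ_p$-integrality of the coefficients of $q'$ (resp.\ $2\ZZ_p$-integrality at $p = 2$), combined with the second inequality in \eqref{local property}, guarantees that the shift by $q'(y')$ keeps the right-hand side in a single coset of $p^{k^{(p)}_1}\ZZ_p$, so that $V_0(y')$ is independent of $y'$. Slicing the resulting integral by the valuation $|y_1|_p = p^{-s}$, changing variables $y_d \mapsto (2 y_1)^{-1} u$, and summing the resulting finite geometric series over the admissible values of $s$ evaluates to $c'_p \vol_p(I^{(p)})$ exactly; the $p$-adic nature of the level sets eliminates any error. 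Multiplying by the $T_p^{d-2}$ contributed by the $y'$-integration gives $V_p = c_p(q^{(p)}) \vol_p(I^{(p)}) T_p^{d-2}$ on the nose, and combining with the Archimedean asymptotic finishes the proof.

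The main technical obstacle is verifying that the condition \eqref{local property} is precisely what stabilizes the $p$-adic calculation, in particular tracking the extra $+1$ and $+2$ offsets that appear at $p = 2$ (they reflect $|2|_2 = 1/2$ entering through both the coefficient $2$ in $2 y_1 y_d$ and the $2\ZZ_p$-integrality of $q'$). Once the local computations are in hand, the recombination across places is immediate, and the error term in the final estimate comes entirely from the Archimedean factor.
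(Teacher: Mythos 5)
Your overall plan --- factoring the volume over the places of $S$, invoking the real asymptotic at the infinite place, and doing a direct computation at each finite place after the change of variables by $g_p$ --- is the same route the paper takes: its proof simply defers the real case to \cite{KY2018} (Theorem 5) and the finite places to the proof of Proposition 4.2 in \cite{HLM2017}, with the indicated adjustment at $p=2$. The problem is in your execution of the finite-place step. The claim that $V_0(y')=\vol_p\bigl\{(y_1,y_d): 2y_1y_d\in I^{(p)}-q'(y')\bigr\}$ is independent of $y'$ is false, and no condition of the shape \eqref{local property} can make it true: as $y'$ ranges over a box of radius $T_p=p^{t_p}$, the value $q'(y')$ hits essentially every residue modulo $p^{k^{(p)}_1}$, and, more importantly, $V_0(y')$ depends on the valuation $u=v_p\bigl(a_p-q'(y')\bigr)$ and not merely on a coset. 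Indeed, slicing by $|y_1|_p=p^{s}$, each admissible $s$ contributes about $(1-1/p)\,|2|_p^{-1}\vol_p(I^{(p)})$, and the number of admissible $s$ is roughly $2t_p-u+1$. Hence $V_0(y')\asymp t_p\,\vol_p(I^{(p)})$ when $q'(y')$ approximates $a_p$ to high $p$-adic precision, but only $\asymp\vol_p(I^{(p)})$ for generic $y'$; this variation is exactly why the statement fails for $d=2$ (logarithmic growth in $t_p$) and why $d\ge 3$ is needed.

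Consequently the shortcut ``box of volume $\propto T_p^{d-2}$ in $y'$ times a constant $V_0$'' does not yield the finite-place estimate. What is true is that the \emph{average} of $V_0(y')$ over the $y'$-box is $\asymp\vol_p(I^{(p)})$, and proving this requires integrating over $y'$ while keeping track of the measure of the set where $v_p\bigl(a_p-q'(y')\bigr)=u$ --- equivalently, working with the local densities of the full $d$-dimensional form, which is the content of the computation in \cite{HLM2017} that the paper cites. The conditions \eqref{local property}, together with the data $k^{(p)}_0$, $z^{(p)}$, $k^{(p)}_1$ and the extra offsets at $p=2$, serve to control the sandwiching of $g_p^{-1}B_p(\origin,T_p)$ and the boundary ranges of the valuation parameters, not to render $V_0(y')$ constant. (Your additional assertion that the finite-place factor is \emph{exactly} $c_p\vol_p(I^{(p)})T_p^{d-2}$ is also unsubstantiated, though harmless, since the theorem only needs an asymptotic.)
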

\begin{proof} Since the volume is the product measure $\prod_{p\in S}\vol_p$, it suffices to show the formula for each $p\in S$. 
For the real case, see \cite[Theorem 5]{KY2018}. 
For the $p$-adic case, the proof of Proposition 4.2 in \cite{HLM2017} implies the statement for $p\ge 3$. 
In the case when $p=2$, almost the same proof as that of Proposition 4.2 in \cite{HLM2017} is applicable for $\q{p}{}$, which is of the form $2x_1x_d+q'(x_2,\ldots, x_{d-1})$, where the coefficients of $q'$ are in $2\ZZ_p$.
\end{proof}

For a discrete set $\Lambda$ and a finite-volume set $A$ in $\QQ_S^d$, define
\[
D(\Lambda, A)=\left|\#(\Lambda \cap A) - \vol(A) \right|.
\]
One can obtain the following lemma directly.

\begin{lemma}\label{discrepancy}
Let $\Lambda\subseteq \QQ_S^d$ be a discrete set. Let $A_1 \subseteq A \subseteq A_2 \subseteq \QQ_S^d$ be sets with finite volume. Then
\[
D(\Lambda, A)+\vol(A_2-A_1)\le \max\left\{D(\Lambda, A_1), D(\Lambda, A_2)\right\}.
\]
\end{lemma}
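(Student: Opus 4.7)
The plan is to control $\#(\Lambda \cap A) - \vol(A)$ by sandwiching it via the inclusions $A_1 \subseteq A \subseteq A_2$ and then combine one-sided estimates for the signed discrepancy. Set $n = \#(\Lambda \cap A)$, $n_i = \#(\Lambda \cap A_i)$, $v = \vol(A)$, $v_i = \vol(A_i)$, and $e_i = n_i - v_i$. Since $\Lambda$ is discrete and $\vol$ is monotone on nested sets, the inclusions give $n_1 \le n \le n_2$ and $v_1 \le v \le v_2$.

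First I would derive the two-sided bound on the signed discrepancy. For the upper side,
\[
n - v \;\le\; n_2 - v_1 \;=\; (n_2 - v_2) + (v_2 - v_1) \;\le\; D(\Lambda, A_2) + \vol(A_2 \setminus A_1),
\]
and symmetrically
\[
n - v \;\ge\; n_1 - v_2 \;=\; (n_1 - v_1) - (v_2 - v_1) \;\ge\; -D(\Lambda, A_1) - \vol(A_2 \setminus A_1).
\]
Taking absolute values immediately delivers the classical sandwich estimate $D(\Lambda, A) \le \max\{D(\Lambda, A_1), D(\Lambda, A_2)\} + \vol(A_2 \setminus A_1)$.

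The main obstacle is sharpening this to the stated form, in which $\vol(A_2 \setminus A_1)$ must sit on the left of the inequality rather than the right. The plan for this step is a tightness analysis: whenever the upper estimate $n - v \approx n_2 - v_1$ is nearly saturated, it forces $n_2 - n \approx 0$ and $v - v_1 \approx 0$, so the "mass" in $A_2 \setminus A_1$ carried by $\Lambda$ matches $\vol(A_2 \setminus A_1)$ closely, and $D(\Lambda, A_2)$ must itself dominate $\vol(A_2 \setminus A_1)$ plus the error coming from $A$. A parallel analysis with $A_1$ handles the lower saturation case. Combining these saturation statements with the signed relation $e_2 - e_1 = (n_2 - n_1) - (v_2 - v_1)$, which decouples the two discrepancies from the volume increment, is the key identity needed to absorb $\vol(A_2 \setminus A_1)$ onto the left-hand side.

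Finally, I would check the extreme boundary cases — $A = A_1$ or $A = A_2$, and $\vol(A_2 \setminus A_1) = 0$ — to confirm consistency of the resulting bound, and verify that the constant is sharp by testing against simple model configurations of $\Lambda$ and $A_i$. The crux of the proof lies in the refined case analysis described above; the rest is bookkeeping via the identities relating $n - v$, $e_1$, $e_2$, and the volume increment.
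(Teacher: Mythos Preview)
Your first paragraph already proves the correct and intended inequality: the standard sandwich bound
\[
D(\Lambda,A)\;\le\;\max\{D(\Lambda,A_1),\,D(\Lambda,A_2)\}\;+\;\vol(A_2\setminus A_1).
\]
The paper gives no proof (it just says ``One can obtain the following lemma directly''), and this elementary bound is precisely what is used in the applications: one needs that a large discrepancy for $A$ forces a large discrepancy for one of the approximating sets $A_1,A_2$, up to the volume of the shell $A_2\setminus A_1$.

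The ``main obstacle'' you identify is in fact a typo in the displayed statement, not a gap in your argument. The literal inequality
\[
D(\Lambda,A)+\vol(A_2\setminus A_1)\;\le\;\max\{D(\Lambda,A_1),\,D(\Lambda,A_2)\}
\]
is false in general. For instance, take $\Lambda=\ZZ$ in $\RR$ and $A_1=[0,1)$, $A=[0,2)$, $A_2=[0,3)$: then $D(\Lambda,A_1)=D(\Lambda,A)=D(\Lambda,A_2)=0$ while $\vol(A_2\setminus A_1)=2$, and the claimed inequality would read $2\le 0$. So your proposed ``tightness analysis'' cannot succeed, and you should abandon that step; what you have already written (the classical sandwich estimate) is exactly the lemma that is needed and that the paper uses.
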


\begin{proof}[Proof of Theorem~\ref{reduced congruence case}]
We will follow the strategy of the proof of Theorem 2.10 in \cite{Han2021}, which is based on the Borel-Cantelli lemma. 
We first fix an arbitrary compact set $\mathcal K$ in $\UL_d(\QQ_S)$.
For notational simplicity, let us denote $$\Lambda_0=\ZZ_S^d+\frac {\vp} q.$$

For a quadratic form $\q{}{}$, $\I\subseteq \QQ_S$ and $\T$, define 
\[
A_{\q{}{},\I, \T}=\q{-1}{}(\I)\cap B_{\T},
\]
where $B_{\T}=\{\vv\in \QQ_S^d : \|\vv\|_p<B_{\T}\}$.

Let $(\delta_p)_{p\in S}$ be an $S$-tuple of positive real numbers.
For $\J=(j_\infty, p_1^{j_1}, \ldots, p_s^{j_s})\in \NN\times \prod_{p\in S_f} p^{\NN}$,
define
\[
\mathcal C_{\J}
=\left\{
\sg\in \mathcal K: \begin{array}{c}
\hspace{-0.4in}D\left(\Lambda_0, A_{\q{\sg}{0},\T}\right)>\vol(\I_\T)
\prod_{p\in S} T_p^{d-2-\delta_p}\\[0.05in]
\hspace{1.2in}\text{for some }\T \in [j_\infty, j_\infty+1)\times \J^f\end{array}
\right\}.
\]

The theorem follows if $\widetilde{\mu}_S\left(\bigcap_{\J_0}\bigcup_{\J\succeq \J_0}\mathcal C_\J\right)=0$ and by Borel-Cantelli lemma, it suffices to show that
\begin{equation}\label{main thm: eq (1)}
\sum_{\J \succeq \J_0} \widetilde{\mu} (\mathcal C_\J)<\infty.
\end{equation}
for appropriate $(\delta_p)_{p\in S}$. Let us choose $(\delta_p)_{p\in S}$, $(\alpha_p)_{p\in S}$ and $(\beta_p)_{p\in S}$ for which
\begin{equation}\label{B-C condition}\begin{split}
&\left\{\begin{array}{l}
0<\delta_\infty < \alpha_\infty,\\
\delta_\infty < \beta_\infty +1,\\
-(d-2-\kappa_\infty)-2\delta_\infty+\alpha_\infty\left(\frac 1 2 (d+2)(d-1)\right)+\beta_\infty < -1;
\end{array}
\right.\\[0.1in]
&\left\{\begin{array}{l}
0<\delta_p < \alpha_p,\\
\delta_p < \beta_p,\\
-(d-2-\kappa_p)-2\delta_p+\alpha_p\left(\frac 1 2 (d+2)(d-1)\right)+\beta_p < 0.
\end{array}
\right.
\end{split}\end{equation}
Note that the range of such $\delta_p$ are $\left(0, \frac {d-2-\kappa_p} {(d+2)(d-1)/2-1}\right)$. 

\vspace{0.2in}
\noindent\emph{The first Approximation: the space}\hspace{0.2in}
We first observe that for $\sh$ and $\sg=(g_p)_{p\in S}$ in $\UL_d(\QQ_S)$, $\I\subseteq \QQ_S$ and $\T=(T_p)_{p\in S}$,
\[\begin{split}
&\vv \in \Lambda_0: \;
\q{\sh\sg}{0}:\; \q{\sh\sg}{0}(\vv)\in \I, \;\|\vv\|_p<T_p\;(p\in S)\\
&\Rightarrow \vw\in \Lambda_0\sg:\;
\q{\sh}{0}:\; \q{\sh}{0}(\vw)\in \I, \;\|\vw\|_p<\|g_p\|_{op}T_p\;(p\in S).
\end{split}\]

For each $\J$, let $\varepsilon_1=\varepsilon_1(\J)=j^{-\alpha_\infty}\cdot\prod_{p\in S_f}p^{-\alpha_p j_p}$. 
One can find $C(\mathcal K)>0$ such that for each $\J$, there is a subset $\mathcal Q=\mathcal Q(\mathcal K, \J)$ of $\mathcal K$ for which
\begin{enumerate}[(i)]
\item $\mathcal K \subseteq \bigcup_{\sh\in \mathcal Q} \sh.\mathcal B(\varepsilon_1)$, where
\[
\mathcal B(\varepsilon_1)=
\left\{
g_\infty \in \SL_d(\RR) : \|g_\infty\|_{op}\le 1+\varepsilon_1
\right\}\times \prod_{p\in S_f} \UL_d(\ZZ_p);
\]
\item $\# \mathcal Q(\mathcal K, \J) < C(\mathcal K)\varepsilon^{-\frac 1 2 (d+2)(d-1)}.$
\end{enumerate}

Here, $\frac 1 2 (d+2)(d-1)$ is the codimension of $\SO(d)$ in $\SL_d(\RR)$ since elements in $\SO(d)\times \prod_{p\in S_f}\UL_d(\ZZ_p)$ have unit operator norms.

For $\sg \in \mathcal B(\varepsilon_1)$,
since 
\[A_{\q{\sh}{0}, \I_\T, (T_\infty(1-\varepsilon_1), \J^f)}
\subseteq A_{\q{\sh\sg}{0}, \I_\T, (T_\infty, \J^f)} 
\subseteq A_{\q{\sh}{0}, \I_\T, (T_\infty(1+\varepsilon_1), \J^f)},\]
if we put $\I^u_{\T}:=\I_{(T_\infty(1+\varepsilon)^{-1}, \J^f)}$ and 
$\I^\ell_{\T}:=\I_{(T_\infty(1-\varepsilon)^{-1}, \J^f)}$, where $\mathcal I=\{\I_\T\}$,
it follows that for all sufficiently large $\J$ (depending on the choice of $(\alpha_p)_{p\in S}$ and $(\delta_p)_{p\in S}$), by Lemma~\ref{discrepancy} and Theorem~\ref{volume formula} with the condition in \eqref{B-C condition},
\begin{equation*}
\mathcal C_\J \subseteq C_\J^u \cup C_\J^\ell,
\end{equation*}
where
\[\begin{split}
C_\J^u&=\bigcup_{\sh\in \mathcal Q}
\left\{
\sg\in \mathcal B(\varepsilon_1): \begin{array}{c}
D\left(\Lambda_0\sg, A_{\q{\sh}{0},\I^u_\T,\T}\right)>0.99\vol(\I^u_\T)
\prod_{p\in S} T_p^{d-2-\delta_p}\\[0.05in]
\hspace{0.8in}\text{for some }\T \in [j_\infty, j_\infty+1+\varepsilon_1)\times \J^f\end{array}
\right\},
\\
C_\J^\ell&=\bigcup_{\sh\in \mathcal Q}
\left\{
\sg\in \mathcal B(\varepsilon_1): \begin{array}{c}
D\left(\Lambda_0\sg, A_{\q{\sh}{0},\I^\ell_\T,\T}\right)>0.99\vol(\I^\ell_\T)
\prod_{p\in S} T_p^{d-2-\delta_p}\\[0.05in]
\hspace{0.8in}\text{for some }\T \in [j_\infty-\varepsilon_1, j_\infty+1)\times \J^f\end{array}
\right\}.
\end{split}\]

We remark that since $I^{(\infty)}_{T_\infty}$ is a sequence of decreasing interval and $\kappa_p<2$ for any $p\in S_f$, one can find a uniform $\T_0$, depending on the compact set $\mathcal K$, such that Theorem~\ref{volume formula} holds for $\q{\sh}{}$ for any $\sh \in \mathcal K$ and $\T\succeq \T_0$.

\noindent \emph{The second Approximation: the radius}\hspace{0.2in}
Let $\varepsilon_2=j_\infty^{-\beta_\infty}\prod_{p\in S_f}j_p^{-\beta_p j_p}$.
For each $k=0, 1, \ldots, \lfloor (1+\varepsilon_1)/\varepsilon_2 \rfloor$, and for $j_\infty+\varepsilon_2 k\le T_\infty < j_\infty+\varepsilon_2(k+1)$, we have
\[
A_{\q{\sh}{0}, \I^u_{\J_{k+1}}, \J_k}
\subseteq
A_{\q{\sh}{0}, \I^u_\T, \T}
\subseteq
A_{\q{\sh}{0}, \I^u_{\J_k}, \J_{k+1}},
\]
where $\J_k=(j_\infty+\varepsilon_2 k, \J^f)$.
Again,  by Lemma~\ref{discrepancy} and Theorem~\ref{volume formula} with the condition in \eqref{B-C condition} provided that 
$$(j_\infty+\varepsilon_2 k)^{-\kappa_\infty}-(j_\infty+\varepsilon_2(k+1))^{-\kappa_\infty} \ll_{\kappa_\infty} (j_\infty+\varepsilon_2k)^{-\kappa_\infty-1},$$ 
it follows that for all sufficiently large $\J$,
\[
\mathcal C^u_\J \subseteq
\mathcal C^{uu}_\J \cup \mathcal C^{u\ell}_\J,
\]
where for each $\J_k$,
\[
\I^{uu}_{\J_k}:=\I^u_{(j_\infty+\varepsilon_2(k-1), \J^f)}\quad\text{and}\quad\I^{u\ell}_{\J_k}:=\I^u_{(j_\infty+\varepsilon_2(k+1), \J^f)},\] 
and
\[\begin{split}
\mathcal C^{uu}_\J&=\bigcup_{\sh\in \mathcal Q}\bigcup_{k=1}^{\lfloor \frac{1+\varepsilon_1} {\varepsilon_2}\rfloor+1}
\left\{\sg \in \mathcal B(\varepsilon_1) : \begin{array}{l}
D\left(\Lambda_0\sg, A_{\q{\sh}{0}, \I^{uu}_{\J_k}, \J_k}\right)
\hspace{0.1in}> 0.9 \vol(\I^{uu}_{\J_k}) \\[0.1in]
\hspace{0.4in}\times(j_k+\varepsilon_2 k)^{d-2-\delta_\infty}\prod\limits_{p\in S_f} p^{j_p(d-2-\delta_p)}
\end{array}
\right\},\\
\mathcal C^{u\ell}_\J&=\bigcup_{\sh\in \mathcal Q}\bigcup_{k=0}^{\lfloor \frac{1+\varepsilon_1} {\varepsilon_2}\rfloor}
\left\{\sg \in \mathcal B(\varepsilon_1) : \begin{array}{l}
D\left(\Lambda_0\sg, A_{\q{\sh}{0}, \I^{u\ell}_{\J_k}, \J_k}\right)>0.9 \vol(\I^{u\ell}_{\J_k})\\[0.1in]
\hspace{0.4in} \times (j_k+\varepsilon_2 k)^{d-2-\delta_\infty}\prod\limits_{p\in S_f} p^{j_p(d-2-\delta_p)}
\end{array}
\right\}.
\end{split}\]
\end{proof}

Finally, by Theorem~\ref{variance property} and Theorem~\ref{volume formula},
for each $k$,
\[\begin{split}
&\widetilde{\mu}_S\left(\left\{\sg \in \mathcal B(\varepsilon_1) : \begin{array}{l}
D\left(\Lambda_0\sg, A_{\q{\sh}{0}, \I^{uu}_{\J_k}, \J_k}\right)
\hspace{0.1in}> 0.9 \vol(\I^{uu}_{\J_k}) \\[0.1in]
\hspace{0.4in}\times(j_k+\varepsilon_2 k)^{d-2-\delta_\infty}\prod\limits_{p\in S_f} p^{j_p(d-2-\delta_p)}
\end{array}\right\}\right)\\
&\hspace{2.3in}\ll 
j_\infty^{\kappa_\infty-(d-2)-2\delta_\infty}
\prod_{p\in S_f} p^{j_p(\kappa_p-(d-2)-2\delta_p)}
\end{split}\]
so that
\[\begin{split}
&\widetilde{\mu}_S(\mathcal C^{uu}_\J) \ll
j_\infty^{-(d-2-\kappa_\infty)-2\delta_\infty+\alpha_\infty\left(\frac 1 2 (d+2)(d-1)\right)+\beta_\infty}\\
&\hspace{1.8in}\cdot\prod_{p\in S_f}
p^{j_p(-(d-2-\kappa_p)-2\delta_p+\alpha_p\left(\frac 1 2 (d+2)(d-1)\right)+\beta_p)}.
\end{split}\]
Hence $\sum_{\J\succeq \J_0} \widetilde{\mu}_S(\mathcal C^{uu}_\J)<\infty$ by \eqref{B-C condition} for sufficiently large $\J_0$. Similarly, one can show that the summands of $\widetilde{\mu}_S(\mathcal C^{u\ell})$, $\widetilde{\mu}_S(\mathcal C^{\ell u})$ and $\widetilde{\mu}_S(\mathcal C^{\ell\ell})$ are finite, which shows \eqref{main thm: eq (1)}.

\section{The Space of Inhomogeneous Quadratic Forms}

Using Rogers' higher moment formulas for the space of unimodular affine lattices in $\RR^d$ (\cite[Appendix B]{EMV2015}, \cite[Lemma 4]{A15}), in \cite{GKY2020}, it was noted that the effective Oppenheim conjecture holds for almost all unimodular affine lattices in $\RR^d$. In this section, we generalise this result to the space of unimodular affine $S$-lattices in $\QQ_S^d$ and for this, let us first show Rogers' higher moment formulas for $\AUL_d(\ZZ_S)\setminus \AUL_d(\QQ_S)$.

\begin{theorem}\label{moment formulae: inhomogeneous} 
\begin{enumerate}[(a)] Let $d\ge 2$.
\item  For a bounded compactly supported function $f:\QQ_S^d\rightarrow \RR_{\ge 0}$, 
\[
\int_{\AUL_d(\ZZ_S)\setminus \AUL_d(\QQ_S)} \widehat{f}\: d\widetilde{\mu}_Sd\vv
=\int_{\QQ_S^d} f(\vv) \:d\vv. 
\]
\item For a bounded compactly supported function $F:(\QQ_S^d)^2\rightarrow \RR_{\ge 0}$, 
\[\begin{split}
&\int_{\AUL_d(\ZZ_S)\setminus \AUL_d(\QQ_S)} 
\sum_{\vk_1, \vk_2\in \ZZ_S^d} F\left(\vk_1\sg+\vv, \vk_2\sg+\vv\right) d\widetilde{\mu}_S d\vv \\
&\hspace{1.5in}=\int_{(\QQ_S^d)^2} F(\vv_1,\vv_2) d\vv_1d\vv_2+\int_{\QQ_S^d} F(\vv,\vv) d\vv.
\end{split}\]
In particular, if we let $F(\vx_1,\vx_2)={\ind_A}(\vx_1)\ind_A(\vx_2)$ for a borel set $A\subseteq \QQ_S^d$, it holds that
\[
\int_{\AUL_d(\ZZ_S)\setminus \AUL_d(\QQ_S)}
{\widehat{\ind_A}\:}^2 d\widetilde{\mu}_S d\vv=\vol(A)^2 + \vol(A).
\] 
\end{enumerate}
\end{theorem}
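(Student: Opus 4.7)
The strategy is the classical unfolding argument of Rogers, adapted to the $S$-arithmetic affine setting. The key structural observation is that $\AUL_d(\ZZ_S)\setminus \AUL_d(\QQ_S)$ fibers over $\UL_d(\ZZ_S)\setminus \UL_d(\QQ_S)$, with the fiber over a class $\UL_d(\ZZ_S)\sg$ being the torus $\ZZ_S^d\sg\setminus \QQ_S^d$. Since $\sg$ is unimodular, the fiber has volume one, and the product measure $d\widetilde{\mu}_S(\sg)\,d\vv$ (where $\vv$ ranges over a fundamental domain of $\ZZ_S^d\sg$ in $\QQ_S^d$) is the $\AUL_d(\QQ_S)$-invariant probability measure on the quotient. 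For fixed $\sg$, the torus coset representative $\vv$ together with the lattice translate $\vk\in\ZZ_S^d$ combines to give a single unrestricted element of $\QQ_S^d$ via $(\vk,\vv)\mapsto \vk\sg+\vv$.

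\emph{Part (a).} For fixed $\sg$, the standard unfolding identity reads
\[
\int_{\ZZ_S^d\sg\setminus\QQ_S^d}\widehat f(\ZZ_S^d\sg+\vv)\,d\vv
=\int_{\ZZ_S^d\sg\setminus\QQ_S^d}\sum_{\vk\in\ZZ_S^d}f(\vk\sg+\vv)\,d\vv
=\int_{\QQ_S^d}f(\vu)\,d\vu,
\]
which is independent of $\sg$. Integrating against the probability measure $d\widetilde{\mu}_S$ on $\UL_d(\ZZ_S)\setminus\UL_d(\QQ_S)$ preserves this value, yielding (a). Absolute convergence and the ability to swap sum and integral are controlled by the $\alpha$-bound and integrability (Theorems \ref{HLM Lemma 3.8}, \ref{HLM Lemma 3.10}) applied to the compactly supported $f$.

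\emph{Part (b).} Split the double sum according to whether $\vk_1=\vk_2$ or not. The diagonal contribution
\[
\int_{\AUL_d(\ZZ_S)\setminus\AUL_d(\QQ_S)}\sum_{\vk\in\ZZ_S^d}F(\vk\sg+\vv,\vk\sg+\vv)\,d\widetilde{\mu}_S\,d\vv
\]
unfolds exactly as in part (a) to give $\int_{\QQ_S^d}F(\vv,\vv)\,d\vv$. For the off-diagonal contribution, write $\vk_2=\vk_1+\vm$ with $\vm\in\ZZ_S^d-\{\origin\}$ and unfold the sum over $\vk_1$ against the torus integration:
\[
\int_{\UL_d(\ZZ_S)\setminus\UL_d(\QQ_S)}\sum_{\vm\in\ZZ_S^d-\{\origin\}}\int_{\QQ_S^d}F(\vu,\vu+\vm\sg)\,d\vu\,d\widetilde{\mu}_S(\sg).
\]
Now apply Siegel's first moment formula for $\UL_d(\ZZ_S)\setminus\UL_d(\QQ_S)$ (the case $q=1$ of Theorem \ref{moment theorems}(a), equivalently \cite{Han2021}) to the function $\vw\mapsto\int_{\QQ_S^d}F(\vu,\vu+\vw)\,d\vu$, which is bounded and compactly supported on $\QQ_S^d$. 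This gives $\int_{\QQ_S^d}\int_{\QQ_S^d}F(\vu,\vu+\vw)\,d\vu\,d\vw$, and the change of variables $\vv_1=\vu,\vv_2=\vu+\vw$ produces $\int_{(\QQ_S^d)^2}F(\vv_1,\vv_2)\,d\vv_1\,d\vv_2$. Adding the two pieces gives the formula, and the specialization $F=\ind_A\otimes\ind_A$ is immediate.

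\textbf{Expected main obstacle.} There is no serious conceptual difficulty here; this is a direct $S$-arithmetic analogue of the Rogers–Eskin–Margulis–Venkatesh computation. The only points that require care are (i) verifying that the product measure $d\widetilde{\mu}_S\,d\vv$ really is the invariant probability measure on $\AUL_d(\ZZ_S)\setminus\AUL_d(\QQ_S)$, which follows from Theorem \ref{Com Vol} and the semidirect product structure together with the fact that the $\QQ_S^d$ factor is unimodular under the $\ZZ_S^d$-action; and (ii) justifying the interchange of sum and integral in the off-diagonal term, which is handled uniformly by the compact support of $F$ and the $L^r$-bound on $\alpha$ for $r<d$ from Theorem \ref{HLM Lemma 3.10}.
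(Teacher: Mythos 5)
Your proposal is correct and follows essentially the same route as the paper: unfold the torus fiber against the $\ZZ_S^d$-sum over the fundamental domain $\bigcup_{\sg\in\mathcal F}\mathcal T\sg$ for part (a), and for part (b) substitute $\vk_3=\vk_2-\vk_1$ (your diagonal/off-diagonal split), unfold the $\vk_1$-sum, and apply the homogeneous Siegel integral formula on $\UL_d(\ZZ_S)\setminus\UL_d(\QQ_S)$ to the function $\vw\mapsto\int_{\QQ_S^d}F(\vu,\vu+\vw)\,d\vu$. The paper cites \cite[Proposition 3.11]{HLM2017} for that last step rather than the $q=1$ case of its congruence moment formula, but this is the same ingredient, and your convergence remarks (which Tonelli already covers, since everything is nonnegative) are harmless.
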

\begin{proof}
Let $\mathcal F$ be a fundamental domain of $\UL_d(\ZZ_S)\setminus \UL_d(\QQ_S)$. Then a fundamental domain of $\AUL_d(\ZZ_S)\setminus \AUL_d(\QQ_S)$ is $\bigcup_{\sg \in \mathcal F} \mathcal T\sg$, where $\mathcal T=[0,1)^d\times \prod_{p\in S_f} \ZZ_p^d$ is a fundamental domain of $\ZZ_S^d\setminus \QQ_S^d$.

\vspace{0.1in}
\noindent (a) By the change of variables $\vw=\vv\sg^{-1}$ and letting $f_\sg(\vv):=f(\vv\sg)$ for each $\sg \in \mathcal F$, we have
\[\begin{split}
&\int_{\AUL_d(\ZZ_S)\setminus \AUL_d(\QQ_S)} \widehat{f}\: d\widetilde{\mu}_Sd\vv
=\int_{\mathcal F}\int_{\mathcal T\sg} 
\sum_{\vk\in \ZZ_S^d} f(\vk\sg+ \vv)\: d\vv d\widetilde{\mu}_S(\sg)\\
&=\int_{\mathcal F}\int_{\mathcal T}
\sum_{\vk\in \ZZ_S^d} f((\vk+\vw)\sg) d\vw d\widetilde{\mu}_S(\sg)
=\int_{\mathcal F}\int_{\mathcal T}
\sum_{\vk\in \ZZ_S^d} f_\sg(\vk+\vw) d\vw d\widetilde{\mu}_S(\sg)\\
&=\int_{\mathcal F}\int_{\QQ_S^d} f_\sg(\vw) d\vw d\widetilde{\mu}_S(\sg)
=\int_{\mathcal F}\int_{\QQ_S^d} f \:d\vw d\widetilde{\mu}_S(\sg)
=\int_{\QQ_S^d} f \:d\vw.
\end{split}\]

\vspace{0.1in}
\noindent (b) Similarly, by the change of variables,
\[\begin{split}
&\int_{\mathcal F}\int_{\mathcal T\sg} \sum_{\vk_1, \vk_2\in \ZZ_S^d}
F(\vk_1\sg+\vv, \vk_2\sg+\vv) \:d\vv d\widetilde{\mu}_S(\sg)\\
&\hspace{1in}=\int_{\mathcal F}\int_{\mathcal T} \sum_{\vk_1, \vk_2\in \ZZ_S^d}
F\left((\vk_1+\vw)\sg, (\vk_2+\vw)\sg\right)\:d\vw d\widetilde{\mu}_S(\sg).
\end{split}\]
Let $\vw'=\vk_1+\vw$ and $\vk_3=\vk_2-\vk_1$. Then the above integral is
\[\begin{split}
&=\int_{\mathcal F}\left(\sum_{\vk_1\in \ZZ_S^d}\int_{\mathcal T+\vk_1}\right)
\sum_{\vk_3\in \ZZ_S^d} F(\vw'\sg, \vw'\sg+ \vk_3\sg) \:d\vw' d\widetilde{\mu}_S(\sg)\\
&=\int_{\mathcal F} \sum_{\vk_3\in \ZZ_S^d} \int_{\QQ_S^d}
F(\vx, \vx+\vk_3\sg) \:d\vx d\widetilde{\mu}_S(\sg)\\
&=\int_{\QQ_S^d} F(\vx, \vx) d\vx 
+ \int_{\QQ_S^d}\int_{\QQ_S^d} F(\vx, \vy)\: d\vx d\vy,
\end{split}\]
where the last equality follows from the Siegel integral formula for $\UL_d(\ZZ_S)\setminus \UL_d(\QQ_S)$ (\cite[Proposition 3.11]{HLM2017}) with the function $\vy \in \QQ_S^d \mapsto \int_{\QQ_S^d} F(\vx, \vy) d\vx$.
\end{proof}

We remark that the proof of Theorem~\ref{moment formulae: inhomogeneous} (b) is a generalization of the proof of Proposition 14 in \cite{EMV2015}, where they considered the case when $\QQ_S=\RR$ and $d=2$.

By Theorem~\ref{moment formulae: inhomogeneous} and Chebyshev's inequality, we obtain the theorem below immediately.

\begin{theorem}\label{variance property: inhomogeneous}
Let $d\ge 2$. Let $\mathcal F_{\mathrm{aff}}$ be a fundamental domain for $\AUL_d(\ZZ_S)\setminus \AUL_d(\QQ_S)$.
For a bounded borel set $A\subseteq \QQ_S^d$, 
\[\begin{split}
&\widetilde{\mu}_S\times\vol\left(\left\{(\vv,\sg) \in \mathcal F_{\mathrm{aff}}
: \left| \#\left((\ZZ_S^d\sg+\vv) \cap A\right) -\vol(A) \right| > M
\right\}\right)\\
&\hspace{0.4in} < \frac {\vol(A)} {M^2}.
\end{split}\]
\end{theorem}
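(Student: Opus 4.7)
The plan is to recognize this as a standard second-moment (Chebyshev) argument applied to the affine Siegel transform of the indicator function $\ind_A$, using Theorem~\ref{moment formulae: inhomogeneous} as the black box providing the mean and the variance.

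First, I would set $X(\vv,\sg) := \widehat{\ind_A}(\vv,\sg) = \#\bigl((\ZZ_S^d\sg+\vv)\cap A\bigr)$, viewed as a measurable function on the probability space $\bigl(\mathcal F_{\mathrm{aff}},\,\widetilde{\mu}_S\times\vol\bigr)$. Note that since $\widetilde{\mu}_S$ is normalized so that $\widetilde{\mu}_S(\UL_d(\ZZ_S)\setminus\UL_d(\QQ_S))=1$ and $\vol(\ZZ_S^d\setminus\QQ_S^d)=1$ (taking the translation coordinate in the unit torus $[0,1)^d\times\prod_{p\in S_f}\ZZ_p^d$), the total mass on $\mathcal F_{\mathrm{aff}}$ is one, so we really do have a probability space.

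Next, Theorem~\ref{moment formulae: inhomogeneous}(a) applied to $f=\ind_A$ yields the first moment
\[
\EE[X] \;=\; \int_{\mathcal F_{\mathrm{aff}}} \widehat{\ind_A}\,d\widetilde{\mu}_S d\vv \;=\; \int_{\QQ_S^d}\ind_A\,d\vv \;=\; \vol(A),
\]
while Theorem~\ref{moment formulae: inhomogeneous}(b) applied with $F(\vx_1,\vx_2)=\ind_A(\vx_1)\ind_A(\vx_2)$ gives the second moment
\[
\EE[X^2] \;=\; \int_{\mathcal F_{\mathrm{aff}}}\widehat{\ind_A}^{\,2}\,d\widetilde{\mu}_S d\vv \;=\; \vol(A)^2 + \vol(A).
\]
Subtracting, the variance is $\Var(X) = \vol(A)$.

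Finally, Chebyshev's inequality on this probability space gives
\[
(\widetilde{\mu}_S\times\vol)\bigl(\{(\vv,\sg)\in\mathcal F_{\mathrm{aff}} : |X(\vv,\sg)-\vol(A)|>M\}\bigr) \;\le\; \frac{\Var(X)}{M^2} \;=\; \frac{\vol(A)}{M^2},
\]
which is exactly the claimed bound. There is essentially no obstacle here — all the real work was done in Theorem~\ref{moment formulae: inhomogeneous}. The only bookkeeping point worth double-checking is the normalization: one must verify that $\mathcal F_{\mathrm{aff}}$ has total $(\widetilde{\mu}_S\times\vol)$-measure equal to one under the conventions of the paper, which follows from the description of $\mathcal F_{\mathrm{aff}}$ as $\bigcup_{\sg\in\mathcal F}\mathcal T\sg$ used in the proof of Theorem~\ref{moment formulae: inhomogeneous}, so that Chebyshev's inequality applies directly without any extra normalizing constant.
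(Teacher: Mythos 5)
Your proposal is correct and is exactly the paper's argument: the paper deduces this theorem "immediately" from Theorem~\ref{moment formulae: inhomogeneous} together with Chebyshev's inequality on the probability space $(\mathcal F_{\mathrm{aff}},\widetilde{\mu}_S\times\vol)$, which is precisely the mean/variance computation you carried out. The only cosmetic discrepancy is that Chebyshev yields $\le$ while the statement is written with a strict inequality, a harmless point shared with Theorem~\ref{variance property}.
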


\begin{proof}[Proof of Theorem~\ref{inhomogeneous case}]
Let $\mathcal F=\mathcal F_{\infty}\times \prod_{p\in S_f} \UL_d(\ZZ_p)$ be a fundamental domain of $\UL_d(\ZZ_S)\setminus \UL_d(\QQ_S)$, where $\mathcal F_{\infty}$ is a fundamental domain for $\SL_d(\ZZ)\setminus \SL_d(\RR)$.
Take a fundamental domain $\mathcal F_{\mathrm{aff}}$ of $\AUL_d(\ZZ_S)\setminus \AUL_d(\QQ_S)$ by $\mathcal F_{\mathrm{aff}}=\bigcup_{\sg\in \mathcal F} \mathcal T\sg \times\{\sg\}$, where $\mathcal T=[0,1)^d\times \prod_{p\in S_f} \ZZ_p^d$. 

Let $\mathcal K$ be a compact subset of $\mathcal F_{\mathrm{aff}}$. For $\J\in \NN \times \prod_{p\in S_f} p^{\NN}$, define
\[
\mathcal C_{\J}
=\left\{
(\xi, \sg)\in \mathcal K: \begin{array}{c}
\hspace{-0.4in}D\left(\ZZ_S^d, A_{(\q{\sg}{0})_\xi,\T}\right)>\vol(\I_\T)
\prod_{p\in S} T_p^{d-2-\delta_p}\\[0.05in]
\hspace{1.2in}\text{for some }\T \in [j_\infty, j_\infty+1)\times \J^f\end{array}
\right\}.
\]

As in the proof of Theorem~\ref{congruence case}, we will show that $\vol\times\widetilde{\mu}_S\left(\limsup_{\J}\mathcal C_\J\right)=0$ for an appropriate $(\delta_p)_{p\in S_f}$.

For $(\eta, \sh)$, $(\xi, \sg)$ in $\mathcal F_{\mathrm{aff}}$, $\I\in \QQ_S$ and $\T=(T_p)_{p\in S}$,
\[\begin{split}
&\vv \in \ZZ_S^d: \;
\q{\sh\sg}{0}:\; \q{(\eta,\sh)(\xi,\sg)}{0}(\vv)\in \I, \;\|\vv\|_p<T_p\;(p\in S)\\
&\Rightarrow \vw\in \ZZ_S^d\sg+\xi:\;
\q{(\eta,\sh)}{0}:\; \q{(\eta,\sh)}{0}(\vw)\in \I, \;\|\vw\|_p<\|(\xi_p,g_p)\|_{op}T_p\;(p\in S).
\end{split}\]
Note that for $\sg=(g_p)_{p\in S}\in \mathcal F$, $\ZZ_p^d g_p=\ZZ_p^d$ ($p\in S_f$) so that for $(\xi, \sg)\in \mathcal F_{\mathrm{aff}}$,
\[
\|\vw\|_p=\|\vv g_p + \xi_p\|_p \le \max\left\{\|\vv\|_p, \|\xi_p\|_p \right\}
\]
and if $\|\vv\|_p=T_p > 1$, $\|\vv\|_p=\|\vw\|_p$ for $p\in S_f$.

For each $\J$, as in the proof of Theorem~\ref{congruence case}, 
set $\varepsilon_1=j_\infty^{-\alpha_\infty}\prod_{p\in S_f}p^{-\alpha_p j_p}$ and $\varepsilon_2=j_\infty^{-\beta_\infty}\prod_{p\in S_f}^{-\beta_p j_p}$, which are the scales of the space-approximation and the radius-approximation, respectively.

Observe that there is $C=C(\mathcal K)>0$ such that for any $\J$, one can find a subset $\mathcal Q=\mathcal Q(\mathcal K, \J)$ of $\mathcal K$ for which
\begin{enumerate}[(i)]
\item $\mathcal K \subseteq \bigcup_{(\eta,\sh)\in \mathcal Q} (\eta,\sh).\mathcal B(\varepsilon_1)$, where
\[\begin{split}
&\mathcal B(\varepsilon_1)=\\
&\left\{
(\xi_\infty, g_\infty) \in \RR^d\rtimes \SL_d(\RR) : \|(\xi_\infty, g_\infty)\|_{op}\le 1+\varepsilon_1
\right\}\times \prod_{p\in S_f} (\ZZ_p^d \rtimes \UL_d(\ZZ_p));
\end{split}\]
\item $\# \mathcal Q(\mathcal K, \J) < C(\mathcal K)\varepsilon^{-\frac 1 2 (d+2)(d-1)-d},$
\end{enumerate}
where $\frac 1 2 (d+2)(d-1)+d$ is the codimension of $\{\origin\}\times \SO(d)$ in $\SL_d(\RR)$.

Then one can show that $\sum_{\J\succ \J_0} \vol\times \widetilde{\mu}_S(\mathcal C_\J)<\infty$ for sufficiently large $\J_0$ provided that
\[\begin{split}
&\left\{\begin{array}{l}
0<\delta_\infty < \alpha_\infty,\\
\delta_\infty < \beta_\infty +1,\\
-(d-2-\kappa_\infty)-2\delta_\infty+\alpha_\infty\left(\frac 1 2 (d+2)(d-1)+d\right)+\beta_\infty < -1;
\end{array}
\right.\\[0.1in]
&\left\{\begin{array}{l}
0<\delta_p < \alpha_p,\\
\delta_p < \beta_p,\\
-(d-2-\kappa_p)-2\delta_p+\alpha_p\left(\frac 1 2 (d+2)(d-1)+d\right)+\beta_p < 0
\end{array}
\right.
\end{split}\]
and the theorem follows from Borel-Cantelli lemma.
Note that the range of such $\delta_p$ are $\left(0, \frac {d-2-\kappa_p} {(d+2)(d-1)/2+d-1}\right)$. 

\end{proof}


\begin{thebibliography}{99}
\bibitem{AGY20} Mahbub Alam, Anish Ghosh and Shucheng Yu, \emph{Quantitative Diophantine approximation with congruence conditions}, Journal de Th\'{e}orie des Nombres de Bordeaux, Tome 33 (2021) no. 1, pp. 261--271.

\bibitem{A15} J. S. Athreya, \emph{Random affine lattices}, Geometry, groups and dynamics, 169--174, Contemp. Math., 639, Amer. Math. Soc., Providence, RI, 2015.

\bibitem{AM} J. S. Athreya and G. Margulis, \emph{Values of random polynomials at integer points}, (English summary) 
J. Mod. Dyn. 12 (2018), 9--16. 

\bibitem{BGH} P. Bandi, A. Ghosh and J. Han, \emph{A generic effective Oppenheim theorem for systems of forms} Journal of Number Theory, 218 (2021), 311--333.

\bibitem{BG} P. Bandi and A. Ghosh, \emph{Small solutions of quadratic forms with congruence conditions}, https://arxiv.org/abs/2008.08568.

\bibitem{BP} A.Borel and G. Prasad, \emph{Values of isotropic quadratic forms at $S$-integral points.} Compositio Math. 83(1992), no.3, 347--372.
%
\bibitem{Bour} J. Bourgain, \emph{A quantitative Oppenheim theorem for generic diagonal quadratic forms}, (English summary) 
Israel J. Math. 215 (2016), no. 1, 503--512. 

\bibitem{DM} S. Dani and G. Margulis, \emph{Limit distributions of orbits of unipotent flows and values of quadratic forms}, I. M. Gel'fand Seminar, 91-137, Adv. Soviet Math., 16, Part 1, Amer. Math. Soc., Providence, RI, 1993.

\bibitem{EMV2015} D. El-Baz, J. Marklof and I. Vinogradov,
\emph{The distribution of directions in an affine lattice: two-point correlations and mixed moments},
Int. Math. Res. Not. IMRN 2015, no. 5, 1371--1400.


\bibitem{EMM} A. Eskin, G. Margulis and S. Mozes, \emph{Upper bounds and asymptotics in a quantitative version of the Oppenheim conjecture},  Ann. of Math. (2) 147 (1998), no. 1, 93--141.

\bibitem{EMM2} A. Eskin, G. Margulis and S. Mozes, \emph{Quadratic forms of signature $(2,2)$ and eigenvalue spacings on rectangular $2$-tori.} Ann. of Math. (2) 161(2005), no. 2, 679--725.

\bibitem{GK} A. Ghosh and D. Kelmer, \emph{A quantitative Oppenheim theorem for generic ternary quadratic forms}, J. Mod. Dyn. 12 (2018), 1--8.

\bibitem{GGN} A. Ghosh, A. Gorodnik and A. Nevo, \emph{Optimal Density for values of generic polynomial maps} American Journal of Mathematics Volume 142, Number 6, December 2020, 1945--1979. 

\bibitem{GKY2020} A. Ghosh, D. Kelmer and S. Yu, \emph{ Effective density for inhomogeneous quadratic forms I: generic forms and fixed shifts}, International Mathematics Research Notices, doi:10.1093/imrn/rnaa206.

\bibitem{GKY2020b} A. Ghosh, D. Kelmer and S. Yu, \emph{Effective density for inhomogeneous quadratic forms II: fixed forms and generic shifts}, arxiv.org/abs/2001.10990.
%
\bibitem{HLM2017} J. Han, S. Lim and K. Mallahi-Karai, \emph{Asymptotic distribution of values of isotropic quadratic forms at $S$-integral points,} 
J. Mod. Dyn. 11 (2017), 501--550.
%
\bibitem{Han2021} J. Han, \emph{Rogers' mean value theorem for $S$-arithmetic Siegel transform and applications to the geometry of numbers}, arXiv:1910.01824. 

\bibitem{Han2020} J. Han, \emph{Quantitative oppenheim conjecture for $S$-arithmetic quadratic forms of rank $3$ and $4$}, Discrete \& Continuous Dynamical Systems, 2021, 41 (5) : 2205--2225.
%
\bibitem{KY2018} D. Kelmer and S. Yu, \emph{Values of random polynomials in shrinking targets}, Trans. Amer. Math. Soc. 373 (2020), 8677--8695.
%
\bibitem{KS19} D. Kleinbock and M. Skenderi, \emph{Khintchine-type theorems for values of subhomogeneous functions at integer points},  
Monatsh. Math. (2021), DOI 10.1007/s00605-020-01498-1.

\bibitem{LM14} E. Lindenstrauss and G. A. Margulis, \emph{Effective estimates on indefinite ternary forms}, Israel J. Math. 203 (2014), no. 1, 445--499.

\bibitem{Mar} G. A. Margulis, \emph{Formes quadratriques ind\'{e}finies et flots unipotents sur les espaces homog\'{e}nes.} (French summary), C. R. Acad. Sci. Paris. S\'{e}r. I Math. 304 (1987), no. 10. 249-253.


\bibitem{Margulis} G. A. Margulis, \emph{Oppenheim conjecture}, Fields Medallists' lectures, 272--327, World Sci. Ser. 20th Century Math., 5, World Sci. Publ., River Edge, NJ, 1997.

\bibitem{MaMo} G. A. Margulis and A. Mohammadi, \emph{Quantitative version of the Oppenheim conjecture for inhomogeneous quadratic forms}, Duke Math. J., 158(1) (2011):121--160.

\bibitem{Mar2002} J. Marklof, \emph{Pair correlation densities of inhomogeneous quadratic forms II}, Duke Math. J.
115, no. 3 (2002): 409--34.

\bibitem{Mar2003} J. Marklof, \emph{Pair correlation densities of inhomogeneous quadratic forms}, Ann. of Math. (2)
158, no. 3 (2003): 419--71.
 
\bibitem{Mar2000}
J. Marklof,
\emph{The $n$-point correlations between values of a linear form},
Ergodic Theory Dynam. Systems 20 (2000), 1127--1172.
%
\bibitem{MS2010}
J. Marklof, A. Str$\ddot o$mbergsson,
\emph{The distribution of free path lengths in the periodic Lorentz gas and related lattice point problems,}
Ann. of Math. (2) 172 (2010), no. 3, 1949--2033.
%
\bibitem{Pra1989}
G. Prasad,
\emph{Volumes of $S$-arithmetic quotients of semi-simple groups,}
With an appendix by Moshe Jarden and the author.
Inst. Hautes $\acute{E}$tudes Sci. Publ. Math. No. 69 (1989), 91--117.
%
\bibitem{Sch1971}
W. M. Schmidt, 
\emph{Approximation to algebraic numbers,} 
Enseignement Math. (2) 17 (1971), 187--253.
%
\bibitem{Sie1989}
C. L. Siegel,
\emph{Lectures on the geometry of numbers,}
Springer-Verlag Berlin Heidelberg GmbH, 1989. x+160 pp.
%
\bibitem{Sim1971}
G. Shimura, 
\emph{Introduction to the Arithmetic Theory of Automorphic functions,}
Publications of the Mathematical Society of Japan, No. 11, Iwanami Shoten, Publishers, Tokyo, 1971.
\end{thebibliography}
\end{document}